\documentclass[11pt, oneside]{article}   	
\usepackage{geometry}                		
\geometry{letterpaper}                   		
\usepackage{graphicx}				
\usepackage{amssymb,amsmath,amsfonts,bbm}
\usepackage{amsthm, thmtools,theoremref}
\usepackage[utf8]{inputenc}
\usepackage{xcolor}
\usepackage{parskip}
\usepackage{bbm}
\usepackage{verbatim}
\usepackage{hyperref}
\usepackage{csquotes}

\usepackage{appendix}

\usepackage{natbib}
\bibliographystyle{abbrvnat}

\DeclareMathOperator*{\esssup}{ess\,sup}

\setlength\parindent{0pt}
\usepackage{fancyhdr}
\pagestyle{fancy}
\fancyhf{}

\fancyfoot[C]{\thepage}

\allowdisplaybreaks

\theoremstyle{plain}
\newtheorem{thm}{Theorem}
\newtheorem{lem}[thm]{Lemma}
\newtheorem{prop}[thm]{Proposition}
\newtheorem{cor}[thm]{Corollary}

\theoremstyle{definition}
\newtheorem{defn}[thm]{Definition}
\newtheorem{exmp}[thm]{Example}
\newtheorem{assume}[thm]{Assumption}

\theoremstyle{remark}
\newtheorem{rem}[thm]{Remark}

\title{Utility maximization with ratchet and drawdown constraints on consumption in incomplete semimartingale markets}
\author{Anastasiya Tanana\thanks{Department of Mathematics, The University of Texas at Austin, \textit{atanana@utexas.edu}.\newline \textit{MSC2020 subject classifications:} Primary 93E20; secondary 91G80, 91B08. \newline \textit{Keywords and phrases:} utility maximization, convex duality, ratcheting of consumption, drawdown constraint on consumption, running maximum, incomplete markets.}}
\date{July 13, 2022}							

\begin{document}
\maketitle

\begin{abstract}{\footnotesize
In this paper, we study expected utility maximization under ratchet and drawdown constraints on consumption in a general incomplete semimartingale market using duality methods. The optimization is considered with respect to two parameters: the initial wealth and the essential lower bound on consumption process. In order to state the problem and define the primal domains, we introduce a natural extension of the notion of running maximum to arbitrary non-negative optional processes and study its properties. The dual domains for optimization are characterized in terms of solidity with respect to an ordering that is introduced on the set of non-negative optional processes. The abstract duality result we obtain for the optimization problem is used in order to derive a more detailed characterization of solutions in the complete market case.}
\end{abstract}


\section{Introduction}\label{sec:intro}

The first mathematical study of optimal investment and consumption of an agent with intolerance for any decline in the standard of living appears in \cite{dybvig}. The constraint that the consumption process is non-decreasing, also called \textit{ratcheting of consumption}, can be seen as an extreme form of habit formation.

{\footnotesize
\begin{displayquote}
``The model in this paper is close to modern models of habit formation such as those of Constantinides (1990), Detemple and Zapareto (1991), Ingersoll (1992), Shrikhande (1992), or Sunderesan (1989). The main difference is in the rapidity (immediate) of the habit formation and the severity (lexicographic) of the agent's preferences for maintaining a new standard of living." \hfill \citet[p. 289]{dybvig}
\end{displayquote}}
Dybvig suggests that this model might be suitable for situations where consumption involves long-term commitments.

{\footnotesize
\begin{displayquote}
``For example, it may be a good model of a university or foundation that at least some of the expenditures cannot be decreased quickly, due to implicit and explicit long-term commitments to faculty and due to the commitments to donors about the use of buildings or equipment."

\hfill\citet[p. 288]{dybvig}
\end{displayquote}}
By several changes of variables, a constraint that consumption can fall no faster than at some given rate (i.e., the constraint of the type $c_t\geq e^{-\int_s^t\alpha_udu}c_s$ for $t\geq s\geq 0$, where $\alpha_\cdot$ is given) can be transformed into ratcheting constraint (see Section 2 in \cite{dybvig} for the case when $\alpha_\cdot$ is constant), thus generalizing Dybvig's model to a less rigid form of habit formation.

\cite{dybvig} finds the optimal investment and consumption strategies for an infinitely-lived ratchet investor with CRRA utility function in a market with underlying risky asset following the Geometric Brownian Motion (GBM) by considering the corresponding Hamilton--Jacobi--Bellman (HJB) equation. \cite{riedel} shows that, in a complete market with pricing kernel driven by a L\'evy process, an optimal consumption plan of an infinitely-lived ratchet investor with an arbitrary utility function is in fact equal to the running maximum of the optimal consumption plan of an unconstrained investor with the same utility function. Riedel's proof is elementary in the sense that, essentially, it relies on a concavity argument and integrations by parts. \cite{Koo:2012aa} generalize Dybvig's portfolio selection result under the same assumptions on the market to an arbitrary utility function by using duality and the Feynman-Kac formula. \cite{Watson-Scott} extend Riedel's result to finite time horizon by introducing a deterministic function of time, called a coupling curve, that reflects the effects of finiteness of horizon in optimization and is described as a free boundary of a certain free-boundary problem. \cite{Jeon-Koo-Shin2018} study the finite horizon GMB market case with general utility function by transforming it into an infinite family of optimal stopping problems.

The working paper of \cite{Arun2012} is the first to study optimal investment and consumption problem under a \textit{drawdown constraint on consumption}. Under this condition, the consumption is not allowed to fall below a fixed proportion $\lambda\in[0,1]$ of the running maximum of past consumption. In particular, $\lambda=0$ corresponds to the unconstrained problem and $\lambda=1$ corresponds to consumption ratcheting. \cite{Arun2012} finds the optimal portfolio and consumption in a GBM market over infinite time horizon for an agent with CRRA utility function by writing down the HJB equation and using duality and a modification of Dybvig's verification argument. \cite{Jeon:2021aa} extend Arun's result to a general class of utility functions by deriving a dual problem consisting of only the choice of an optimal (non-decreasing) maximum consumption process, converting it into an infinite two-dimensional family of optimal stopping problems, and characterizing the solutions to the latter by a family of free boundaries depending on the state variable of the maximum process.

The current paper studies both the ratchet constraint and the drawdown constraint on consumption in general incomplete semimartingale markets by duality methods. Clearly, the ratchet constraint is a special case of the drawdown constraint, but in this paper the case $\lambda=1$ is thought of as the basic one and the solution for $\lambda\in(0,1)$ is derived by, loosely speaking, an interpolation between the ratchet investor problem with $\lambda=1$ and the unconstrained problem with $\lambda=0$.

Our market model is taken from \cite{mostovyi} with an additional assumption that the stochastic clock is equivalent to the Lebesgue measure on $[0,\hat{T})$, where $\hat{T}$ is either finite or infinite time horizon. \cite{mostovyi} provides a simple necessary and sufficient condition, namely, finiteness of the primal and dual value functions, for the key assertions of utility maximization theory to hold in an incomplete semimartingale market model with intermediate consumption, stochastic clock, and utility stochastic field. This approach parallels the work of \cite{Kramkov:2003aa} on maximization of the expected value of deterministic utility from terminal wealth. A different but common approach to establish duality for expected utility maximization with time- and/or scenario-dependent utility is to require the utility to satisfy additional assumptions, including uniform reasonable asymptotic elasticity of \cite{Karatzas-Zitkovic2003} (see also \cite{Zitkovic2005}, \cite{BK}, \cite{yu}), a generalization of the reasonable asymptotic elasticity condition of \cite{Kramkov:1999aa} for deterministic utility applied to terminal wealth.

The main difficulty in applying Mostovyi's result for the ratchet/drawdown constraint is identifying the appropriate primal and dual domains. We handle this issue by introducing a natural extension of the notion of the running maximum to arbitrary non-negative optional processes, which we call a \textit{running essential supremum}, and defining the primal domain as the solid hull of all consumption plans satisfying the ratchet/drawdown constraint (formulated in terms running essential supremum) together with the budget constraint. The dual domains, defined as the polar sets of the primal domains in the sense of \cite{bipolar}, are characterized via a family of orderings we introduce on the set of non-negative optional processes. The corresponding ordering for the ratchet constraint, which we call \textit{chronological ordering}, implicitly appears in convex duality method for optimization over the set of increasing processes described in \cite{BK}, however, the approach of the current paper (i) seems to be more direct, (ii) allows for generalization to the drawdown constraint with $\lambda\in(0,1)$, (iii) allows to add another parameter to optimization, an essential lower bound on the consumption process. This parameter is omnipresent in the literature on optimization under the ratchet/drawdown constraint, since, if the market is Markovian, adding this second parameter turns the problem into Markovian: all the information about the past consumption that is necessary for the future optimization is contained in the current running maximum which serves (up to multiplication by $\lambda$) as a lower bound for the future consumption. Based on \cite{mostovyi}, we derive a duality result for the \textit{two-parameter} optimization problem, where the parameters are the initial wealth and the essential lower bound on consumption. In this regard, the formulation and derivation of our main optimization result (Theorem \ref{thm:main-duality}) is similar to the derivation of \cite{hug-kramkov} from the results of \cite{Kramkov:1999aa,Kramkov:2003aa}, and to the work of \cite{yu}.

In a complete market, where the set of equivalent martingale deflators is a singleton, the characterization of dual domains simplifies (Proposition \ref{prop:complete-case-D}) and allows for a more detailed description of the structure of optimizers. It turns out that in the drawdown constraint case the optimizers exhibit tree possible types of behavior: the agent consumes either at the minimal level currently allowed by the drawdown constraint, or at the current running essential supremum level, or as an unconstrained agent (i.e., not restricted by the drawdown constraint) with the same utility function but a different initial wealth. In the case of ratchet constraint, the description of optimizers is linked to the Bank--El Karoui Representation Theorem for stochastic processes and to the related notion of envelope process introduced in \cite{BK}. As a special case, we derive from Corollary~\ref{cor:complete-env} Riedel's formula for the optimal consumption plans in a complete L\'evy market model with exponential time preferences and infinite time horizon.

The paper is organized as follows. In Section \ref{sec:math-framework}, we describe the market model, define the domains for consumption processes, and formulate the utility maximization problem. In Section \ref{sec:run-esssup}, we define the running essential supremum process and study its properties. Section \ref{sec:domains} describes the structure of the Brannath--Schachermayer polar sets of the consumption domains. Section \ref{sec:optimization} introduces the two-parameter families of primal and dual domains and proves the main optimization result, Theorem \ref{thm:main-duality}. In Section \ref{sec:complete}, we derive more specific results for the complete market case. In the \hyperref[app:envelope]{Appendix}, we give a modification of a lemma by \cite{BK} concerning existence and uniqueness of the envelope process allowing us to say more about the structure of optimizers for the ratchet constraint in a complete market and, in fact, give an alternative solution (Proposition \ref{prop:alternative-sol}), not relying on the main duality result, for this case.

\section{Mathematical framework}\label{sec:math-framework}


\subsection{The market model}

Let $\hat{T}\in(0,\infty]$ be a deterministic time horizon and denote $\mathcal{I}:=[0,\hat{T}]$ if $\hat{T}<\infty$ and $\mathcal{I}:=[0,\infty)$ if $\hat{T}=\infty$. We consider a market consisting of one num\'eraire asset (for example, a savings account) and $d$ risky assets with discounted price process $S=(S)_{t\in\mathcal{I}}$ given by an $\mathbb{R}^d$-valued semimartingale on a filtered probability space $(\Omega,\mathcal{F},(\mathcal{F}_t)_{t\in\mathcal{I}},\mathbb{P})$, where the filtration $(\mathcal{F}_t)_{t\in\mathcal{I}}$ satisfies the usual conditions. We denote the corresponding optional $\sigma$-algebra on $\Omega\times[0,\hat{T})$ by $\mathcal{O}$.

We fix a stochastic clock $\kappa=(\kappa_t)_{t\in[0,\hat{T}]}$ representing the notion of time according to which consumption is assumed to occur.
\begin{assume}\label{ass:clock}
There is a \textit{strictly positive}, adapted process $\dot{\kappa}_s:\mathcal{I}\to(0,\infty)$ such that
$\kappa_t(\omega)=\int_0^t \dot{\kappa}_s(\omega)ds$ for all $(\omega,t)\in\Omega\times[0,\hat{T}]$.
Additionally, $\kappa_t$ is uniformly bounded: $\kappa_{\hat{T}}(\omega)\leq A$ for a finite constant $A$ and for all $\omega\in\Omega$.
\end{assume}
Two examples of stochastic clocks satisfying these conditions are the exponential clock $\kappa(t)=\frac{1}{\nu}(1-e^{-\nu t})$, $t\geq 0$, for some $\nu>0$ and for any $\hat{T}\in(0,\infty]$, and the Lebesgue clock $\kappa(t)=t$ for $\hat{T}<\infty$. Assumption~\ref{ass:clock} implies, in particular, that the measures $\mathbb{P}\times d\kappa$ and $\mathbb{P}\times \text{Leb}([0,\hat{T}))$ on $\Omega\times[0,\hat{T})$ are equivalent.

A portfolio is defined as a triplet $\Pi=(x,H,c)$ of a constant initial wealth $x$, a predictable $\mathbb{R}^d$-valued $S$-integrable process $H=(H_t)_{t\in\mathcal{I}}$ of risky asset quantities, and a non-negative finite-valued optional process $c:\Omega\times[0,\hat{T})\to[0,\infty)$ representing the consumption rate relative to $\kappa$.
The discounted value process $V=(V_t)_{t\in\mathcal{I}}$ of portfolio $\Pi$ is
\begin{equation}\label{eq:wealth-process}
V_t:=x+\int_0^t H_sdS_s-\int_0^tc_sd\kappa_s,\quad t\in\mathcal{I}.
\end{equation}
For $x>0$, a consumption plan $c$ is called \textit{$x$-admissible} if there exists a predictable $\mathbb{R}^d$-valued $S$-integrable process $H$ such that the value process $V$ of portfolio $(x,H,c)$ is non-negative.

\begin{rem}\label{rem:on-model}
\begin{enumerate}
\item[(i)] Our market model is essentially the model of \cite{mostovyi} but with additional assumptions on the stochastic clock $\kappa$: it is absolutely continuous with respect to the Lebesgue measure and has support of the form $\mathcal{I}=[0,\hat{T}]\cap[0,\infty)$. Clearly, if $\hat{T}<\infty$ then we can embed our model into the infinite horizon Mostovyi's model by assuming that $S_t=S_{\hat{T}}$, $\kappa_t=\kappa_{\hat{T}}$, $\mathcal{F}_t=\mathcal{F}_{\hat{T}}$ for all $t\in(\hat{T},\infty)$. This allows us to freely apply Mostovyi's duality results in what follows. The reason why we separate the finite and infinite time horizon cases is because an $x$-admissible consumption plan $c$ satisfying the drawdown constraint on $[0,\hat{T}]$ for $\hat{T}<\infty$ generally cannot be extended to an $x$-admissible consumption plan on $[0,\infty)$ still satisfying the drawdown constraint on $[0,\infty)$: if we extend $c$ by taking $c_t=0$ for $t\geq \hat{T}$, the drawdown constraint fails after time $\hat{T}$. Hence, the drawdown constraints on consumption throughout $[0,\infty)$ and on $[0,\hat{T}]$ for $\hat{T}<\infty$ are two different (though similar) conditions, and we have to separate their treatment. The results of this paper remain valid if we assume $\hat{T}$ to be a $[0,\infty]$-valued stopping time with respect to a filtration $(\mathcal{F}_{t})_{t\in[0,\infty)}$ satisfying the usual conditions (with the exception of supplementary results in the \hyperref[app:envelope]{Appendix}, which rely on the Representation Theorem of \cite{bank-el-karoui} requiring the stopping time $\hat{T}$ to be predictable, according to their Remark 2.1). In this case, $\mathcal{I}$ becomes a stochastic interval and we assume all processes after time $\hat{T}$ to be equal to their value at time $\hat{T}$ when $\hat{T}$ is finite.
\item[(ii)] Since we work in discounted units, $V_t$ in \eqref{eq:wealth-process} is given in the number of num\'eraire assets, and so is the cumulative consumption process $C_t:=\int_0^t c_s d\kappa_s$, $t\in\mathcal{I}$. The initial model of \cite{mostovyi} allows the flexibility of changing $\dot\kappa_t$ to $\dot\kappa_t/n_t$ and $c_t$ to $c_tn_t$, where $n_t$ is a strictly positive optional process, so that $C_t$ remains the same and the uniform boundedness in Assumption \ref{ass:clock} is satisfied for the new stochastic clock with density $\dot\kappa_tn_t$. In the utility maximization problem, \eqref{eq:primal-problem} below, optimizing over $c_t$ can be reformulated as optimizing over $c_tn_t$ by a simple change of utility function $U$, as described in \cite[Remark 2.2]{mostovyi}. However, in our case, this flexibility is no more useful since $c_t$ is the consumption rate on which we want to impose the ratchet/drawdown constraint and hence is uniquely determined: multiplication by time-dependent process $n_t$ does not preserve this type of constraint. Thus, if we are imposing a ratchet/drawdown constraint on consumption rate measured in currency per unit time (with respect to the usual Lebesgue clock $dt$), $\dot\kappa_t$ is uniquely determined from \eqref{eq:wealth-process} through the num\'eraire asset: if the num\'eraire asset is $N_t:=N_0\exp\left(\int_0^t r_sds\right)$, where  $N_0>0$ and an optional process $r_t$ represents an interest rate, then the appropriate stochastic clock density is given by $\dot\kappa_t=1/N_t$ and Assumption \ref{ass:clock} must be satisfied for this clock. On the other hand, if we are imposing a ratchet/drawdown constraint on discounted consumption rate (i.e., measured in number of num\'eraire assets per unit time with respect to $dt$) then the appropriate stochastic clock density is $\dot\kappa_t\equiv1$. In particular, this stochastic clock satisfies Assumption \ref{ass:clock} only if $\hat{T}$ is finite (or uniformly bounded in case of stopping time $\hat{T}$). If we want to impose a ratchet/drawdown constraint on consumption rate measured in a different asset, this can be done in a similar way as long as Assumption \ref{ass:clock} for the corresponding clock $\kappa$ is satisfied.
\end{enumerate}
\end{rem}

Next, we define the set of all non-negative value processes associated with portfolios of the form $\Pi=(1,H,0)$,
$$\mathcal{X}:=\left\{X\geq 0:\ X_t=1+\int_0^t H_sdS_s \text{ for } t\in\mathcal{I}\right\},$$
and the set of \textit{equivalent martingale deflators},
\begin{align*}
\mathcal{Z}:=\left\{\right.&\left.(Z_t)_{t\in\mathcal{I}}>0:\ Z \text{ is a c\`adl\`ag martingale such that } Z_0=1 \text{ and}\right.\\
&\left. XZ=(X_tZ_t)_{t\in\mathcal{I}} \text{ is a local martingale for every } X\in\mathcal{X}\right\}.
\end{align*}
We make the following assumption, which is related to the absence of arbitrage in the market.
\begin{assume}\label{ass:NA}$\mathcal{Z}\neq\emptyset$.\end{assume}
If $\hat{T}$ is finite and $S$ is locally bounded then $S$ is a local martingale under any probability measure $\mathbb{Q}\sim \mathbb{P}$ given by $\frac{d\mathbb{Q}}{d\mathbb{P}}=Z_{\hat{T}}$ with $Z\in\mathcal{Z}$. And vice versa, if $S$ is a local martingale under a probability measure $\mathbb{Q}\sim \mathbb{P}$ then $Z_t=\frac{d\mathbb{Q}}{d\mathbb{P}}\big\vert_{\mathcal{F}_{t}}$, $t\in\mathcal{I}$, belongs to $\mathcal{Z}$. Hence in this case, by \cite[Corollary 1.2]{delbaen_general_1994}, Assumption \ref{ass:NA} is equivalent to no free lunch with vanishing risk (NFLVR) condition on the market, a version of no arbitrage condition.

According to \cite{mostovyi}, Lemma 4.2, a consumption plan $c$ is $x$-admissible if and only if
\begin{equation}\label{eq:x-admissible}
\mathbb{E}\left[\int_0^{\hat{T}} c_tZ_td\kappa_t\right]\leq x,\quad \forall Z\in\mathcal{Z}.
\end{equation}
We adopt the following simplifying notation for non-negative optional processes $c$ and $\delta$,
 $$\langle c,\delta\rangle:=\mathbb{E}\left[\int_0^{\hat{T}} c_t\delta_t d\kappa_t\right],$$
 so, in particular, the $x$-admissibility condition \eqref{eq:x-admissible} can be written as $\sup_{Z\in\mathcal{Z}}\langle c,Z\rangle\leq x$.
 \begin{rem}
 The results of this paper hold as well if the set $\mathcal{Z}$ of equivalent martingale deflators is replaced by a set
 \begin{align*}
\mathcal{Z}':=\left\{\right.&\left.(Z_t)_{t\in\mathcal{I}}>0:\ Z \text{ is a c\`adl\`ag local martingale such that } Z_0=1 \text{ and}\right.\\
&\left. XZ=(X_tZ_t)_{t\in\mathcal{I}} \text{ is a local martingale for every } X\in\mathcal{X}\right\}
\end{align*}
of \textit{equivalent local martingale deflators}. Clearly, $\mathcal{Z}\subseteq\mathcal{Z}'$ and the assumption $\mathcal{Z}'\neq\emptyset$ is weaker than Assumption \ref{ass:NA}. By Proposition~1 in \cite{mostovyiNUPBR}, assumption $\mathcal{Z}'\neq\emptyset$ is equivalent to another version of no arbitrage condition -- no unbounded profit with bounded risk (NUPBR). Under this assumption, a consumption plan $c$ is $x$-admissible if and only if $\sup_{Z\in\mathcal{Z}'}\langle c,Z\rangle\leq x$, analogously to \eqref{eq:x-admissible} (see Lemma~1 in \cite{mostovyiNUPBR}), which makes it possible to replace $\mathcal{Z}$ with $\mathcal{Z}'$ in the arguments that follow.
 \end{rem}


\subsection{Domains for consumption processes}\label{subsec:domains}

In order to be able to formulate the drawdown constraint on consumption rate in this generalized framework, it is necessary to have a meaningful extension of the notion of running maximum from continuous processes to processes that are just measurable. Such an extension, which we call a running essential supremum process, is introduced in Definition \ref{def:running-max} and studied in Section \ref{sec:run-esssup}. The running essential supremum $\bar{c}$ of a non-negative optional process $c$ belongs (by Proposition \ref{prop:run-sup-properties}) to the following class of stochastic processes:
\begin{defn}\label{def:C-0-class}
The class $\mathcal{C}_{\text{inc}}$ consists of predictable processes $c:\Omega\times[0,\hat{T})\to[0,\infty]$ with non-decreasing, left-continuous paths starting from $c_0=0$.
\end{defn}
Moreover, $\bar{c}$ is the smallest element of $\mathcal{C}_{\text{inc}}$ that is greater or equal than $c$, $\mathbb{P}\times d\kappa-$almost surely on $\Omega\times[0,\hat{T})$ (Propositions \ref{prop:increasing-dom}, \ref{prop:minimality-of-c-bar}).

Now we can formulate the constraint and define the corresponding domains. Let $q\in\mathbb{R}$ and $\lambda\in[0,1]$. We introduce the following drawdown constraint on consumption:
\begin{equation}\label{cond:ddc-with-q}
c_t(\omega)\geq \lambda\cdot(\bar{c}_t(\omega)\vee q),\quad \mathbb{P}\times d\kappa-\text{a.e. on }\Omega\times[0,\hat{T}),\tag{DC$_{\lambda,q}$}
\end{equation}
where ``$\vee$'' denotes the pointwise maximum of two processes (one of which is the constant process $q$ in this case). Similarly, we reserve the notation ``$\wedge$'' for pointwise minimum of two processes. Clearly, when $q\leq 0$, the constraint \label{def:ddc-with-q} reduces to simply
\begin{equation}\label{cond:ddc}
c_t(\omega)\geq \lambda\bar{c}_t(\omega),\quad \mathbb{P}\times d\kappa-\text{a.e. on }\Omega\times[0,\hat{T}).\tag{DC$_{\lambda}$}
\end{equation}
For $x>0$, $\lambda\in[0,1]$, and $q\in\mathbb{R}$, we define the primal domains as follows:
\begin{equation}\label{def:primal-dom}
\mathcal{C}^\lambda(x,q):=\{c\geq 0\text{ optional}:\ c\vee \lambda(\bar{c}\vee q) \text{ is } x\text{-admissible} \},
\end{equation}
and denote $\mathcal{C}^\lambda(x):=\mathcal{C}^\lambda(x,0)$, the domain for optimization without parameter $q$, and $\mathcal{C}^\lambda:=\mathcal{C}^\lambda(1)$ so that $\mathcal{C}^\lambda(x)=x\cdot\mathcal{C}^\lambda$. The set $\mathcal{C}^\lambda(x,q)$ is non-empty as long as the constant process $c\equiv\lambda(0\vee q)$ is $x$-admissible. By \eqref{eq:x-admissible}, this holds if and only if $\lambda q\cdot\sup_{Z\in\mathcal{Z}}\mathbb{E}\left[\int_0^{\hat{T}}Zd\kappa\right]\leq x$.

The sets $\mathcal{C}^\lambda(x,q)$ are solid: if $c'\geq c\geq 0$, $\mathbb{P}\times d\kappa-$a.e., and $c'\in\mathcal{C}^\lambda(x,q)$ then $c\in\mathcal{C}^\lambda(x,q)$. As the following proposition states, $\mathcal{C}^\lambda(x,q)$ is in fact the solid hull of all $x$-admissible processes satisfying \eqref{cond:ddc-with-q}. We take definition \eqref{def:primal-dom} as our working definition of the domains, rather than the solid hull definition \eqref{def:primal-dom2}, because it provides a concrete way of checking whether a \textit{given} process $c$ belongs to $\mathcal{C}^\lambda(x,q)$: by checking whether $c\vee\lambda(\bar{c}\vee q)$ is $x$-admissible.
\begin{prop}\label{prop:C-is-closed}
Let $x>0$ and $q\in\mathbb{R}$. Every $x$-admissible process satisfying \eqref{cond:ddc-with-q} belongs to $\mathcal{C}^\lambda(x,q)$. If $c\in\mathcal{C}^\lambda(x,q)$ then $c\vee \lambda(\bar{c}\vee q)$ satisfies \eqref{cond:ddc-with-q} and belongs to $\mathcal{C}^\lambda(x,q)$.  As a consequence,
\begin{equation}\label{def:primal-dom2}
\begin{aligned}
\mathcal{C}^\lambda(x,q)=\left\{\right.&\left. c\geq 0\text{ optional}:\ \exists\ c'\geq 0 \text{ s.t. } c\leq c',\ \mathbb{P}\times d\kappa-\text{a.e.},\right.\\
&\left. c' \text{ satisfies }\eqref{cond:ddc-with-q},\text{ and }c'\text{ is } x\text{-admissible}\right\}.
\end{aligned}
\end{equation}
\end{prop}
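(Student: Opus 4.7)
The plan is to prove the three assertions of the proposition in order, with the second being the substantive one. For the first, I would observe that an $x$-admissible $c$ satisfying \eqref{cond:ddc-with-q} satisfies $c \vee \lambda(\bar{c}\vee q) = c$ $\mathbb{P}\times d\kappa$-a.e., so $c \vee \lambda(\bar{c}\vee q)$ is $x$-admissible and hence $c \in \mathcal{C}^\lambda(x,q)$ by the definition \eqref{def:primal-dom}.

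For the second assertion, given $c \in \mathcal{C}^\lambda(x,q)$, set $c' := c \vee \lambda(\bar{c}\vee q)$. The key step is to show $\overline{c'} \leq \bar{c}\vee q$ $\mathbb{P}\times d\kappa$-a.e., for this yields $\lambda(\overline{c'}\vee q) \leq \lambda(\bar{c}\vee q) \leq c'$, so $c'$ satisfies \eqref{cond:ddc-with-q}; combined with the $x$-admissibility of $c'$ built into the definition of $\mathcal{C}^\lambda(x,q)$, the first assertion then yields $c' \in \mathcal{C}^\lambda(x,q)$. To establish the bound I would proceed in two steps. First, using $c \leq \bar{c}$ $\mathbb{P}\times d\kappa$-a.e.\ (minimality of $\bar{c}$ as a dominator) and the estimate $\lambda(\bar{c}\vee q) \leq \bar{c}\vee q$ (valid since $\lambda \leq 1$ and $\bar{c}\vee q \geq 0$), I conclude $c' \leq \bar{c}\vee q$ a.e. Second, I construct a process $\tilde g$ by setting $\tilde g_0 := 0$ and $\tilde g_t := \bar{c}_t \vee q$ for $t > 0$; left-continuity and monotonicity of $\bar{c}$ together with $\bar{c}_0 = 0$ place $\tilde g$ in $\mathcal{C}_{\text{inc}}$, and $\tilde g$ dominates $c'$ off the $\mathbb{P}\times d\kappa$-null slice $\{t=0\}$. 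Then Proposition \ref{prop:minimality-of-c-bar} gives $\overline{c'} \leq \tilde g = \bar{c}\vee q$ a.e.

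The final set equality follows quickly from the first two parts and the solidity of $\mathcal{C}^\lambda(x,q)$ noted just before the proposition: the inclusion ``$\subseteq$'' uses the witness $c' := c \vee \lambda(\bar{c}\vee q)$, which by the second assertion is $x$-admissible, satisfies \eqref{cond:ddc-with-q}, and dominates $c$; the inclusion ``$\supseteq$'' follows because any $x$-admissible $c'$ satisfying \eqref{cond:ddc-with-q} lies in $\mathcal{C}^\lambda(x,q)$ by the first assertion, and then solidity propagates membership to any $c \leq c'$. The main obstacle is the inequality $\overline{c'} \leq \bar{c}\vee q$, whose subtlety is that $\bar{c}\vee q$ itself fails to belong to $\mathcal{C}_{\text{inc}}$ when $q > 0$ because it does not start from $0$; invoking the minimality property on the modification $\tilde g$ at the origin circumvents this, and is harmless since $\{t=0\}$ carries no $\mathbb{P}\times d\kappa$-mass.
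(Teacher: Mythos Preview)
Your proof is correct and follows essentially the same route as the paper. The only difference is in the second assertion: the paper computes the running essential supremum of $c':=c\vee\lambda(\bar c\vee q)$ exactly as $\bar c\vee\lambda q\,\mathbbm{1}_{(0,\hat T)}$ ``directly from Definition~\ref{def:running-max}'', whereas you only establish the weaker (but sufficient) bound $\overline{c'}\leq \bar c\vee q$ by invoking the minimality Proposition~\ref{prop:minimality-of-c-bar} on the adjusted process $\tilde g$. One small remark: the inequality $c\leq\bar c$ a.e.\ is Proposition~\ref{prop:increasing-dom} rather than ``minimality'', and to place $\tilde g$ in $\mathcal{C}_{\text{inc}}$ you also need predictability, which follows at once from that of $\bar c$ (Proposition~\ref{prop:run-sup-properties}).
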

\begin{proof}
For  every $x$-admissible process $c$ satisfying \eqref{cond:ddc-with-q}, $c\vee \lambda(\bar{c}\vee q)=c$, $\mathbb{P}\times d\kappa-$a.e., hence $c\vee \lambda(\bar{c}\vee q)$ is $x$-admissible and $c$ belongs to $\mathcal{C}^\lambda(x,q)$ by definition. It is easy to check directly from Definition \ref{def:running-max} that for every non-negative optional $c$ the running essential supremum of the process $c\vee \lambda(\bar{c}\vee q)$ is equal to $\bar{c}\vee \lambda q\mathbbm{1}_{(0,\hat{T})}$ and, therefore, $c\vee \lambda(\bar{c}\vee q)$ automatically satisfies \eqref{cond:ddc-with-q}. Therefore, if $c\in\mathcal{C}^\lambda(x,q)$ then $c\vee \lambda(\bar{c}\vee q)$ is $x$-admissible and satisfies \eqref{cond:ddc-with-q}, hence $c\vee \lambda(\bar{c}\vee q)$ belongs to $\mathcal{C}^\lambda(x,q)$ by the first assertion. To show \eqref{def:primal-dom2}, we take $c':=c\vee \lambda(\bar{c}\vee q)$ and use the first two assertions.
\end{proof}

Proposition \ref{prop:C-is-closed} implies that if a functional $\mathbb{U}$ on $\mathcal{C}^\lambda(x,q)$ satisfies the monotonicity property
\begin{equation}\label{eq:monotone-functional}
c_1\leq c_2,\quad \mathbb{P}\times d\kappa-\text{a.e.}\quad \Rightarrow\quad \mathbb{U}(c_1)\leq\mathbb{U}(c_2),
\end{equation}
and has a maximizer $c$ on $\mathcal{C}^\lambda(x,q)$, then $\hat{c}:=c\vee \lambda(\bar{c}\vee q)\geq c$ is also a maximizer and, in particular, \textit{it is a maximizer over all $x$-admissible processes satisfying \eqref{cond:ddc-with-q}}.

If $\lambda=0$ then the domains $\mathcal{C}^\lambda(x,q)=\mathcal{C}^0(x)$ consist of all $x$-admissible consumption plans and this case is handled in \cite{mostovyi}, even with more general assumptions on the stochastic clock $\kappa$. Therefore, we will only consider the optimization problem for $\lambda\in(0,1]$ in this paper. If $\lambda=1$ then we can say slightly more: the constraint \eqref{cond:ddc-with-q} turns into $c_t\geq \bar{c}_t\vee q$, $\mathbb{P}\times d\kappa-$a.e., which, by Proposition \ref{prop:increasing-dom}, holds if and only if $c=\bar{c}\geq q$, $\mathbb{P}\times d\kappa-$a.e. The definition \eqref{def:primal-dom} for $\lambda=1$ turns into
$$\mathcal{C}^1(x,q):=\{c\geq 0\text{ optional}:\ \bar{c}\vee q \text{ is } x\text{-admissible} \},$$
and for a functional $\mathbb{U}$ on $\mathcal{C}^1(x,q)$ satisfying \eqref{eq:monotone-functional} we obtain: if $c\in\mathcal{C}^1(x,q)$ is an optimizer then $\bar{c}\vee q\mathbbm{1}_{(0,\hat{T})}$ is also an optimizer and, in particular, it is an optimizer over all $x$-admissible processes $c'\in\mathcal{C}_{\text{inc}}$ with $c'_{0+}\geq q$. This argument allows us to completely embed an optimization problem over the set of $x$-admissible processes $c'\in\mathcal{C}_{\text{inc}}$ with $c'_{0+}\geq q$ into an optimization problem over the subset $\mathcal{C}^1(x,q)$ of non-negative optional processes. Finally, notice that the set $\mathcal{C}^1(x,q)$ contains all $x$-admissible non-negative optional $c$ such that for almost every $\omega\in\Omega$ the path $t\mapsto c_t(\omega)$ is non-decreasing and $c_{0+}(\omega)\geq q$. This holds because $c=\bar{c}\vee q$, $\mathbb{P}\times d\kappa-$a.e., for such $c$. Hence, it is completely legitimate to look at optimization of $\mathbb{U}$ satisfying \eqref{eq:monotone-functional} over all $x$-admissible non-decreasing optional processes that are (essentially) bounded from below by $q$ as optimization over $\mathcal{C}^1(x,q)$.


\subsection{Formulation of the optimization problem}

Let $U=U(\omega,t,x):\Omega\times[0,\hat{T})\times[0,\infty)\to\mathbb{R}\cup\{-\infty\}$ be a \textit{utility stochastic field} satisfying the following conditions (same conditions as in \cite{mostovyi}):
\begin{assume}\label{ass:utility}
For every $(\omega,t)\in\Omega\times[0,\hat{T})$ the function $x\mapsto U(\omega,t,x)$ is strictly concave, increasing, continuously differentiable on $(0,\infty)$ and satisfies the Inada conditions:
$$\lim_{x\downarrow 0}U'(\omega,t,x)=+\infty\quad\text{and}\quad\lim_{x\to\infty}U'(\omega,t,x)=0,$$
where $U'$ denotes the partial derivative with respect to $x$. At $x=0$ we define, by continuity, $U(\omega,t,0):=\lim_{x\downarrow 0}U(\omega,t,x)$, this value may be $-\infty$. For every $x\geq 0$ the stochastic process $U(\cdot,\cdot,x)$ is optional.
\end{assume}

We consider the problem where an agent maximizes his expected utility of intertemporal consumption $c$ under the $x$-admissibility constraint \eqref{eq:x-admissible} and the drawdown constraint \eqref{cond:ddc-with-q}. This problem therefore can be seen as the optimization over the set $\mathcal{C}^\lambda(x,q)$ defined in \eqref{def:primal-dom}. The associated value function is
\begin{equation}\label{eq:primal-problem}
u(x,q):=\sup_{c\in\mathcal{C}^\lambda(x,q)}\mathbb{E}\left[\int_0^{\hat{T}} U(\omega,t,c_t)d\kappa_t\right],\quad (x,q)\in\mathcal{K},
\end{equation}
where an appropriate domain $\mathcal{K}\subseteq\mathbb{R}^2$ for $(x,q)$ is to be specified later.
Here we use the convention
$$\mathbb{E}\left[\int_0^{\hat{T}} U(\omega,t,c_t)d\kappa_t\right]:=-\infty\quad\text{if}\quad \mathbb{E}\left[\int_0^{\hat{T}} U^{-}(\omega,t,c_t)d\kappa_t\right]=+\infty,$$
where $W^{-}$ denotes the negative part of a stochastic field $W$.

Our goal is to develop duality arguments, based on \cite{mostovyi}, describing the solutions of the two-parameter optimization problem \eqref{eq:primal-problem} in similar fashion as it is done in \cite{hug-kramkov} for expected utility maximization problem with random endowments at maturity.


\section{Running essential supremum of a non-negative optional process}\label{sec:run-esssup}
In this section, we introduce a notion of running essential supremum. This is the appropriate generalization to measurable processes of what running maximum is for continuous processes. To the best of our knowledge, this process has not been previously considered in the literature.

Let $c$ be a non-negative optional process on $(\Omega,\mathcal{F},(\mathcal{F}_t)_{t\in[0,\hat{T})},\mathbb{P})$. Since $c$ is $\mathcal{F}\otimes\mathcal{B}([0,\hat{T}))$-measurable, every path of $c$ is Borel measurable. Therefore, the following pathwise definition of running essential supremum of $c$ makes sense.
\begin{defn}\label{def:running-max}
For every $\omega\in\Omega$, we define
$$\bar{c}_t(\omega):=\esssup_{s\in[0,t]}c_s(\omega)\in[0,\infty]\quad \text{ for } t\in(0,\hat{T}),\quad\text{ and } \bar{c}_0(\omega):=0,$$
where the essential supremum is taken with respect to the Lebesgue measure. We call $\bar{c}$ the \textit{running essential supremum} of $c$.
\end{defn}
In the sequel, we often denote by the bar over the process its running essential supremum without stating it explicitly.

\begin{rem} Let $\tilde{c}_t(\omega):=\sup_{s\in[0,t]}c_s(\omega)$ for $(\omega,t)\in\Omega\times[0,\hat{T})$. This is the smallest non-decreasing process satisfying $c_t(\omega)\leq\tilde{c}_t(\omega)$ for all $(\omega,t)\in\Omega\times[0,\hat{T})$. Clearly, $\tilde{c}\geq \bar{c}$ and if $c\geq \lambda(\tilde{c}\vee q)$ then $c$ satisfies \eqref{cond:ddc-with-q}. However, there is a reason why we consider $\bar{c}$ and not $\tilde{c}$ in the formulation of the drawdown constraint \eqref{cond:ddc-with-q}. An optimizer for \eqref{eq:primal-problem} is determined up to $\mathbb{P}\times d\kappa-$nullsets, therefore, our definition of the drawdown constraint should be indifferent to changes of $c$  on $\mathbb{P}\times d\kappa-$nullsets. Hence, it would be natural to adopt a definition of running maximum which gives out the same process, up to indistinguishability, for any two processes that are equal $\mathbb{P}\times d\kappa-$a.e. This holds for $\bar{c}$ (Proposition \ref{prop:mon-of-sup}) but not for $\tilde{c}$. It also seems reasonable from an economic perspective to have a definition of running maximum that is unaffected by a jump of consumption rate $c$ at a single point, since this jump does not even affect the expected utility functional eventually.
\end{rem}
We will give an alternative definition of the running essential supremum, which is equivalent and makes it easier to prove certain properties of $\bar{c}$. First, we introduce the notion of the essential debut of level $l\geq 0$ by process $c$.


\subsection{Essential debut}\label{sec:ess-debut}

Let $l\in[0,\infty)$. Define a stochastic process
$$X_t^l(\omega)=\int_0^t\mathbbm{1}_{\{c_s(\omega)\geq l\}}ds,\quad t\in[0,\hat{T}),$$
which is non-negative, non-decreasing, continuous, and adapted. Define
$$\tau^l(\omega)=\inf\{t\in[0,\hat{T}): X_t^l(\omega)>0\},$$
the hitting time of $(0,\infty)$ by the process $X^l$, where by convention the infimum of an empty set is $\hat{T}$. By construction, $\tau^l$ can also be characterized as follows:
$$\tau^l(\omega)=\inf\{t\in[0,\hat{T}): \vert \left\{s\in[0,t]: c_s(\omega)\geq l\right\}\vert>0\},$$
where $\vert\cdot\vert$ denotes the Lebesgue measure of a set. That is, $\tau^l$ is the \textit{essential debut} (see \cite{dellacherie-meyerA}, Chapter IV, p.108) of the set $\{(\omega,t): c_t(\omega)\geq l\}$: given $\omega\in\Omega$, the set $\{s\in[0,t]: c_s(\omega)\geq l\}$ has a positive Lebesgue measure for every $t\in(\tau^l(\omega),\hat{T})$, and $\tau^l(\omega)$ is the smallest time satisfying this property. Clearly, the essential debut $\tau^l$ is no smaller than the hitting time $\inf\{t\in[0,\hat{T}):c_t(\omega)\geq l\}$, but it could be strictly larger if the process $c_t$ does not spend a strictly positive amount of time (in the sense of the Lebesgue measure) at or above level $l$ right after hitting it.
\begin{prop}\label{prop:stopping-time}
For every $l\geq 0$, $\tau^l$ is a stopping time.
\end{prop}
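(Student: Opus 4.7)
The plan is to show $\tau^l$ is a stopping time by expressing its sublevel sets in terms of measurable events involving the already-adapted process $X^l$, and then exploiting the right-continuity of the filtration (part of the usual conditions) to upgrade.

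\textbf{Step 1: identify $\{\tau^l < t\}$ with a level set of $X^l$.} The key observation is that, because $X^l$ is continuous, non-decreasing, and starts from $X_0^l = 0$, one has the identity
$$\{\tau^l < t\} = \{X_t^l > 0\}\qquad\text{for every }t\in(0,\hat{T}).$$
The inclusion ``$\subseteq$'' is immediate: if $\tau^l(\omega) < t$ then by definition of the infimum there exists $s\in[\tau^l(\omega),t)$ with $X_s^l(\omega) > 0$, and monotonicity of $X^l$ gives $X_t^l(\omega) \geq X_s^l(\omega) > 0$. The reverse inclusion uses continuity: if $X_t^l(\omega) > 0$, then since $X^l(\omega)$ is continuous and $X_0^l(\omega) = 0$, the set $\{s : X_s^l(\omega) > 0\}$ is a nonempty relatively open subset of $[0,t]$ not containing $0$, so it meets $[0,t)$ and hence $\tau^l(\omega) < t$.

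\textbf{Step 2: measurability.} Since $X^l$ is adapted, $\{X_t^l > 0\} \in \mathcal{F}_t$ for each $t\in[0,\hat{T})$, so by Step 1 we have $\{\tau^l < t\} \in \mathcal{F}_t$ for every $t$.

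\textbf{Step 3: passage to $\{\tau^l \leq t\}$.} Writing
$$\{\tau^l \leq t\} = \bigcap_{n\geq 1}\{\tau^l < t + \tfrac{1}{n}\} \in \bigcap_{n\geq 1}\mathcal{F}_{t+\frac{1}{n}} = \mathcal{F}_{t+},$$
and using the right-continuity of $(\mathcal{F}_t)_{t\in\mathcal{I}}$, which is part of the usual conditions imposed in Section~\ref{sec:math-framework}, yields $\{\tau^l \leq t\} \in \mathcal{F}_t$. This is precisely the definition of $\tau^l$ being a stopping time. Endpoints $t = 0$ and $t = \hat{T}$ require only a trivial check. There is no genuine obstacle here; the whole argument is a clean instance of the standard fact that the début of an open set by a continuous adapted process is a stopping time, specialized to the open set $(0,\infty)$ and the process $X^l$.
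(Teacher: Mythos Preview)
Your proof is correct and takes essentially the same approach as the paper: both recognize $\tau^l$ as the d\'ebut of the open set $(0,\infty)$ by the continuous adapted process $X^l$ and appeal to right-continuity of the filtration. The paper simply cites this standard fact (referring to Karatzas--Shreve), whereas you spell out the argument explicitly---as you yourself note in your closing sentence.
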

\begin{proof}
Since $\tau^l$ is the hitting time of the open set $(0,\infty)$ by the continuous adapted process $X^l$ and since the filtration $\left(\mathcal{F}_t\right)_{t\in[0,\hat{T})}$ is right-continuous, $\tau^l$ is a stopping time (see, for example, \cite{karatzas-shreve-book}, p.7, 2.6 Problem).
\end{proof}


\subsection{Alternative characterization of $\bar{c}_t$}
In the sequel, we often omit writing $\omega$ in pathwise statements.
\begin{prop}\label{prop:second-def}
The definition of the running essential supremum $\bar{c}_t$ in Definition \ref{def:running-max} is equivalent to the following:
$$\bar{c}_t=\sup\{l\geq 0: \tau^l<t\}=\inf\{l\geq 0: \tau^l\geq t\}.$$
\end{prop}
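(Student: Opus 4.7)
The plan is to set $A_t := \sup\{l \geq 0 : \tau^l < t\}$ and $B_t := \inf\{l \geq 0 : \tau^l \geq t\}$ and to prove the three-way equality $\bar c_t = A_t = B_t$ for $t \in (0,\hat T)$ in two stages: a monotonicity argument that yields $A_t = B_t$, and a pair of contrapositive inequalities relating $A_t$ to the pathwise essential supremum.

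First, I would verify that $l \mapsto \tau^l$ is non-decreasing. Since $\mathbbm{1}_{\{c_s \geq l\}}$ is non-increasing in $l$, the process $X^l$ is pointwise non-increasing in $l$, and so its first entry time into $(0,\infty)$ is non-decreasing in $l$. This forces $\{l \geq 0 : \tau^l < t\}$ to be a downward-closed subset of $[0,\infty)$ and $\{l \geq 0 : \tau^l \geq t\}$ to be its upward-closed complement; the two sets share a common boundary point that is simultaneously the supremum of the first and the infimum of the second, yielding $A_t = B_t$. Note that, for $t>0$, the first set is non-empty since $\tau^0=0$ (as $c\geq 0$ gives $X^0_s=s$), while the second is non-empty since $\tau^l=\hat T\geq t$ once $l$ exceeds all the values of the path of $c$.

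For $\bar c_t = A_t$ I would argue two opposite inequalities by contrapositive. For $A_t \leq \bar c_t$: for any $l > \bar c_t$, the pathwise definition of the essential supremum gives $|\{s \in [0,t] : c_s \geq l\}| = 0$, hence $X_t^l = 0$ and in particular $\tau^l \geq t$, so $\{l : \tau^l < t\} \subseteq [0,\bar c_t]$. For $A_t \geq \bar c_t$: for any $l < \bar c_t$, the same definition gives $|\{s \in [0,t] : c_s > l\}| > 0$, and therefore $X_t^l > 0$; combined with the upgrade below, this shows that every $l<\bar c_t$ lies in $\{l:\tau^l<t\}$, and letting $l\uparrow \bar c_t$ concludes the argument. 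The degenerate cases $\bar c_t \in \{0,\infty\}$ are covered by the same reasoning: when $\bar c_t = 0$, the first inequality gives $A_t\leq 0$ while $\tau^0=0<t$ gives $A_t\geq 0$; when $\bar c_t = \infty$, the second inequality applies to every finite $l$ and forces $A_t = \infty$.

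The step I expect to require the most care, and which I regard as the main (if minor) obstacle, is upgrading $X_t^l > 0$ to the strict inequality $\tau^l < t$, since on its own $X_t^l > 0$ merely gives $\tau^l \leq t$. Here I would invoke that $X^l$ is absolutely continuous, being the primitive of the bounded measurable function $\mathbbm{1}_{\{c \geq l\}}$, so that positivity at $t$ forces positivity on some left neighborhood $(t-\varepsilon,t)$, whence $\tau^l \leq t - \varepsilon < t$. Beyond this continuity upgrade, the proof is essentially a routine unpacking of the definitions of $\bar c_t$, $X^l$, and $\tau^l$.
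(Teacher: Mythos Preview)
Your proof is correct and follows essentially the same route as the paper: both establish $A_t=B_t$ from monotonicity of $l\mapsto\tau^l$ and then sandwich $\bar c_t$ via the two contrapositive inclusions. The only cosmetic difference is in how the borderline strictness is handled: the paper sidesteps your continuity upgrade by shifting the level, arguing that for $\nu<\theta$ one has $\tau^{\nu+\varepsilon}<t$ for small $\varepsilon$ directly from the definition of $\theta$ as a supremum, whence $|\{s\in[0,t]:c_s\geq\nu+\varepsilon\}|>0$; your route via continuity of $t\mapsto X_t^l$ achieves the same end. One minor imprecision: the remark ``$\tau^l=\hat T$ once $l$ exceeds all values of the path'' is not needed and not quite right (paths need not be bounded); what you actually use, and what suffices, is that $\tau^l\geq t$ whenever $l>\bar c_t$.
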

Note that the function $l\mapsto\tau^l(\omega)$ is non-decreasing for every $\omega\in\Omega$, hence Proposition~\ref{prop:second-def} says that $t\mapsto\bar{c}_t(\omega)$ is a generalized inverse of the non-decreasing function $l\mapsto\tau^l(\omega)$.
\begin{proof}
The statement is obvious for $t=0$, so let us fix $\omega\in\Omega$ and $t\in(0,\hat{T})$. Since $l\mapsto\tau^l(\omega)$ is a non-decreasing function, the second equality, between supremum and infimum, holds and we can denote $\theta:=\sup\{l\geq 0: \tau^l<t\}=\inf\{l\geq 0: \tau^l\geq t\}$.

For every $0\leq\nu<\theta$, $\tau^{\nu+\varepsilon}<t$ for all $\varepsilon>0$ small enough, therefore $$\vert\left\{s\in[0,t]: c_s>\nu\right\}\vert\geq\vert\left\{s\in[0,t]: c_s\geq\nu+\varepsilon\right\}\vert>0$$ for an $\varepsilon$ small enough, and hence $\nu<\esssup_{s\in[0,t]}c_s$ by the definition of essential supremum. This proves that $\theta\leq\esssup_{s\in[0,t]}c_s$. On the other hand, for every $\nu>\theta$, $\tau^\nu\geq t$ and therefore
$\vert \left\{s\in[0,t]: c_s\geq \nu\right\}\vert=0$. This implies that $\vert \{s\in[0,t]: c_s> \theta\}\vert=0$, i.e., $\theta\geq\esssup_{s\in[0,t]}c_s$,
and that the two definitions of $\bar{c}_t$, as the essential supremum and as the generalized inverse, coincide.
\end{proof}


\subsection{Properties of $\bar{c}_t$}
Clearly, $t\mapsto\bar{c}_t(\omega)$ is non-decreasing. In the following two propositions we show that the process $\bar{c}$ is left-continuous, predictable, and $c_t(\omega)\leq\bar{c}_t(\omega)$ for every $\omega\in\Omega$, Lebesgue-a.e. in $t$. 

\begin{prop}\label{prop:run-sup-properties}
The running essential supremum $\bar{c}$ of a non-negative optional process $c$ is left-continuous and predictable.
\end{prop}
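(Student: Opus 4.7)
The plan is to base everything on the alternative characterization of $\bar{c}$ from Proposition \ref{prop:second-def}, namely $\bar{c}_t(\omega) = \sup\{l \geq 0 : \tau^l(\omega) < t\}$, so that the two claimed properties follow from the fact that each $\tau^l$ is a stopping time (Proposition \ref{prop:stopping-time}).

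For left-continuity, fix $\omega \in \Omega$ and $t \in (0,\hat{T})$ (the case $t = 0$ is trivial since $\bar{c}_0 \equiv 0$). Because $l \mapsto \tau^l(\omega)$ is non-decreasing, the family $A_s := \{l \geq 0 : \tau^l(\omega) < s\}$ is non-decreasing in $s$, and $\bigcup_{s < t} A_s = A_t$ since $\tau^l(\omega) < t$ holds iff $\tau^l(\omega) < s$ for some $s < t$. Passing to suprema then gives $\bar{c}_s(\omega) = \sup A_s \uparrow \sup A_t = \bar{c}_t(\omega)$ as $s \uparrow t$.

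For predictability, I would first observe that the defining supremum may be restricted to rationals: given any real $l > 0$ with $\tau^l(\omega) < t$, monotonicity yields $\tau^{l'}(\omega) \leq \tau^l(\omega) < t$ for every rational $l' < l$, so $\sup\{l \in \mathbb{Q}_+ : \tau^l(\omega) < t\} = \sup\{l \geq 0 : \tau^l(\omega) < t\}$. Consequently
\[
\bar{c}_t(\omega) = \sup_{l \in \mathbb{Q}_+} l \cdot \mathbbm{1}_{\{\tau^l(\omega) < t\}}.
\]
For each fixed $l$, the process $(\omega,t) \mapsto \mathbbm{1}_{\{\tau^l(\omega) < t\}}$ has paths equal to $0$ on $[0,\tau^l(\omega)]$ and $1$ on $(\tau^l(\omega),\hat{T})$, so it is left-continuous in $t$ and adapted (by Proposition \ref{prop:stopping-time}), hence predictable. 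As a countable supremum of predictable processes, $\bar{c}$ is itself predictable.

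I do not anticipate a substantial obstacle: the alternative characterization in Proposition \ref{prop:second-def} has already done the work of separating the pathwise (essential supremum) definition of $\bar{c}$ from the measurability structure of $c$, reducing everything to statements about the one-parameter family of stopping times $\{\tau^l\}_{l \geq 0}$. The only mildly technical step is the pathwise rational-approximation of the supremum, which is handled cleanly by the monotonicity of $l \mapsto \tau^l(\omega)$.
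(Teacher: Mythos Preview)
Your proof is correct. For predictability, your argument is essentially the paper's: both express the result in terms of the predictable indicators $\mathbbm{1}_{(\tau^l,\hat{T})}$, the paper by writing $\{\bar c<\nu\}=\bigcup_n\{\tau^{\nu-\varepsilon_n}\geq t\}$, you by writing $\bar c$ as a countable supremum $\sup_{l\in\mathbb{Q}_+} l\,\mathbbm{1}_{(\tau^l,\hat{T})}$. For left-continuity, the paper argues directly from Definition~\ref{def:running-max} via a Lebesgue-measure estimate (if $|\{s\le t:c_s\ge \bar c_t-\varepsilon\}|=m>0$ then already $\bar c_{t-m/2}\ge \bar c_t-\varepsilon$), whereas you instead use the generalized-inverse characterization of Proposition~\ref{prop:second-def} and the elementary fact that $\bigcup_{s<t}\{l:\tau^l<s\}=\{l:\tau^l<t\}$; your route is a bit cleaner once Proposition~\ref{prop:second-def} is available, while the paper's has the minor virtue of being self-contained from the original definition.
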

In particular, this proposition shows that $t\mapsto\bar{c}_t(\omega)$ is the \textit{left-continuous generalized inverse} of the non-decreasing function $l\mapsto\tau^l(\omega)$ (cf.~definition of the right-continuous generalized inverse, for example, in \cite{karatzas-shreve-book}, p.174, 4.5 Problem).
\begin{proof}
To prove left-continuity, let us fix a path $\omega\in\Omega$ and $t\in(0,\hat{T})$. Since $s\mapsto\bar{c}_s$ is non-decreasing, we need to show that for every $\varepsilon>0$ there exists $\delta>0$ such that $\bar{c}_{t-\delta}\geq \bar{c}_t-\varepsilon$. Indeed, by the definition of $\bar{c}_t$,
$$m:=\vert\{s\in[0,t]: c_s\geq \bar{c}_t-\varepsilon\}\vert>0,$$
and, in particular, $\vert\{s\in[0,t-m/2]: c_s\geq \bar{c}_t-\varepsilon\}\vert\geq m/2>0$. Hence $\bar{c}_{t-m/2}\geq \bar{c}_t-\varepsilon$.

To prove predictability, we write for every $\nu>0$ and a sequence of numbers $\varepsilon_n\downarrow 0$,
\begin{equation}\label{eq:predictability}
\left\{(\omega,t): \bar{c}_t(\omega)<\nu\right\}=\cup_{n\geq 1}\left\{(\omega,t): \tau^{\nu-\varepsilon_n}(\omega)\geq t\right\}.
\end{equation}
The inclusion ``$\subseteq$'' in \eqref{eq:predictability} holds because if $\bar{c}_t(\omega)<\nu$ then for $n$ large enough (i.e., for $\varepsilon_n$ small enough) $\tau^{\nu-\varepsilon_n}(\omega)\geq t$ thanks to the characterization of $\bar{c}_t$ as the generalized inverse of $\tau^k$. Conversely, if $\tau^{\nu-\varepsilon}(\omega)\geq t$ for some $\varepsilon>0$ then $\bar{c}_t(\omega)\leq\nu-\varepsilon<\nu$.

Note that for every $\varepsilon>0$ the set $\left\{(\omega,t): \tau^{\nu-\varepsilon}(\omega)\geq t\right\}$ can be rewritten as $$\left\{(\omega,t): \mathbbm{1}_{(\tau^{\nu-\varepsilon}(\omega),\hat{T})}(t)=0\right\}.$$
Since $\tau^{\nu-\varepsilon}$ is a stopping time, the indicator function $\mathbbm{1}_{(\tau^{\nu-\varepsilon},\hat{T})}$ is measurable with respect to the predictable sigma-algebra. Hence, the right-hand side of \eqref{eq:predictability} is a countable union of predictable sets, which implies that the set $\{\bar{c}_t(\omega)<\nu\}$ is predictable for every $\nu>0$ and therefore $\bar{c}_t$ is a predictable stochastic process.
\end{proof}

\begin{prop}\label{prop:increasing-dom}
The running essential supremum $\bar{c}$ of a non-negative optional process $c$ satisfies for every $\omega\in\Omega$ the inequality
$$c_t(\omega)\leq\bar{c}_t(\omega)\quad \text{for Lebesgue almost every } t\in[0,\hat{T}).$$
\end{prop}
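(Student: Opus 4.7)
The plan is to fix $\omega\in\Omega$ throughout (everything is pathwise) and use the alternative characterization of $\bar{c}_t$ from Proposition \ref{prop:second-def}, namely $\bar{c}_t=\inf\{l\geq 0:\ \tau^l\geq t\}$. The key observation that makes the argument go through is that, for each fixed level $l\geq 0$, the set of times on which $c$ is above $l$ before the essential debut $\tau^l$ already has Lebesgue measure zero.

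First I would establish the pathwise level-set fact: for every $l\geq 0$,
\begin{equation*}
\bigl\lvert\{s\in[0,\tau^l(\omega)]:\ c_s(\omega)\geq l\}\bigr\rvert=0,
\end{equation*}
where $|\cdot|$ is Lebesgue measure. This is immediate from the definition of $\tau^l$ as the hitting time of $(0,\infty)$ by $X^l_t=\int_0^t\mathbbm{1}_{\{c_s\geq l\}}ds$ together with continuity of $X^l$: for $t<\tau^l$ one has $X^l_t=0$, and by left-continuity $X^l_{\tau^l}=0$ as well.

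Next I would cover the exceptional set by rationals. Suppose $t\in[0,\hat{T})$ satisfies $c_t(\omega)>\bar{c}_t(\omega)$. Pick a rational $l$ with $\bar{c}_t(\omega)<l<c_t(\omega)$. Since $m\mapsto\tau^m$ is non-decreasing and $\bar{c}_t=\inf\{m\geq 0:\tau^m\geq t\}$, the strict inequality $\bar{c}_t<l$ forces $\tau^l\geq t$. Combined with $c_t(\omega)\geq l$, this places $t$ in the nullset from the previous step. Hence
\begin{equation*}
\{t\in[0,\hat{T}):\ c_t(\omega)>\bar{c}_t(\omega)\}\subseteq\bigcup_{l\in\mathbb{Q}_{+}}\bigl\{s\in[0,\tau^l(\omega)]:\ c_s(\omega)\geq l\bigr\},
\end{equation*}
which is a countable union of Lebesgue nullsets, hence has Lebesgue measure zero. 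This gives the claim.

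I do not anticipate a serious obstacle: the content is really in Proposition \ref{prop:second-def} (already available) and in the continuity of $X^l$, which makes the nullset fact at each level immediate. The only subtlety to be careful about is the direction of the inequality when passing between $\bar{c}_t$ and $\tau^l$ — one has to use \emph{strict} inequality $\bar{c}_t<l$ (which is why rationals strictly between $\bar{c}_t$ and $c_t$ are the right objects) in order to conclude $\tau^l\geq t$ from the generalized-inverse description.
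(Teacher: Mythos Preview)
Your proof is correct, and it takes a genuinely different route from the paper's. The paper argues by contradiction: assuming the set $A^\varepsilon=\{t:\bar{c}_t\leq c_t-\varepsilon\}$ has positive measure for some $\varepsilon>0$, it iteratively produces times $t_1>t_2>\cdots$ in $A^\varepsilon$ together with Lebesgue-null exceptional sets $B_1,B_2,\ldots$ so that after $n$ steps $c_s\leq c_{t_1}-n\varepsilon$ on a set of positive measure, eventually contradicting $c\geq 0$. This argument works directly from Definition~\ref{def:running-max} and does not invoke the essential debuts $\tau^l$ or Proposition~\ref{prop:second-def} at all.

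Your approach is more structural: you exploit the generalized-inverse description of $\bar{c}_t$ from Proposition~\ref{prop:second-def} and the built-in fact that $X^l_{\tau^l}=0$ to cover the exceptional set by countably many level-set nullsets indexed by rationals. This is cleaner and shorter, at the cost of relying on Proposition~\ref{prop:second-def} (which is already available at this point in the paper, so no circularity). The paper's argument, by contrast, is self-contained from the esssup definition but more delicate to execute. Both are valid; yours makes better use of the machinery already in place.
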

\begin{proof}
Fix a path $\omega\in\Omega$. The statement would follow if we show that for every $\varepsilon>0$ the set $A^\varepsilon:=\{t\in[0,\hat{T}):\bar{c}_t\leq c_t-\varepsilon\}$ has zero Lebesgue measure. Assume that $\vert A^\varepsilon\vert>0$ for some $\varepsilon>0$. Then there exists $t_1\in A^\varepsilon$ such that $\vert A^\varepsilon\cap [0,t_1]\vert>0$. Otherwise, the set $A^\varepsilon\cap [0,\sup A^\varepsilon)\subseteq\cup_{k\geq 1}(A^\varepsilon\cap [0,s_k])$, where $s_k\in A^\varepsilon$ and $s_k\uparrow\sup A^\varepsilon$ as $k\to\infty$, has measure zero, a contradiction. The fact that $t_1\in A^\varepsilon$ and the definition of $\bar{c}_{t_1}$ as the essential supremum imply that the set
$$B_1:=\{s\in[0,t_1]: c_s>c_{t_1}-\varepsilon\}$$
has Lebesgue measure zero. Therefore, the set $A^\varepsilon\cap [0,t_1]\setminus B_1$ still has a strictly positive Lebesgue measure, and $c_s\leq c_{t_1}-\varepsilon$ on $[0,t_1]\setminus B_1$. Now we can choose $t_2\in A^\varepsilon\cap [0,t_1]\setminus B_1$ such that $\vert A^\varepsilon\cap [0,t_2]\setminus B_1\vert>0$ and define the set
$$B_2:=\{s\in[0,t_2]: c_s>c_{t_2}-\varepsilon\}.$$
In the same way, we obtain that $\vert B_2\vert=0$, $\vert A^\varepsilon\cap [0,t_2]\setminus (B_1\cup B_2)\vert>0$, and $c_s\leq c_{t_2}-\varepsilon\leq c_{t_1}-2\varepsilon$ on $[0,t_2]\setminus (B_1\cup B_2)$. Continuining this procedure analogously, after $n$ steps we obtain $n$ Lebesgue-negligible sets $B_1,...,B_n$ and a number $t_n\in(0,\hat{T})$ such that $$\vert A^\varepsilon\cap [0,t_n]\setminus (B_1\cup ...\cup B_n)\vert>0$$ and $c_s\leq c_{t_1}-n\varepsilon$ on $[0,t_n]\setminus (B_1\cup...\cup B_n)$. This cannot be repeated infinitely many times since the process $c$ is non-negative and $c_{t_1}<\infty$, and the contradiction implies that $\vert A^\varepsilon\vert=0$.
\end{proof}

By Proposition \ref{prop:run-sup-properties}, the running essential supremum $\bar{c}$ of every non-negative optional process $c$ belongs to $\mathcal{C}_{\text{inc}}$ (see Definition \ref{def:C-0-class}). Note that since two left-continuous stochastic processes on $(\Omega,[0,\hat{T}))$ are equal $\mathbb{P}\times\text{Leb}([0,\hat{T}))-$a.e. if and only if they are indistinguishable, the same is true for any two processes in $\mathcal{C}_{\text{inc}}$. Moreover, the following property holds for running essential suprema:
\begin{prop}\label{prop:mon-of-sup}
If $c^1$, $c^2$ are two non-negative optional processes such that $c^1\leq c^2$, $\mathbb{P}\times\text{Leb}([0,\hat{T}))-$a.e., then
$$\mathbb{P}\left(\{\omega: \bar{c}^1_t(\omega)\leq\bar{c}^2_t(\omega)\text{ for every }t\in[0,\hat{T})\}\right)=1.$$
In particular, if $c^1=c^2$, $\mathbb{P}\times\text{Leb}([0,\hat{T}))-$a.e., then $\bar{c}^1$ and $\bar{c}^2$ are indistinguishable.
\end{prop}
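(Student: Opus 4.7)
The plan is to reduce the claim to a pathwise statement and then invoke the elementary monotonicity of the essential supremum with respect to the almost-everywhere order.

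First, I would apply Fubini's theorem to the hypothesis $c^1 \leq c^2$, $\mathbb{P} \times \text{Leb}([0,\hat{T})) -$a.e. This yields a $\mathbb{P}$-null set $N \in \mathcal{F}$ such that for every $\omega \notin N$ the set
$$E(\omega) := \{t \in [0,\hat{T}) : c^1_t(\omega) > c^2_t(\omega)\}$$
has Lebesgue measure zero. The desired assertion will be proved with this $N$ as the exceptional set.

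Next, fix $\omega \notin N$ and $t \in (0,\hat{T})$. I would use the basic fact that if two Borel measurable functions $f, g \colon [0,t] \to [0,\infty]$ satisfy $f \leq g$ Lebesgue-a.e., then $\esssup_{s\in[0,t]} f(s) \leq \esssup_{s\in[0,t]} g(s)$: indeed, any $M \in [0,\infty]$ with $g \leq M$ Lebesgue-a.e.\ also satisfies $f \leq M$ Lebesgue-a.e., so the infimum defining the essential supremum is monotone. Applying this with $f = c^1_\cdot(\omega)$ and $g = c^2_\cdot(\omega)$ on $[0,t]$ (which agree outside $E(\omega)$, a Lebesgue-null set) gives directly from Definition \ref{def:running-max} that $\bar{c}^1_t(\omega) \leq \bar{c}^2_t(\omega)$. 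The case $t=0$ is trivial since both sides vanish by definition. Thus the inequality holds for every $t \in [0,\hat{T})$ on the full-measure event $\Omega \setminus N$, which is the first assertion.

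For the second assertion, if $c^1 = c^2$, $\mathbb{P} \times \text{Leb} -$a.e., then both inequalities $c^1 \leq c^2$ and $c^2 \leq c^1$ hold $\mathbb{P} \times \text{Leb}$-a.e., so applying the first assertion twice yields $\mathbb{P}$-null sets $N_1, N_2$ outside which $\bar{c}^1_t(\omega) \leq \bar{c}^2_t(\omega)$ and $\bar{c}^2_t(\omega) \leq \bar{c}^1_t(\omega)$, respectively, for every $t$. On $\Omega \setminus (N_1 \cup N_2)$ we have $\bar{c}^1_t(\omega) = \bar{c}^2_t(\omega)$ for every $t \in [0,\hat{T})$, which is indistinguishability.

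I do not foresee any real obstacle here: the argument is essentially Fubini plus the pathwise monotonicity of the essential supremum. The only point requiring a little care is ensuring that after applying Fubini the exceptional $\mathbb{P}$-null set $N$ is chosen once and works uniformly in $t$, which is exactly what Fubini delivers; after that, the comparison $\bar{c}^1_t(\omega) \leq \bar{c}^2_t(\omega)$ is obtained simultaneously for all $t$ from a single Lebesgue-a.e.\ path inequality, so no exceptional set in $t$ enters the final statement.
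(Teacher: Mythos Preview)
Your proof is correct and follows essentially the same approach as the paper's: Fubini's theorem to obtain a $\mathbb{P}$-null exceptional set outside of which $c^1_\cdot(\omega)\leq c^2_\cdot(\omega)$ Lebesgue-a.e., followed by pathwise monotonicity of the essential supremum. The paper phrases the first assertion via the contrapositive (if $\bar{c}^1_t(\omega)>\bar{c}^2_t(\omega)$ for some $t$, then the path inequality fails on a set of positive Lebesgue measure), but the content is identical.
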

\begin{proof}
If $\bar{c}^1_t(\omega)>\bar{c}^2_t(\omega)$ for some $t\in(0,\hat{T})$ then $\vert s\in [0,\hat{T}): c^1_s(\omega)>c^2_s(\omega)\vert>0$. However, by Fubini's theorem, since $c^1\leq c^2$, $\mathbb{P}\times\text{Leb}([0,\hat{T}))-$a.e.,
$$\mathbb{P}\left(\{\omega: \vert s\in[0,\hat{T}): c^1_s(\omega)>c^2_s(\omega)\vert>0\}\right)=0,$$
proving the first assertion. The second assertion easily follows from the first.
\end{proof}
The following proposition states that $\bar{c}$ is the smallest element of $\mathcal{C}_{\text{inc}}$ dominating $c$.
\begin{prop}\label{prop:minimality-of-c-bar}
If $c$ is a non-negative optional process and $c'\in\mathcal{C}_{\text{inc}}$ is such that $c\leq c'$, $\mathbb{P}\times\text{Leb}([0,\hat{T}))-$a.e., then $\bar{c}_t\leq c'_t$ for all $t\in [0,\hat{T})$, $\mathbb{P}-$a.e.
\end{prop}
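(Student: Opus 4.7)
The plan is to reduce to showing that the running essential supremum of $c'$ equals $c'$ itself, and then invoke Proposition~\ref{prop:mon-of-sup}.

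First, I would apply Proposition~\ref{prop:mon-of-sup} to the pair $c,c'$: the assumption $c\leq c'$, $\mathbb{P}\times\mathrm{Leb}([0,\hat{T}))$-a.e., yields a set of full $\mathbb{P}$-measure on which $\bar{c}_t(\omega)\leq\bar{c'}_t(\omega)$ simultaneously for every $t\in[0,\hat{T})$. The remaining task is therefore to verify that $\bar{c'}_t(\omega)=c'_t(\omega)$ for every $t\in[0,\hat{T})$ (no exceptional $\omega$-set is needed here, since the identity is pathwise).

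Next, I would fix an arbitrary $\omega$ and $t\in(0,\hat{T})$ and prove the two inequalities separately, exploiting the two defining features of $\mathcal{C}_{\text{inc}}$. For the upper bound $\bar{c'}_t(\omega)\leq c'_t(\omega)$, monotonicity gives $c'_s(\omega)\leq c'_t(\omega)$ for every $s\in[0,t]$, hence certainly Lebesgue-a.e., so the essential supremum is at most $c'_t(\omega)$. For the lower bound $\bar{c'}_t(\omega)\geq c'_t(\omega)$, I would fix $\nu<c'_t(\omega)$; by left-continuity of $c'(\cdot,\omega)$ at $t$ there exists $s^*<t$ with $c'_{s^*}(\omega)>\nu$, and then monotonicity gives $c'_s(\omega)\geq c'_{s^*}(\omega)>\nu$ for all $s\in[s^*,t)$, a subset of $[0,t]$ of positive Lebesgue measure. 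Hence $\nu\leq\esssup_{s\in[0,t]}c'_s(\omega)=\bar{c'}_t(\omega)$, and letting $\nu\uparrow c'_t(\omega)$ closes the bound. At $t=0$ the identity $\bar{c'}_0=0=c'_0$ holds by the convention in Definition~\ref{def:running-max} and the requirement $c'_0=0$ in Definition~\ref{def:C-0-class}.

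Combining the two steps yields $\bar{c}_t(\omega)\leq\bar{c'}_t(\omega)=c'_t(\omega)$ for every $t\in[0,\hat{T})$, $\mathbb{P}$-a.e. There is no real obstacle; the only point requiring care is making sure the quantifiers match — in particular, that Proposition~\ref{prop:mon-of-sup} provides the domination \emph{simultaneously in $t$} on a single full-measure $\omega$-set, rather than a separate null set for each $t$, which is exactly what its statement delivers. The substantive content of the argument is the pathwise identity $\bar{c'}=c'$ on $\mathcal{C}_{\text{inc}}$, which is where both left-continuity and monotonicity are used in an essential way.
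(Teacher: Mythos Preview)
Your proof is correct. It differs organizationally from the paper's argument: the paper argues directly that if $\bar c_t(\omega)>c'_t(\omega)$ for some $t$, then by monotonicity of $c'$ and the definition of essential supremum one gets $|\{s:c_s(\omega)>c'_s(\omega)\}|>0$, and Fubini finishes. You instead factor the argument through Proposition~\ref{prop:mon-of-sup} and the pathwise identity $\bar{c'}=c'$ on $\mathcal{C}_{\text{inc}}$. Your route has the merit of isolating $\bar{c'}=c'$ as a reusable lemma; the paper's route is slightly leaner and in fact uses only that $c'$ is non-decreasing (left-continuity is not needed there), whereas your lower bound $\bar{c'}_t\geq c'_t$ does invoke left-continuity. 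Since only the upper bound $\bar{c'}_t\leq c'_t$ is needed to conclude, you could streamline by dropping the lower-bound half.
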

\begin{proof}
Assume that $\bar{c}_t(\omega)>c'_t(\omega)$ for some $\omega\in\Omega$ and $t\in(0,\hat{T})$. Then, since $c'$ is non-decreasing and by the definition of running essential supremum,
\begin{equation}\label{eq:ineq-a-e}
\vert\{s\in[0,\hat{T}): c_s(\omega)>c'_s(\omega)\}\vert>0.
\end{equation}
Since $c\leq c'$, $\mathbb{P}\times\text{Leb}([0,\hat{T}))-$a.e., by Fubini's theorem, the set of $\omega$'s satisfying \eqref{eq:ineq-a-e} has measure zero. Hence,
$\mathbb{P}\left(\{\omega: \bar{c}_t(\omega)\leq c'_t(\omega)\text{ for all }t\in [0,\hat{T})\}\right)=1.$
\end{proof}


\subsection{Integration with respect to $c\in\mathcal{C}_{\text{inc}}$}

Fix a $[0,\infty)$-valued process $c\in\mathcal{C}_{\text{inc}}$. For every $\omega\in\Omega$, $c$ uniquely defines a non-negative Borel measure $dc_t(\omega)$ on $[0,\hat{T})$ via
$$dc_t(\omega)([0,t))=c_{t}(\omega),\quad t\in(0,\hat{T}).$$
Note that if $c(\omega)$ has a jump at $t$, then $dc_t(\omega)(\{t\})=c_{t+}-c_t$. This is the same random measure that comes from the non-decreasing c\`adl\`ag adapted process 
$$c'_t:=c_{t+}=\lim_{\varepsilon\downarrow 0}c_{t+\varepsilon}\quad\text{for } t\in[0,\hat{T}),\quad\quad c'_{0-}:=0.$$
Integration with respect to non-decreasing c\`adl\`ag adapted processes, being a generalization of the Lebesgue-Stieltjes integration to stochastic processes, is a classical topic in the general theory of stochastic processes (see, for example, \cite{dellacherie-meyerB}, Chapter VI, no. 51--58) and in the next proposition we list several properties of integration with respect to $dc_t=dc'_t$ that will be extensively used in this paper.

Throughout the paper, we adopt the following conventions:
\begin{enumerate}
\item The integral $\int_0^{\hat{T}}$ is understood as $\int_{[0,\hat{T})}$, i.e., $0$ is included in the domain of the integration.
\item For two $\mathcal{F}\otimes\mathcal{B}([0,\hat{T}))$-measurable processes $D^{1,2}:\Omega\times[0,\hat{T})\to[0,\infty)$ we say that $$D^1\leq D^2\quad \text{up to indistinguishability}$$
if $\mathbb{P}\left(\{\omega: D^1_t(\omega)\leq D^2_t(\omega)\text{ for all }t\in[0,\hat{T})\}\right)=1$. By the Optional Section Theorem, for optional processes $D^{1,2}$ this condition is equivalent to
$D^1_T\mathbbm{1}_{\{T<\hat{T}\}}\leq D^2_T\mathbbm{1}_{\{T<\hat{T}\}}$, $\mathbb{P}-$a.e., for every stopping time $T:\Omega\to[0,\hat{T}]$.
 \end{enumerate}
\begin{prop}
Let $c\in\mathcal{C}_{\text{inc}}$ be $[0,\infty)$-valued. Then
\begin{enumerate}
\item For every $D:\Omega\times[0,\hat{T})\to[0,\infty)$ that is $\mathcal{F}\otimes\mathcal{B}([0,\hat{T}))$-measurable, we have
\begin{equation}\label{eq:int-optional-proj}
\mathbb{E}\left[\int_0^{\hat{T}} D_tdc_t\right]=\mathbb{E}\left[\int_0^{\hat{T}}{}^oD_tdc_t\right],
\end{equation}
where ${}^oD$ denotes the optional projection of $D$.
\item For every $f:\Omega\times[0,\hat{T})\to[0,\infty)$ that is $\mathcal{F}\otimes\mathcal{B}([0,{\hat{T}}))$-measurable, we have
\begin{equation}\label{eq:IBP}
\mathbb{E}\left[\int_0^{\hat{T}} f_t c_t dt\right]=\mathbb{E}\left[\int_0^{\hat{T}} \left(\int_s^{\hat{T}} f_tdt\right)dc_s\right].
\end{equation}
\item If two optional processes $D^{1,2}:\Omega\times[0,{\hat{T}})\to[0,\infty)$ satisfy $D^1_T\mathbbm{1}_{\{T<\hat{T}\}}\leq D^2_T\mathbbm{1}_{\{T<\hat{T}\}}$, $\mathbb{P}-$a.e., for every stopping time $T\in[0,\hat{T}]$, then
\begin{equation}\label{eq:comparison-for-int}
\mathbb{E}\left[\int_0^{\hat{T}} D^1_tdc_t\right]\leq\mathbb{E}\left[\int_0^{\hat{T}} D^2_tdc_t\right].
\end{equation}
\end{enumerate}
\end{prop}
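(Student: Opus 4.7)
The plan is to reduce all three statements to classical results on Lebesgue--Stieltjes integration against c\`adl\`ag non-decreasing adapted processes, by replacing $c$ with its c\`adl\`ag modification $c'_t := c_{t+}$ described in the paragraph preceding the statement. Since $c$ is left-continuous with $c_0 = 0$, the random measures induced on $[0,\hat T)$ coincide: $dc_t(\omega) = dc'_t(\omega)$ (with possible atom $c_{0+}(\omega)$ at $t=0$). Because the filtration satisfies the usual conditions, $c'$ is an \emph{optional} increasing process, so the general machinery of \cite{dellacherie-meyerB}, Chapter VI, nos.\ 51--58, applies directly to $\int \cdot\, dc_t = \int \cdot\, dc'_t$.

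For part (1), I will invoke the defining property of the dual optional projection. For any raw integrable (or non-negative) increasing process $A$ and any $\mathcal{F}\otimes\mathcal{B}([0,\hat T))$-measurable $D\geq 0$, one has $\mathbb{E}[\int D_t\, dA_t] = \mathbb{E}[\int {}^oD_t\, dA^{(o)}_t]$, where $A^{(o)}$ is the dual optional projection of $A$. When $A = c'$ is itself optional, $A^{(o)}=A$, and the identity reduces to \eqref{eq:int-optional-proj}. The only nontrivial point is to notice that the classical identity, usually phrased for c\`adl\`ag increasing adapted processes, applies here through the equality $dc=dc'$.

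For part (2), I will use the identity
\begin{equation*}
c_t(\omega) = \int_0^{\hat T}\mathbbm{1}_{\{s<t\}}\, dc_s(\omega),
\end{equation*}
which holds pathwise for every $t\in[0,\hat T)$ thanks to left-continuity of $c$ and $c_0=0$. Substituting into the left-hand side of \eqref{eq:IBP} and applying Tonelli pathwise on $[0,\hat T)^2$ to interchange $dt$ with $dc_s$, then Tonelli again to interchange with the expectation, delivers the right-hand side. Since $f\geq 0$, all interchanges are unconditional and measurability of the iterated integrals is automatic.

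For part (3), I will first combine the hypothesis with the Optional Section Theorem, exactly as invoked in Convention 2 of the paper, to conclude that $D^1\leq D^2$ up to indistinguishability, i.e., there is a $\mathbb{P}$-null set $N$ outside which $D^1_t(\omega)\leq D^2_t(\omega)$ for every $t\in[0,\hat T)$. Monotonicity of the pathwise Lebesgue--Stieltjes integral against the non-negative measure $dc_\cdot(\omega)$ then gives the inequality $\omega$-wise off $N$, and taking expectations yields \eqref{eq:comparison-for-int}. The main subtlety across the proof is part (1), where one must justify applying the optional-projection identity to the left-continuous $c$ rather than to a c\`adl\`ag process, but this is handled in one line by the identity $dc = dc'$ and optionality of $c'$.
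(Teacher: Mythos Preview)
Your proposal is correct and follows essentially the same route as the paper: item~1 is reduced to the optional-projection identity of Dellacherie--Meyer (Chapter VI, no.~57) via the c\`adl\`ag modification $c'$, item~2 is Fubini/Tonelli applied to $c_t=\int_{[0,t)}dc_s$, and item~3 is the Optional Section Theorem followed by pathwise monotonicity. Your write-up simply spells out the one-line reduction $dc=dc'$ that the paper leaves implicit.
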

\begin{proof}
Item 1 is proved in \cite{dellacherie-meyerB}, Chapter VI, no. 57. Item 2 follows from
$$\mathbb{E}\left[\int_0^{\hat{T}} f_t c_t dt\right]=\mathbb{E}\left[\int_0^{\hat{T}} f_t \int_{[0,t)}dc_s dt\right]$$
and Fubini's theorem. Item 3 follows since the Optional Section Theorem implies that $D^1\leq D^2$ up to indistinguishability.
\end{proof}
We will also need the following definition and lemma on several occasions.
\begin{defn}\label{def:pt-of-increase}
For $c\in\mathcal{C}_{\text{inc}}$ and $\omega\in\Omega$, we write $dc_t(\omega)>0$ (or simply $dc_t>0$) and call $t\in[0,\hat{T})$ a \textit{point of increase} of $c(\omega)$ if $c_{s}(\omega)>c_t(\omega)$ for all $s\in(t,\hat{T})$.
\end{defn}
\begin{lem}\label{lem:pt-of-increase}
For a non-negative optional right-continuous process $D$ and for $c\in\mathcal{C}_\text{inc}$, the following are equivalent:
\begin{enumerate}
\item $D_t=0$, $\mathbb{P}\times dc_t-$almost everywhere on $\Omega\times[0,{\hat{T}})$.
\item $\mathbb{P}-$almost everywhere, $D_t= 0$ for all $t\in[0,{\hat{T}})$ such that $dc_t>0$.
\end{enumerate}
\end{lem}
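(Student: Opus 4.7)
The plan is to reduce everything to the following pathwise observation: for every $\omega \in \Omega$, the set of times that are \emph{not} points of increase of $c(\omega)$ carries no $dc(\omega)$-mass. Denoting $N(\omega) := \{t \in [0, \hat{T}) : t \text{ is not a point of increase of } c(\omega)\}$, I would first prove $dc(\omega)(N(\omega)) = 0$ as follows. By Definition \ref{def:pt-of-increase}, each $t \in N(\omega)$ admits some $s > t$ with $c_s(\omega) = c_t(\omega)$, and by monotonicity of $c$ this forces $c(\omega)$ to be constant on $[t, s]$. The maximal intervals on which $c(\omega)$ is constant form a disjoint (hence countable, via rationals) collection of subintervals of $[0, \hat{T})$ whose union covers $N(\omega)$, and each such interval has $dc$-mass equal to zero since $c$ is constant on it; taking the countable union finishes the pathwise claim.

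The direction (2) $\Rightarrow$ (1) is then immediate: if (2) holds, then on a $\mathbb{P}$-full set of $\omega$'s the function $D(\omega)$ vanishes on the complement of $N(\omega)$, so by the pathwise lemma $D_t(\omega) = 0$ for $dc(\omega)$-almost every $t$, and (1) follows upon integrating against $\mathbb{P}$.

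For the converse (1) $\Rightarrow$ (2), the crucial ingredient is the right-continuity of $D$. Starting from (1), by Fubini there is a $\mathbb{P}$-null set $\Omega_0$ such that for every $\omega \notin \Omega_0$ the set $A(\omega) := \{t \in [0, \hat{T}) : D_t(\omega) > 0\}$ is $dc(\omega)$-negligible. Fixing such an $\omega$, I would argue by contradiction: if some point of increase $t_0$ had $D_{t_0}(\omega) > 0$, then right-continuity of $D$ would yield $\varepsilon > 0$ with $D_t(\omega) > D_{t_0}(\omega)/2$ throughout $[t_0, t_0 + \varepsilon)$, giving $[t_0, t_0 + \varepsilon) \subseteq A(\omega)$. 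But $dc(\omega)([t_0, t_0 + \varepsilon)) = c_{t_0 + \varepsilon}(\omega) - c_{t_0}(\omega) > 0$ precisely because $t_0$ is a point of increase, contradicting $dc(\omega)(A(\omega)) = 0$.

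The main obstacle is really only the pathwise measure-theoretic step of proving $dc(\omega)(N(\omega)) = 0$; once one is comfortable with the standard fact that Stieltjes measures attached to non-decreasing functions ignore flat intervals, the rest is a reinterpretation of ``points of increase form a $dc$-full set'' together with the right-continuity that allows one to upgrade from ``$dc$-a.e.'' to ``everywhere on the set of points of increase''.
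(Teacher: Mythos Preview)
Your argument for $(1)\Rightarrow(2)$ is exactly the paper's: right-continuity of $D$ turns a single point of increase with $D>0$ into a whole interval carrying positive $dc$-mass.

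For $(2)\Rightarrow(1)$ the two routes diverge. You give a purely pathwise covering argument: the set $N(\omega)$ of non--points-of-increase is contained in the union of the (countably many, positive-length) maximal constancy intervals of $c(\omega)$, each of which is $dc(\omega)$-null. The paper instead invokes the time-change formula of Dellacherie--Meyer: with $T_l:=\inf\{t:c_t>l\}$ one has $\mathbb{E}\bigl[\int_0^{\hat T}D_t\,dc_t\bigr]=\mathbb{E}\bigl[\int_0^\infty D_{T_l}\mathbbm{1}_{\{T_l<\hat T\}}\,dl\bigr]$, and since each $T_l$ is either $\hat T$ or a point of increase, the integrand on the right vanishes by hypothesis. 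Your approach is more elementary and entirely deterministic (it uses neither optionality of $D$ nor the filtration), while the paper's is shorter once one is willing to quote the time-change identity.

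One small correction: the statement ``each such interval has $dc$-mass equal to zero'' is not quite right as written. A maximal constancy interval is of the form $J=(a,b]$ or $[a,b]$ (left-continuity forces $b\in J$ when $b<\hat T$), and $dc(\{b\})=c_{b+}-c_b$ may well be strictly positive. What saves you is that $b$ itself is a point of increase (by maximality of $J$), so $N(\omega)\cap J\subseteq J\setminus\{b\}$, and it is $J\setminus\{b\}$ that has $dc$-mass zero. With this adjustment your covering argument goes through.
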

\begin{proof}
$``1.\Rightarrow 2."$
Assume that for some $\omega\in\Omega$ there exists $t\in[0,{\hat{T}})$ such that $D_t(\omega)> 0$ and $dc_t(\omega)>0$. Then $D_s(\omega)>0$ for all $s\in[t,t+\varepsilon]$ for $\varepsilon>0$ small enough by right-continuity of $D$, but also $dc(\omega)([t,t+\varepsilon])>0$. Hence $\int_0^{\hat{T}} D_t(\omega)dc_t(\omega)>0$, which is only possible on a $\mathbb{P}-$nullset by 1.

$``2.\Rightarrow 1."$
For $l\geq 0$, define a stopping time $T_l:=\inf\{t\in[0,{\hat{T}}): c_t>l\}$, where the infimum of an empty set is taken to be ${\hat{T}}$. This is indeed a stopping time by the right-continuity of the filtration. Since $c$ is left-continuous and non-decreasing, $c_{T_l}\leq l<c_{s}$ for any $s\in(T_l,{\hat{T}})$, hence $T_l$ is either ${\hat{T}}$, or a point of increase of $c$. Therefore, by 2., $D_{T_l}\mathbbm{1}_{\{T_l<{\hat{T}}\}}=0$ for every $l\geq0$, $\mathbb{P}-$almost surely. The time-change formula (55.1) in \cite{dellacherie-meyerB}, Chapter VI, when applied to the  c\`adl\`ag version $c'_t$ of $c_t$, states that
$$\mathbb{E}\left[\int_0^{\hat{T}} D_tdc_t\right]=\mathbb{E}\left[\int_0^\infty D_{T_l}\mathbbm{1}_{\{T_l<{\hat{T}}\}}dl\right].$$
Since the right-hand side is zero, it follows that $D_t=0$, $\mathbb{P}\times dc_t-$almost everywhere.
\end{proof}


\section{Domains}\label{sec:domains}


\subsection{The primal domain}
Recall the definition $\mathcal{C}^\lambda=\mathcal{C}^\lambda(1,0)=\{c\geq 0 \text{ optional}:c\vee\lambda\bar{c}\text{ is 1-admissible}\}$ for $\lambda\in[0,1]$. The following proposition is important for being able to apply the results from \cite{mostovyi} to our optimization problem \eqref{eq:primal-problem}.
\begin{prop}\label{prop:C-bipolar}
For $\lambda\in[0,1]$, the set $\mathcal{C}^\lambda$ is convex, solid, and closed with respect to convergence in (finite) measure $\mathbb{P}\times d\kappa$ on $\Omega\times[0,\hat{T})$.
\end{prop}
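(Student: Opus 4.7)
The plan is to verify the three properties in turn: solidity and convexity are short, and closedness in measure is where the real work lies.

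For solidity, if $0\le c\le c'$ $\mathbb{P}\times d\kappa$-a.e.\ and $c'\in\mathcal{C}^\lambda$, Proposition~\ref{prop:mon-of-sup} gives $\bar{c}\le\bar{c}'$ up to indistinguishability, so $c\vee\lambda\bar{c}\le c'\vee\lambda\bar{c}'$, and $1$-admissibility transfers by monotonicity of $\langle\cdot,Z\rangle$ in its first argument together with \eqref{eq:x-admissible}. For convexity, I would use that $c\mapsto\bar{c}$ is pathwise subadditive and positively homogeneous, which is immediate from Definition~\ref{def:running-max} since $\esssup_{s\in[0,t]}$ has these properties for non-negative Borel functions on $[0,t]$. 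Hence for $c^1,c^2\in\mathcal{C}^\lambda$ and $\alpha\in[0,1]$, $c:=\alpha c^1+(1-\alpha)c^2$ satisfies $\bar{c}\le\alpha\bar{c}^1+(1-\alpha)\bar{c}^2$, and convexity of the pointwise maximum gives
\[
c\vee\lambda\bar{c}\le\alpha(c^1\vee\lambda\bar{c}^1)+(1-\alpha)(c^2\vee\lambda\bar{c}^2),
\]
which is $1$-admissible by linearity of $\langle\cdot,Z\rangle$; monotonicity transfers admissibility to $c\vee\lambda\bar{c}$.

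For closedness in measure, let $c^n\in\mathcal{C}^\lambda$ with $c^n\to c$ in $\mathbb{P}\times d\kappa$-measure, and extract a subsequence along which $c^n\to c$ a.e., equivalently $\mathbb{P}\times\text{Leb}$-a.e.\ by Assumption~\ref{ass:clock}. Since $\langle c^n\vee\lambda\bar{c}^n,Z\rangle\le 1$ for each $Z\in\mathcal{Z}$, Fatou will produce $\langle c\vee\lambda\bar{c},Z\rangle\le 1$, hence $c\in\mathcal{C}^\lambda$ by \eqref{eq:x-admissible}, once one has the pointwise inequality
\[
c\vee\lambda\bar{c}\le\liminf_n(c^n\vee\lambda\bar{c}^n)\quad\mathbb{P}\times d\kappa\text{-a.e.}
\]
Using $\liminf(a^n\vee b^n)\ge(\liminf a^n)\vee(\liminf b^n)$ and $\liminf c^n=c$ a.e., this reduces to the key lemma
\[
\bar{c}_t(\omega)\le\liminf_n\bar{c}^n_t(\omega)\quad\text{for }\mathbb{P}\text{-a.e.\ }\omega\text{ and every }t\in[0,\hat{T}).
\]

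The main obstacle is this lemma, since pointwise (even Leb-a.e.) convergence of the underlying processes does not in general transfer to their essential suprema. I would argue pathwise: fix $\omega$ with $c^n(\omega)\to c(\omega)$ Leb-a.e.\ on $[0,\hat{T})$ and fix $t\in(0,\hat{T})$. For $\epsilon>0$, the set $A_\epsilon:=\{s\in[0,t]:c_s(\omega)>\bar{c}_t(\omega)-\epsilon\}$ has positive Lebesgue measure $m>0$ by Definition~\ref{def:running-max}. Intersecting $A_\epsilon$ with the conull set where $c^n(\omega)\to c(\omega)$, the monotone identity
\[
A_\epsilon\cap\{s:c^n_s(\omega)\to c_s(\omega)\}=\bigcup_{N\ge 1}\bigl\{s\in A_\epsilon:c^n_s(\omega)>\bar{c}_t(\omega)-2\epsilon\text{ for all }n\ge N\bigr\}
\]
together with continuity of Lebesgue measure from below yields some $N$ for which the indexed set has measure greater than $m/2$. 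For every $n\ge N$ this forces $\bar{c}^n_t(\omega)\ge\bar{c}_t(\omega)-2\epsilon$, so $\liminf_n\bar{c}^n_t(\omega)\ge\bar{c}_t(\omega)-2\epsilon$; sending $\epsilon\downarrow 0$ proves the lemma, with the obvious modification (an arbitrary level $M$ in place of $\bar{c}_t(\omega)-\epsilon$) if $\bar{c}_t(\omega)=\infty$. Fatou applied to $\mathbb{E}[\int_0^{\hat{T}}(\cdot)\,Z_t\,d\kappa_t]$ then concludes the proof.
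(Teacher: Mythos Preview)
Your proof is correct. Solidity and convexity are handled essentially as in the paper; the difference lies in the closedness argument.

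For closedness, the paper takes a shorter route: after passing to an a.e.\ convergent subsequence, it replaces $c^n$ by $\tilde c^n:=\inf_{k\ge n}c^k$, which still lies in $\mathcal{C}^\lambda$ by solidity and now \emph{increases} to $c$ pointwise. For monotone increasing sequences it is immediate from Definition~\ref{def:running-max} that $\bar{\tilde c}^n\uparrow\bar c$, so $\tilde c^n\vee\lambda\bar{\tilde c}^n\uparrow c\vee\lambda\bar c$, and monotone convergence in \eqref{eq:x-admissible} finishes. Your approach instead establishes the lower semicontinuity lemma $\bar c_t\le\liminf_n\bar c^n_t$ for arbitrary a.e.\ convergent sequences and then applies Fatou. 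This is a genuinely different and slightly longer path, but the lemma you prove is a natural statement about the running essential supremum in its own right and avoids the monotonization trick; the paper's approach, on the other hand, sidesteps any analysis of $\liminf$ by reducing to the monotone case.
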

\begin{proof} Solidity is obvious. To prove convexity, let $c^1,c^2\in\mathcal{C}^\lambda$. Since $c^1\vee\lambda\bar{c}^1,c^2\vee\lambda\bar{c}^2$ are $1$-admissible, $\theta(c^1\vee\lambda\bar{c}^1)+(1-\theta)(c^2\vee\lambda\bar{c}^2)$ is also $1$-admissible for $\theta\in[0,1]$. The running essential supremum of $\theta c^1+(1-\theta)c^2$ is bounded from above by $\theta\bar{c}^1+(1-\theta)\bar{c}^2$. Hence, both the process $\theta c^1+(1-\theta)c^2$ and $\lambda$ times its running essential supremum are bounded from above by the $1$-admissible process $\theta(c^1\vee\lambda\bar{c}^1)+(1-\theta)(c^2\vee\lambda\bar{c}^2)$, implying that $\theta c^1+(1-\theta)c^2\in\mathcal{C}^\lambda$ and proving convexity.

To prove that $\mathcal{C}^\lambda$ is closed, let $(c^n)_{n\geq1}\subseteq\mathcal{C}^\lambda$ converge to $c$ almost everywhere. After replacing $c^n$ by $\tilde{c}^n:=\inf_{k\geq n}c^k\in\mathcal{C}^\lambda$ and using Proposition \ref{prop:mon-of-sup}, we can assume without loss of generality that the sequence $c^n$ increases to $c$ pointwise on $\Omega\times[0,\hat{T})$. Then it is easy to check directly from Definition \ref{def:running-max} that $\bar{c}^n\uparrow \bar{c}$ pointwise, therefore $c^n\vee\lambda\bar{c}^n\uparrow c\vee\lambda\bar{c}$ pointwise as well, and, by monotone convergence applied to the characterization \eqref{eq:x-admissible} of $x$-admissiblity, $c\vee\lambda\bar{c}$ is $1$-admissible, implying $c\in\mathcal{C}^\lambda$.
\end{proof}


\subsection{Chronological ordering}
The following ordering will be crucial in our definition and characterization of the dual domains. This ordering appears implicitly in \cite{BK} through their definition (11). Borrowing this idea, we solve the singular control problem of \cite{BK} type, corresponding to $\lambda=1$ and $q=0$ in our framework, in what seems to be a more direct way (i.e., with a more straightforward definition of the dual domain and of the dual optimization problem; see Remark \ref{rem:duality-thm}(iii)) and extend the method to constraints given by parameters $\lambda\in(0,1)$ and $q>0$.
\begin{defn}\label{defn:chron-ord}
For a stochastic clock $\kappa$ satisfying Assumption \ref{ass:clock}, we define a \textit{chronological ordering} $\preceq$ on the set of non-negative optional processes on $\Omega\times[0,\hat{T})$ as follows: $\tilde{\delta}\preceq\delta$ if and only if
\begin{equation}\label{def:chron-ord}
^o\left(\int_.^{\hat{T}} \tilde{\delta}d\kappa\right)\leq{}^o\left(\int_.^{\hat{T}} \delta d\kappa\right)\quad \text{up to indistinguishability}.
\end{equation}
\end{defn}
\begin{rem}\label{rem:chron-ord-def}
\begin{enumerate}
\item[(i)] By the Optional Section Theorem and by the definition of optional projection, condition \eqref{def:chron-ord} is equivalent to saying that for every stopping time $T\in[0,\hat{T}]$,
\begin{equation}\label{def:chron-alternative}
\mathbb{E}\left[\left(\int_T^{\hat{T}}\tilde\delta d\kappa\right) \mathbbm{1}_{\{T<{\hat{T}}\}}\Big\vert\mathcal{F}_T\right]\leq \mathbb{E}\left[\left(\int_T^{\hat{T}}\delta d\kappa\right) \mathbbm{1}_{\{T<{\hat{T}}\}}\Big\vert\mathcal{F}_T\right],\quad \mathbb{P}-\text{a.e.}
\end{equation}
For $B\in\mathcal{F}_T$, $T_B(\omega):=\begin{cases}T(\omega)\ \text{if }\omega\in B,\\ {\hat{T}} \text{ otherwise},\end{cases}$ is a stopping time and $\mathbbm{1}_{\{T<{\hat{T}}\}}\cdot\mathbbm{1}_B=\mathbbm{1}_{\{T_B<{\hat{T}}\}}$, therefore, \eqref{def:chron-ord} and \eqref{def:chron-alternative} are equivalent to the following:
\begin{equation}\label{eq:chron-ord-characterization}
\mathbb{E}\left[\int_T^{\hat{T}}\tilde\delta d\kappa\right]\leq \mathbb{E}\left[\int_T^{\hat{T}}\delta d\kappa\right]\quad\text{for every stopping time } T\in[0,{\hat{T}}]
\end{equation}
(where we discarded the indicator function $\mathbbm{1}_{\{T<{\hat{T}}\}}$ because $\int_{\hat{T}}^{\hat{T}}\square d\kappa=0$).
\item[(ii)] If $\tilde{\delta}\leq\delta$, $\mathbb{P}\times d\kappa-$a.e., then $\left(\int_T^{\hat{T}}\tilde{\delta}d\kappa\right)\mathbbm{1}_{\{T<{\hat{T}}\}}\leq\left(\int_T^{\hat{T}}\delta d\kappa\right)\mathbbm{1}_{\{T<{\hat{T}}\}}$, $\mathbb{P}-$a.e., for every stopping time $T\in[0,{\hat{T}}]$, implying that $\tilde{\delta}\preceq\delta$.
\end{enumerate}
\end{rem}
Intuitively, all the processes obtained from $\delta$ by ``moving some of the mass of $\delta d\kappa$ to earlier times'', possibly removing some of the mass, and taking the optional projection afterwards, are $\preceq$-dominated by $\delta$, hence the name ``chronological''. This is best seen in the deterministic case, i.e., on a trivial filtered probability space, when the definition of $\preceq$ turns into comparison of mass of $\tilde\delta d\kappa$ and $\delta d\kappa$ on each of the intervals $(t,{\hat{T}})$ for $t\in[0,{\hat{T}})$. The most subtle application of this intuition of moving mass to earlier in time is in Step 1 of the proof of Lemma \ref{lem:important} below.

The following is the remarkable property of the chronological ordering:
\begin{prop}\label{prop:remarkable-property}
For two non-negative optional processes $\tilde{\delta}$ and $\delta$,
\begin{equation}\label{eq:monotonicity-chron-ord}
\tilde{\delta}\preceq\delta\quad \Leftrightarrow\quad \langle c,\tilde\delta\rangle\leq\langle c,\delta \rangle\text{ for all }c\in\mathcal{C}_{\text{inc}}.
\end{equation}
\end{prop}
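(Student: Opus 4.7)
The plan is to handle the two directions separately, using only the three parts of the preceding proposition on integration against $c\in\mathcal{C}_{\text{inc}}$. The forward direction ($\Rightarrow$) is essentially a Tonelli/integration-by-parts computation combined with the comparison inequality \eqref{eq:comparison-for-int} for integrals against $dc$. The backward direction ($\Leftarrow$) reduces to the observation that the indicator processes of stochastic intervals $(T,\hat{T})$ already lie in $\mathcal{C}_{\text{inc}}$, so testing the hypothesis against them immediately reproduces the stopping-time characterization \eqref{eq:chron-ord-characterization}.

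For $\Rightarrow$ I would fix $c\in\mathcal{C}_{\text{inc}}$, write $c_t=\int_{[0,t)}dc_s$, and swap the order of integration via Tonelli (legal since everything is non-negative) to obtain
\[
\langle c,\delta\rangle=\mathbb{E}\left[\int_0^{\hat{T}}\left(\int_s^{\hat{T}}\delta_t\, d\kappa_t\right)dc_s\right],
\]
and likewise for $\tilde\delta$. Item 1 of the preceding proposition (equation \eqref{eq:int-optional-proj}) then lets me replace each inner integral by its optional projection, giving
\[
\langle c,\delta\rangle=\mathbb{E}\left[\int_0^{\hat{T}}{}^o\!\left(\int_s^{\hat{T}}\delta\, d\kappa\right)dc_s\right],
\]
and analogously for $\tilde\delta$. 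Since $\tilde\delta\preceq\delta$ says precisely that these two optional projections are ordered up to indistinguishability, item 3 (equation \eqref{eq:comparison-for-int}) yields the desired inequality $\langle c,\tilde\delta\rangle\leq\langle c,\delta\rangle$.

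For $\Leftarrow$ I would take an arbitrary stopping time $T:\Omega\to[0,\hat{T}]$ and test with $c_t:=\mathbbm{1}_{(T,\hat{T})}(t)$. This process is predictable (as the indicator of a left-continuous adapted stochastic interval), non-decreasing, left-continuous, and vanishes at $0$, hence lies in $\mathcal{C}_{\text{inc}}$. Because $d\kappa$ is atomless by Assumption~\ref{ass:clock}, one has $\langle c,\delta\rangle=\mathbb{E}\left[\int_T^{\hat{T}}\delta\, d\kappa\right]$ and the analogous identity for $\tilde\delta$; the assumed inequality $\langle c,\tilde\delta\rangle\leq\langle c,\delta\rangle$ is then exactly the stopping-time characterization \eqref{eq:chron-ord-characterization}, which is equivalent to $\tilde\delta\preceq\delta$.

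I do not foresee a serious obstacle: both directions are short. The two main checkpoints are recognizing that the chronological ordering was set up precisely so that the inner integrals appearing after the Tonelli step match its defining inequality, and verifying that indicators of stochastic intervals $(T,\hat{T})$ do belong to $\mathcal{C}_{\text{inc}}$ so that the stopping-time family \eqref{eq:chron-ord-characterization} is actually witnessed by the test set used in $\Leftarrow$.
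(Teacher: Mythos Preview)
Your proposal is essentially the paper's own proof: the same Tonelli/\eqref{eq:IBP}--optional projection \eqref{eq:int-optional-proj}--comparison \eqref{eq:comparison-for-int} chain for ``$\Rightarrow$'', and the same test processes $c=\mathbbm{1}_{(T,\hat{T})}$ for ``$\Leftarrow$''. One small gap: processes in $\mathcal{C}_{\text{inc}}$ are $[0,\infty]$-valued (Definition~\ref{def:C-0-class}), whereas the integration-by-parts identity \eqref{eq:IBP} and the measure $dc$ are only set up in the paper for \emph{finite}-valued $c\in\mathcal{C}_{\text{inc}}$; the paper handles this by first proving the inequality for $c\wedge N$ and then passing to the limit $N\to\infty$ via monotone convergence.
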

In the informal language from above, the most important message of this proposition reduces to the obvious statement that ``moving some of the mass of $\delta d\kappa$ to earlier times can only make the integral of an increasing process with respect to $\delta d\kappa$ smaller''.
\begin{proof}
To show ``$\Leftarrow$'', take $c:=\mathbbm{1}_{(T,{\hat{T}})}$ for every stopping time $T$ and apply the equivalent characterization \eqref{eq:chron-ord-characterization} of $\preceq$. For ``$\Rightarrow$'', when $c$ is finite-valued, we have
\begin{equation*}
\begin{aligned}
\langle c,\tilde\delta\rangle&=\mathbb{E}\left[\int_0^{\hat{T}} c\tilde{\delta} d\kappa\right]\overset{\eqref{eq:IBP}}{=}\mathbb{E}\left[\int_0^{\hat{T}}\left(\int_t^{\hat{T}} \tilde{\delta} d\kappa\right)dc_t\right]\overset{\eqref{eq:int-optional-proj}}{=}\mathbb{E}\left[\int_0^{\hat{T}} {}^o\left(\int_.^{\hat{T}} \tilde{\delta} d\kappa\right)_tdc_t\right]\\
&\overset{\eqref{eq:comparison-for-int}}{\leq}\mathbb{E}\left[\int_0^{\hat{T}} {}^o\left(\int_.^{\hat{T}} \delta d\kappa\right)_tdc_t\right]\overset{\eqref{eq:int-optional-proj},\eqref{eq:IBP}}{=}\mathbb{E}\left[\int_0^{\hat{T}} c\delta d\kappa\right]=\langle c,\delta\rangle.
\end{aligned}
\end{equation*}
An arbitrary $c\in\mathcal{C}_{\text{inc}}$ can be approximated by finite-valued $c\wedge N\in\mathcal{C}_{\text{inc}}$ as $N\to\infty$ and the right-hand side of \eqref{eq:monotonicity-chron-ord} follows by monotone convergence.
\end{proof}
Now we introduce an ordering $\preceq_\lambda$ that will be used to characterize dual domains for $\lambda<1$.
\begin{defn}\label{def:lambda-ordering}
For $\lambda\in[0,1]$ and $\delta,\tilde\delta\geq 0$ optional, we define $\tilde\delta\preceq_\lambda\delta$ to hold if and only if there exists a representation
$\delta=\delta^1+\delta^2$, $\tilde\delta=\tilde\delta^1+\tilde\delta^2$ with $\delta^i,\tilde\delta^i\geq 0$ optional, $i=1,2$, such that
$$\tilde\delta^1\leq\delta^1,\ \mathbb{P}\times d\kappa-\text{a.e.},\quad\text{and}\quad\tilde\delta^2\preceq\lambda\delta^2.$$
\end{defn}
\begin{rem}
Note that $\preceq_1$ coincides with $\preceq$  by Remark \ref{rem:chron-ord-def}(ii) and that $\tilde\delta\preceq_0\delta$ if and only if $\tilde\delta\leq\delta$, $\mathbb{P}\times d\kappa-$a.e. Moreover, $\tilde\delta\preceq_{\lambda_1}\delta$ implies $\tilde\delta\preceq_{\lambda_2}\delta$ for $0\leq\lambda_1\leq\lambda_2\leq 1$. Hence, $\preceq_\lambda$ for $\lambda\in[0,1]$ can be seen as a family of orderings obtained by interpolation between ``$\leq$ up to $\mathbb{P}\times d\kappa-$nullsets'' and $\preceq$.
\end{rem}
Informally, $\tilde\delta\preceq_\lambda\delta$ if, up to taking optional projections, $\tilde\delta$ is obtained from $\delta$ by leaving some mass of $\delta d\kappa$ where it is, ``moving some of its mass to earlier time'' while also multiplying it by $\lambda$ (i.e., the $(1-\lambda)$ fraction of mass gets lost), and removing the rest.

An alternative characterization of $\preceq_\lambda$ is given in the following proposition.
\begin{prop}\label{prop:lambda-ord-alternative}
For $\lambda\in[0,1]$ and $\delta,\tilde\delta\geq 0$ optional,
$$\tilde\delta\preceq_\lambda\delta\quad \Leftrightarrow \quad (\tilde\delta-\delta)\vee 0\preceq \lambda(\delta-\tilde\delta)\vee 0.$$
\end{prop}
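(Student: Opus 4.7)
The easy direction ``$\Leftarrow$'' is handled by exhibiting a canonical decomposition. Assuming $(\tilde\delta-\delta)\vee 0\preceq\lambda(\delta-\tilde\delta)\vee 0$, I set $\tilde\delta^1:=\delta^1:=\tilde\delta\wedge\delta$, $\tilde\delta^2:=(\tilde\delta-\delta)\vee 0$, and $\delta^2:=(\delta-\tilde\delta)\vee 0$. All four are non-negative optional processes; the pointwise identity $x\wedge y+(x-y)\vee 0=x$ gives the required sum decompositions $\delta=\delta^1+\delta^2$ and $\tilde\delta=\tilde\delta^1+\tilde\delta^2$; the condition $\tilde\delta^1\leq\delta^1$ holds with equality; and $\tilde\delta^2\preceq\lambda\delta^2$ is precisely the hypothesis. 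Hence $\tilde\delta\preceq_\lambda\delta$ by Definition~\ref{def:lambda-ordering}.

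For the converse ``$\Rightarrow$'', starting from any representation $\delta=\delta^1+\delta^2$, $\tilde\delta=\tilde\delta^1+\tilde\delta^2$ as in Definition~\ref{def:lambda-ordering}, I plan to obtain $(\tilde\delta-\delta)\vee 0\preceq\lambda(\delta-\tilde\delta)\vee 0$ by the three-step chain
\begin{equation*}
(\tilde\delta-\delta)\vee 0\ \preceq\ (\tilde\delta^2-\delta^2)\vee 0\ \preceq\ \lambda(\delta^2-\tilde\delta^2)\vee 0\ \preceq\ \lambda(\delta-\tilde\delta)\vee 0,
\end{equation*}
together with transitivity of $\preceq$, which is immediate from the stopping-time characterization in Remark~\ref{rem:chron-ord-def}(i). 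The first and third links come from Remark~\ref{rem:chron-ord-def}(ii) applied to pointwise inequalities: writing $\tilde\delta-\delta=(\tilde\delta^1-\delta^1)+(\tilde\delta^2-\delta^2)$ and $\delta^2-\tilde\delta^2=(\delta-\tilde\delta)-(\delta^1-\tilde\delta^1)$, the assumption $\tilde\delta^1\leq\delta^1$ a.e.\ yields $\tilde\delta-\delta\leq\tilde\delta^2-\delta^2$ and $\delta^2-\tilde\delta^2\leq\delta-\tilde\delta$ a.e., and these inequalities are preserved by the monotone map $x\mapsto x\vee 0$.

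The main obstacle is the middle link $(\tilde\delta^2-\delta^2)\vee 0\preceq\lambda(\delta^2-\tilde\delta^2)\vee 0$; this is the heart of the matter and the only point where both $\tilde\delta^2\preceq\lambda\delta^2$ and the constraint $\lambda\leq 1$ are genuinely used. I would verify it directly from the stopping-time characterization \eqref{eq:chron-ord-characterization}: for every stopping time $T\in[0,\hat{T}]$, using the identities $(\tilde\delta^2-\delta^2)\vee 0=\tilde\delta^2-\tilde\delta^2\wedge\delta^2$ and $\delta^2=(\delta^2-\tilde\delta^2)\vee 0+\tilde\delta^2\wedge\delta^2$, followed by the bound $\mathbb{E}\bigl[\int_T^{\hat{T}}\tilde\delta^2\,d\kappa\bigr]\leq\lambda\mathbb{E}\bigl[\int_T^{\hat{T}}\delta^2\,d\kappa\bigr]$, one obtains
\begin{align*}
\mathbb{E}\left[\int_T^{\hat{T}}(\tilde\delta^2-\delta^2)\vee 0\, d\kappa\right]
&\leq\lambda\mathbb{E}\left[\int_T^{\hat{T}}(\delta^2-\tilde\delta^2)\vee 0\, d\kappa\right]\\
&\quad +(\lambda-1)\mathbb{E}\left[\int_T^{\hat{T}}\tilde\delta^2\wedge\delta^2\, d\kappa\right].
\end{align*}
The last term is non-positive because $\lambda\leq 1$, so discarding it delivers the desired $\preceq$-relation and closes the chain.
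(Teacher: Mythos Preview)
Your proof is correct. The ``$\Leftarrow$'' direction is identical to the paper's: both exhibit the canonical decomposition $\tilde\delta^1=\delta^1=\tilde\delta\wedge\delta$, $\tilde\delta^2=(\tilde\delta-\delta)\vee 0$, $\delta^2=(\delta-\tilde\delta)\vee 0$.

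For ``$\Rightarrow$'' the two arguments diverge. The paper runs a single chain of mixed pointwise and $\preceq$ inequalities through an auxiliary quantity $e:=\lambda\bigl\{[(\tilde\delta\wedge\delta)-\tilde\delta^1]\wedge\delta^2\bigr\}$, writing $(\tilde\delta-\delta)\vee 0=\tilde\delta^2-[(\tilde\delta\wedge\delta)-\tilde\delta^1]\leq\tilde\delta^2-e\preceq\lambda\delta^2-e\leq\lambda(\delta-\tilde\delta)\vee 0$. Your route instead factors the problem into three transparent links: two outer pointwise comparisons coming from $\tilde\delta^1\leq\delta^1$, and a self-contained middle lemma ``$a\preceq\lambda b$ with $\lambda\leq 1$ implies $(a-b)\vee 0\preceq\lambda(b-a)\vee 0$'' proved directly from the stopping-time characterization. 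Your decomposition is arguably cleaner and isolates exactly where the hypothesis $\lambda\leq 1$ enters (the sign of the discarded term $(\lambda-1)\mathbb{E}[\int_T^{\hat T}\tilde\delta^2\wedge\delta^2\,d\kappa]$), whereas in the paper's chain that role is played less visibly by the inequality $[(\tilde\delta\wedge\delta)-\tilde\delta^1]\geq\lambda\bigl\{[(\tilde\delta\wedge\delta)-\tilde\delta^1]\wedge\delta^2\bigr\}$. Both arguments tacitly manipulate differences of expectations that could in principle be infinite; neither addresses this, and it is harmless in every application within the paper.
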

\begin{proof}
If $(\tilde\delta-\delta)\vee 0\preceq \lambda(\delta-\tilde\delta)\vee 0$ then the processes $\delta^1:=\tilde\delta^1:=\tilde\delta\wedge\delta$, $\delta^2:=(\delta-\tilde\delta)\vee 0$, and $\tilde\delta^2:=(\tilde\delta-\delta)\vee 0$ satisfy the conditions of Definition \ref{def:lambda-ordering}. Conversely, for any decomposition as in Definition \ref{def:lambda-ordering}, it holds that $\tilde\delta^1\leq \tilde\delta$ and $\tilde\delta^1\leq\delta^1\leq\delta$, hence $\tilde\delta^1\leq \tilde\delta\wedge\delta$ and $\tilde\delta^2\geq (\tilde\delta-\delta)\vee 0$. We can then write
\begin{equation*}
\begin{aligned}
(\tilde\delta-\delta)\vee 0&=\tilde\delta^2-\left[(\tilde\delta\wedge\delta)-\tilde\delta^1\right]\leq \tilde\delta^2- \lambda\left\{\left[(\tilde\delta\wedge\delta)-\tilde\delta^1\right]\wedge \delta^2\right\}\\
&\preceq \lambda\delta^2- \lambda\left\{\left[(\tilde\delta\wedge\delta)-\tilde\delta^1\right]\wedge \delta^2\right\}=\lambda\left[\delta^2+\tilde\delta^1-(\tilde\delta\wedge\delta)\right]\vee 0\\
&\leq \lambda\left[\delta-(\tilde\delta\wedge\delta)\right]\vee 0=\lambda(\delta-\tilde\delta)\vee 0,
\end{aligned}
\end{equation*}
proving the claim.
\end{proof}
Hence, $\tilde\delta\preceq_\lambda\delta$ if and only if the conditions of Definition \ref{def:lambda-ordering} hold with $\delta^1=\tilde\delta^1=\tilde\delta\wedge\delta$, i.e., in the case when we choose not to move all the mass that is possible not to move if we want to obtain $\tilde\delta$ from $\delta$.

The following proposition is an extension of Proposition \ref{prop:remarkable-property} to $\preceq_\lambda$.
\begin{prop}
For $\lambda\in[0,1]$ and $\tilde\delta,\delta\geq 0$ optional,
\begin{equation}\label{eq:monotonicity-lambda-ord}
\tilde{\delta}\preceq_\lambda\delta\quad \Leftrightarrow\quad \langle c,\tilde{\delta}\rangle\leq\langle c,\delta \rangle\text{ for all }c\text{ satisfying \eqref{cond:ddc}}.
\end{equation}
\end{prop}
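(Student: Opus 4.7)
The plan is to derive both implications directly, using Propositions \ref{prop:remarkable-property} and \ref{prop:lambda-ord-alternative} as the main tools. The forward direction is a routine computation exploiting that every $c$ satisfying \eqref{cond:ddc} is sandwiched as $\lambda\bar c\leq c\leq\bar c$ with $\bar c\in\mathcal{C}_{\text{inc}}$; the converse requires building a well-chosen test process in \eqref{cond:ddc} that decouples the positive and negative parts of $\delta-\tilde\delta$.

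For ``$\Rightarrow$'', fix a decomposition $\delta=\delta^1+\delta^2$, $\tilde\delta=\tilde\delta^1+\tilde\delta^2$ witnessing $\tilde\delta\preceq_\lambda\delta$, and let $c$ satisfy \eqref{cond:ddc}. Write $\langle c,\tilde\delta\rangle=\langle c,\tilde\delta^1\rangle+\langle c,\tilde\delta^2\rangle$. The first summand is bounded by $\langle c,\delta^1\rangle$ since $\tilde\delta^1\leq\delta^1$, $\mathbb{P}\times d\kappa$-a.e. For the second, I would chain
\[
\langle c,\tilde\delta^2\rangle\leq\langle\bar c,\tilde\delta^2\rangle\leq\lambda\langle\bar c,\delta^2\rangle=\langle\lambda\bar c,\delta^2\rangle\leq\langle c,\delta^2\rangle,
\]
where the first step uses Proposition \ref{prop:increasing-dom} ($c\leq\bar c$, $\mathbb{P}\times d\kappa$-a.e.), the second uses Proposition \ref{prop:remarkable-property} applied to $\bar c\in\mathcal{C}_{\text{inc}}$ combined with $\tilde\delta^2\preceq\lambda\delta^2$, and the last is \eqref{cond:ddc}. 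Adding the two bounds gives $\langle c,\tilde\delta\rangle\leq\langle c,\delta\rangle$.

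For ``$\Leftarrow$'', Proposition \ref{prop:lambda-ord-alternative} reduces the claim to $(\tilde\delta-\delta)\vee 0\preceq\lambda(\delta-\tilde\delta)\vee 0$, which via Proposition \ref{prop:remarkable-property} further reduces to
\[
\langle c,(\tilde\delta-\delta)\vee 0\rangle\leq\lambda\langle c,(\delta-\tilde\delta)\vee 0\rangle\qquad\text{for every }c\in\mathcal{C}_{\text{inc}}.
\]
Approximating by $c\wedge N$ and passing to the limit by monotone convergence, I may assume $c$ is finite-valued. Set $A:=\{\tilde\delta>\delta\}$, an optional set, and define the test process
\[
c':=c\mathbbm{1}_A+\lambda c\mathbbm{1}_{A^c}.
\]
Then $c'\leq c$, hence $\bar{c'}\leq\bar c=c$ (using $\bar c=c$ for $c\in\mathcal{C}_{\text{inc}}$, which follows from left-continuity and monotonicity), and $c'\geq\lambda c\geq\lambda\bar{c'}$, so $c'$ satisfies \eqref{cond:ddc}. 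Expanding the hypothesis $\langle c',\delta-\tilde\delta\rangle\geq 0$ along $\Omega\times[0,\hat T)=A\sqcup A^c$ and noting that $(\delta-\tilde\delta)\mathbbm{1}_A=-(\tilde\delta-\delta)\vee 0$ while $(\delta-\tilde\delta)\mathbbm{1}_{A^c}=(\delta-\tilde\delta)\vee 0$ yields
\[
-\langle c,(\tilde\delta-\delta)\vee 0\rangle+\lambda\langle c,(\delta-\tilde\delta)\vee 0\rangle\geq 0,
\]
which is the required inequality.

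The main obstacle is the design of $c'$: the cut between the values $c$ and $\lambda c$ must track the sign pattern of $\delta-\tilde\delta$ so that (i) $c'\geq\lambda\bar{c'}$ is preserved, placing $c'$ in the admissible test class for the hypothesis, and (ii) the reweighted pairing produces the asymmetric factor $\lambda$ on exactly the positive-part term. The finite-valued reduction $c\wedge N\uparrow c$ is only technical, ensuring all pairings are finite so the hypothesis can be rearranged.
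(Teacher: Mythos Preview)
Your proof is correct and essentially coincides with the paper's. The forward direction is identical. For the converse, the paper works directly with the stopping-time characterization \eqref{eq:chron-ord-characterization} of $\preceq$ rather than going through Proposition~\ref{prop:remarkable-property}, but its test process $(\mathbbm{1}_{\{\tilde\delta^2>0\}}+\lambda\mathbbm{1}_{\{\tilde\delta^2=0\}})\mathbbm{1}_{(T,\hat T)}$ is exactly your $c'=c\mathbbm{1}_A+\lambda c\mathbbm{1}_{A^c}$ specialized to $c=\mathbbm{1}_{(T,\hat T)}\in\mathcal{C}_{\text{inc}}$, so the two routes amount to the same argument.
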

\begin{proof}
For ``$\Rightarrow$'', let $\delta^i,\tilde\delta^i$, $i=1,2$, be as in Definition \ref{def:lambda-ordering}. Then for all $c$ satisfying \eqref{cond:ddc} we have
\begin{equation*}
\langle c,\tilde\delta^2 \rangle\leq\langle \bar{c},\tilde\delta^2 \rangle\overset{\eqref{eq:monotonicity-chron-ord}}{\leq}\langle \bar{c},\lambda\delta^2 \rangle=\langle\lambda\bar{c},\delta^2\rangle\leq\langle c,\delta^2 \rangle.
\end{equation*}
The right-hand side of \eqref{eq:monotonicity-lambda-ord} now follows by adding the above inequality and $\langle c,\tilde\delta^1 \rangle\leq\langle c,\delta^1 \rangle$. To prove ``$\Leftarrow$'', we define $\delta^1=\tilde\delta^1=\tilde\delta\wedge\delta$, $\delta^2=(\delta-\tilde\delta)\vee 0$, and $\tilde\delta^2=(\tilde\delta-\delta)\vee 0$, as in Proposition \ref{prop:lambda-ord-alternative}, and show that $\tilde\delta^2\preceq\lambda\delta^2$ by testing the right-hand side of \eqref{eq:monotonicity-lambda-ord} with suitable $c$'s. The right-hand side of \eqref{eq:monotonicity-lambda-ord} implies that
\begin{equation}\label{eq:lambda-ord-aux}
\langle c,\tilde\delta^2\rangle\leq\langle c,\delta^2 \rangle\text{ for all }c\text{ satisfying \eqref{cond:ddc}}.
\end{equation}
For every stopping time $T\in[0,{\hat{T}}]$, we define $c:=(\mathbbm{1}_{\{\tilde\delta^2>0\}}+\lambda\mathbbm{1}_{\{\tilde\delta^2=0\}})\mathbbm{1}_{(T,{\hat{T}})}$. Since $c\geq\lambda\geq\lambda\bar{c}$ on $(T,{\hat{T}})$, it is easy to see that $c$ satisfies \eqref{cond:ddc}. Therefore, \eqref{eq:lambda-ord-aux} implies that for all stopping times $T\in[0,{\hat{T}}]$,
$$\mathbb{E}\left[\int_T^{\hat{T}} \tilde\delta^2d\kappa\right]=\langle c,\tilde\delta^2\rangle\leq \langle c,\delta^2\rangle=\mathbb{E}\left[\int_T^{\hat{T}} \lambda\delta^2 d\kappa\right],$$
where we have used that $\{\delta^2>0\}\subseteq\{\tilde\delta^2=0\}$. By \eqref{eq:chron-ord-characterization}, this means that $\tilde\delta^2\preceq\lambda\delta^2$.
\end{proof}


\subsection{An important lemma}
\begin{lem}\label{lem:important}
For all $\lambda\in[0,1]$ and $c\geq 0$ optional, the following holds:
\begin{equation}\label{eq:important}
\langle c\vee\lambda\bar{c},\delta\rangle=\sup_{\tilde{\delta}\preceq_\lambda \delta}\langle c,\tilde{\delta}\rangle\quad\text{for all}\quad\delta\geq0\text{ optional}.
\end{equation}
\end{lem}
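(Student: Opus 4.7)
I prove the two inequalities in \eqref{eq:important} separately, and the first one is essentially a corollary of \eqref{eq:monotonicity-lambda-ord}.

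\textbf{The $\geq$ direction.} I first observe that the running essential supremum of $c \vee \lambda \bar{c}$ equals $\bar{c}$: indeed, $c \leq c \vee \lambda \bar{c} \leq \bar{c}$ (using $c \leq \bar{c}$ from Proposition \ref{prop:increasing-dom} and $\lambda \leq 1$), so by Proposition \ref{prop:mon-of-sup} and because $\bar{c} \in \mathcal{C}_{\text{inc}}$ is its own running essential supremum, $\overline{c \vee \lambda \bar c} = \bar c$. Therefore $c \vee \lambda \bar{c} \geq \lambda \bar{c} = \lambda \overline{c \vee \lambda \bar{c}}$, i.e., $c \vee \lambda \bar c$ satisfies \eqref{cond:ddc}. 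Applying \eqref{eq:monotonicity-lambda-ord} with the test process $c \vee \lambda \bar{c}$ gives $\langle c \vee \lambda \bar{c}, \tilde{\delta}\rangle \leq \langle c \vee \lambda \bar{c}, \delta\rangle$ for every $\tilde{\delta} \preceq_\lambda \delta$. Combined with the trivial bound $\langle c, \tilde{\delta}\rangle \leq \langle c \vee \lambda \bar{c}, \tilde{\delta}\rangle$, this yields $\sup_{\tilde{\delta} \preceq_\lambda \delta}\langle c, \tilde{\delta}\rangle \leq \langle c \vee \lambda \bar{c}, \delta\rangle$.

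\textbf{The $\leq$ direction.} Fix $\varepsilon > 0$; I need to exhibit $\tilde{\delta} \preceq_\lambda \delta$ with $\langle c, \tilde{\delta}\rangle \geq \langle c \vee \lambda \bar{c}, \delta\rangle - \varepsilon$. Let $A := \{c \geq \lambda \bar{c}\} \in \mathcal{O}$ and write $\delta = \delta \mathbbm{1}_A + \delta \mathbbm{1}_{A^c}$. Since $c \vee \lambda \bar{c} = c$ on $A$, I keep $\tilde{\delta}^1 := \delta^1 := \delta \mathbbm{1}_A$. By Definition \ref{def:lambda-ordering}, it remains to produce $\tilde{\delta}^2 \preceq \lambda \delta \mathbbm{1}_{A^c}$ with $\langle c, \tilde{\delta}^2\rangle \geq \langle \lambda \bar{c}, \delta \mathbbm{1}_{A^c}\rangle - \varepsilon$. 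Writing $\eta := \lambda \delta \mathbbm{1}_{A^c}$, this reduces to the following sub-claim:
\begin{equation*}
\text{for every non-negative optional }\eta\text{ and every }\varepsilon>0\text{ there is }\tilde{\delta}^2 \preceq \eta\text{ with }\langle c, \tilde{\delta}^2\rangle \geq \langle \bar{c}, \eta\rangle - \varepsilon.
\end{equation*}

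\textbf{Transport construction for the sub-claim.} The idea is to move the mass of $\eta\, d\kappa$ backward in time to locations where $c$ is close to the current running essential supremum; such locations exist on every path by the defining property of the essential debut $\tau^l$ from Section \ref{sec:ess-debut}. Concretely, I discretize the level axis by $L_n := n\varepsilon'$ and approximate $\bar{c}_t$ by $\sum_n L_n \mathbbm{1}_{(\tau^{L_n}, \tau^{L_{n+1}}]}(t)$, which differs from $\bar{c}_t$ by at most $\varepsilon'$ in view of Proposition \ref{prop:second-def}. For each level $L_n$, the $\eta$-mass carried on $\{t > \tau^{L_n}\}$ gets allocated onto the set $\{s \geq \tau^{L_n} : c_s \geq L_n\}$, which has strictly positive Lebesgue measure on every path by construction of $\tau^{L_n}$; summing over $n$ and taking the optional projection of the resulting random measure produces an optional process $\tilde{\delta}^2$. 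The chronological ordering $\tilde{\delta}^2 \preceq \eta$ is then verified via \eqref{eq:chron-ord-characterization}: at every stopping time $T$ the $d\kappa$-mass of $\tilde{\delta}^2$ on $[T, \hat{T})$ is bounded by the corresponding mass of $\eta$ because each increment of mass has only been displaced to an earlier time. Sending $\varepsilon' \to 0$ and, if necessary, truncating $\eta$ at level $N$ and invoking monotone convergence closes the gap.

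\textbf{Main obstacle.} The technical core is the transport construction. Three subtleties conspire: (i) one must secure the chronological-ordering inequality at \emph{every} stopping time $T$, which requires a layer-by-layer allocation controlled through the Optional Section Theorem; (ii) the redistributed mass must yield an \emph{optional} process, forcing a passage through the optional projection and its compatibility with the integrations in \eqref{eq:int-optional-proj}--\eqref{eq:IBP}; and (iii) the approximation error must be controlled uniformly so that the limit $\varepsilon' \to 0$ is legitimate. Once the sub-claim is established, the combination $\tilde{\delta} := \tilde{\delta}^1 + \tilde{\delta}^2 \preceq_\lambda \delta$ completes the proof by the bookkeeping above.
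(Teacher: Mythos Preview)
Your $\geq$ direction is correct and identical to the paper's. Your reduction from general $\lambda$ to the $\lambda=1$ sub-claim, via the split on $A=\{c\geq\lambda\bar c\}$, is clean and valid; it differs from the paper, which instead keeps $\lambda$ general but builds up $c$ from indicators.

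The gap is in the sub-claim itself. What you call the ``transport construction'' is precisely the $\lambda=1$ case of the lemma for general $c$, and you do not prove it: you describe a layer-cake allocation, then in your ``Main obstacle'' paragraph list three subtleties and stop. Two concrete issues: (a) your description moves ``the $\eta$-mass carried on $\{t>\tau^{L_n}\}$'' at every level $n$, but these sets are nested, so the same mass is allocated multiple times; you need the mass on the \emph{increment} $(\tau^{L_n},\tau^{L_{n+1}}]$ for additivity of $\preceq$ to give $\sum_n\tilde\delta^{2,n}\preceq\eta$; (b) even with that fix, you have not verified \eqref{eq:chron-ord-characterization} for the sum, nor controlled the approximation error when passing to the optional projection. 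So the proof is incomplete exactly at the point where the real work lies.

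The paper avoids this by a different organization: it first proves the identity for $c=\mathbbm{1}_A$ and $\lambda=1$ with an explicit one-step construction (push all mass of $\delta$ on $(\tau_A^\varepsilon,\hat T)$ onto the set $(\tau_A,\tau_A^\varepsilon)\cap A$, which has positive $d\kappa$-measure, and take the optional projection; the verification of $\preceq$ is then a two-line estimate). It then extends to general $\lambda$ for indicators, to finite sums with disjoint supports, and finally to arbitrary $c$ by monotone convergence. Each step is short, and no infinite layer sum ever needs to be controlled directly. Your layer-cake idea is in principle workable and is morally the same decomposition viewed through the level sets of $\bar c$ rather than the simple-function approximation of $c$, but as written it is a sketch, not a proof.
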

Informally, in order to achieve the value of  $\langle c,\tilde{\delta}\rangle$ close to the value on the left-hand side of \eqref{eq:important}, we obtain $\tilde\delta d\kappa$ by splitting the mass of $\delta d\kappa$ into two parts: (i) if $c_t\geq\lambda\bar{c}_t$ then we leave the mass of $\delta d\kappa$ at time $t$ where it is (i.e., the corresponding part of $\delta$ goes to $\delta^1$ of Definition~\ref{def:lambda-ordering}), (ii) if $c_t<\lambda\bar{c}_t$ then we move the mass of $\delta d\kappa$ at time $t$ (i.e., the corresponding part of $\delta$ goes to $\delta^2$ of Definition~\ref{def:lambda-ordering}) to an earlier time where $c$ is close to its current running essential supremum $\bar{c}_t$, with the penalty of $(1-\lambda)$ fraction of the mass moved. 
\begin{proof}
The inequality ``$\geq$'' holds since for every $\tilde{\delta}\preceq_\lambda\delta$,
$$\langle c,\tilde\delta \rangle \leq \langle c\vee\lambda\bar{c},\tilde\delta\rangle\leq\langle c\vee\lambda\bar{c},\delta \rangle,$$
where the second inequality holds by \eqref{eq:monotonicity-lambda-ord}, since $c\vee\lambda\bar{c}$ satisfies \eqref{cond:ddc}. The opposite inequality ``$\leq$'' in \eqref{eq:important} will be proved in four steps.
\begin{enumerate}
\item[\textbf{Step 1.}] \textbf{$\mathbf{\lambda=1}$ and $\mathbf{c=\mathbbm{1}_A}$, where $\mathbf{A\in\mathcal{O}}$.} Since $c\vee\bar{c}=\bar{c}$, $\mathbb{P}\times d\kappa-$a.e., by Proposition~\ref{prop:increasing-dom}, we are proving that
$\langle\bar{c},\delta \rangle\leq\sup_{\tilde{\delta}\preceq\delta}\langle c,\tilde{\delta}\rangle$ for every optional process $\delta\geq 0$.
Let $\tau_A$ be the essential debut of $A$. Then $\tau_A$ is a stopping time by Proposition~\ref{prop:stopping-time} applied with $l=1$, and $\bar{c}_t=\mathbbm{1}_{(\tau_A,\hat{T})}(t)$ by the definition of $\bar{c}$.

Let us fix a non-negative optional process $\delta$. For an arbitrary $\varepsilon>0$ and for $\tau_A^\varepsilon:=(\tau_A+\varepsilon)\wedge\hat{T}$, the $\omega$-section of the set $(\tau_A,\tau_A^\varepsilon)\cap A$ has a positive Borel measure, and hence a positive measure $d\kappa$, as long as $\tau_A(\omega)<\hat{T}$. Therefore, a non-negative optional process $\delta^\varepsilon$ defined as the optional projection of the non-negative jointly measurable process
\begin{equation}\label{eq:delta-eps-def}
\begin{cases}
\frac{\mathbbm{1}_{(\tau_A,\tau_A^\varepsilon)\cap A}}{d\kappa((\tau_A,\tau_A^\varepsilon)\cap A)}\int_{\tau_A^\varepsilon}^{\hat{T}}\delta d\kappa,\quad\text{if }\tau_A<\hat{T},\\
0,\quad\text{otherwise},
\end{cases}
\end{equation}
is well-defined. Note that
\begin{equation*}
\mathbb{E}\left[\int_0^{\hat{T}} \mathbbm{1}_A\delta^\varepsilon d\kappa\right]=\mathbb{E}\left[\int_0^{\hat{T}} \mathbbm{1}_A\frac{\mathbbm{1}_{(\tau_A,\tau_A^\varepsilon)\cap A}}{d\kappa((\tau_A,\tau_A^\varepsilon)\cap A)}\left(\int_{\tau_A^\varepsilon}^{\hat{T}}\delta d\kappa\right) d\kappa\right]=\mathbb{E}\left[\int_{\tau_A^\varepsilon}^{\hat{T}}\delta d\kappa\right].
\end{equation*}
As $\varepsilon\downarrow 0$,
\begin{equation}\label{eq:eps-conv}
\langle c,\delta^\varepsilon \rangle=\mathbb{E}\left[\int_{\tau_A^\varepsilon}^{\hat{T}}\delta d\kappa\right]\nearrow\mathbb{E}\left[\int_{\tau_A}^{\hat{T}}\delta d\kappa\right]=\langle \bar{c},\delta \rangle,
\end{equation}
which proves the desired inequality as soon as we show that $\delta^\varepsilon\preceq\delta$ for all $\varepsilon>0$. As Remark \ref{rem:chron-ord-def}(i) says, it is sufficient to prove that for every stopping time $T$,
\begin{equation}\label{eq:delta-epsilon2}
\mathbb{E}\left[\int_T^{\hat{T}}\delta^\varepsilon d\kappa\right]\leq \mathbb{E}\left[\int_T^{\hat{T}}\delta d\kappa\right].
\end{equation}
Finally, \eqref{eq:delta-epsilon2} follows from
\begin{equation*}
\begin{aligned}
\mathbb{E}\left[\int_T^{\hat{T}}\delta^\varepsilon d\kappa\right]&=\mathbb{E}\left[\int_0^{\hat{T}}\mathbbm{1}_{[T,{\hat{T}})}\frac{\mathbbm{1}_{(\tau_A,\tau_A^\varepsilon)\cap A}}{d\kappa((\tau_A,\tau_A^\varepsilon)\cap A)}\left(\int_{\tau_A^\varepsilon}^{\hat{T}}\delta d\kappa\right)d\kappa\right]\\
&=\mathbb{E}\left[\frac{d\kappa((\tau_A,\tau_A^\varepsilon)\cap A\cap[T,{\hat{T}}))}{d\kappa((\tau_A,\tau_A^\varepsilon)\cap A)}\left(\int_{\tau_A^\varepsilon}^{\hat{T}}\delta d\kappa\right)\right]\\
&\leq \mathbb{E}\left[0\cdot \mathbbm{1}_{\{T\geq\tau_A^\varepsilon\}}\right]+\mathbb{E}\left[\left(\int_{\tau_A^\varepsilon}^{\hat{T}}\delta d\kappa\right)\mathbbm{1}_{\{T<\tau_A^\varepsilon\}}\right]\leq \mathbb{E}\left[\int_T^{\hat{T}}\delta d\kappa\right].\\
\end{aligned}
\end{equation*}
\item[\textbf{Step 2.}] \textbf{$\mathbf{\lambda\in[0,1]}$ and $\mathbf{c=\mathbbm{1}_A}$, where $\mathbf{A\in\mathcal{O}}$.} Let $\tau_A$ be the essential debut of $A$. Then $$\bar{c}=\mathbbm{1}_{(\tau_A,{\hat{T}})}\quad \text{and}\quad c\vee\lambda\bar{c}=\mathbbm{1}_{A}+\lambda\mathbbm{1}_{(\tau_A,{\hat{T}})}\mathbbm{1}_{A^c}.$$ We fix a non-negative optional process $\delta$, split it into $\delta^1=\delta\mathbbm{1}_{A}$ and $\delta^2=\delta\mathbbm{1}_{A^c}$, and define $\delta^\varepsilon=\tilde\delta^1+\tilde\delta^{2,\varepsilon}$ for $\varepsilon>0$ as follows: $\tilde\delta^1=\delta\mathbbm{1}_{A}$ and $\tilde\delta^{2,\varepsilon}$ is constructed as in Step~1 as the optional projection of \eqref{eq:delta-eps-def} for $c=\mathbbm{1}_{A}$ and $\delta$ replaced with $\lambda\delta\mathbbm{1}_{A^c}$. This ensures that (i) by \eqref{eq:delta-epsilon2}, $\tilde\delta^{2,\varepsilon}\preceq\lambda\delta\mathbbm{1}_{A^c}$, in particular, $\delta^\varepsilon=\tilde\delta^1+\tilde\delta^{2,\varepsilon}\preceq_\lambda \delta^1+\delta^2=\delta$, and (ii) by \eqref{eq:eps-conv},
\begin{equation*}
\langle c,\delta^\varepsilon \rangle=\langle c, \delta\mathbbm{1}_{A}+\tilde\delta^{2,\varepsilon} \rangle\nearrow\langle c,\delta\mathbbm{1}_{A}\rangle+\langle\bar{c},\lambda\delta\mathbbm{1}_{A^c} \rangle=\langle \mathbbm{1}_{A}+\mathbbm{1}_{(\tau_A,{\hat{T}})}\lambda\mathbbm{1}_{A^c},\delta \rangle=\langle c\vee\lambda\bar{c},\delta \rangle
\end{equation*}
as $\varepsilon\downarrow 0$, completing the proof of inequality ``$\leq$'' in \eqref{eq:important} for this case.

\item[\textbf{Step 3.}] \textbf{Sum of two optional processes with disjoint supports.} Let $\lambda\in[0,1]$ and let $c^1,c^2$ be two non-negative processes with disjoint supports for which \eqref{eq:important} holds. In this case, it follows directly from the definition of running essential supremum that $\overline{c^1+c^2}=\overline{(c^1\vee c^2)}=\bar{c}^1\vee\bar{c}^2$, and therefore
$$(c^1+c^2)\vee\lambda\overline{(c^1+c^2)}=(c^1\vee c^2)\vee\lambda(\bar{c}^1\vee\bar{c}^2)=(c^1\vee\lambda\bar{c}^1)\vee(c^2\vee\lambda\bar{c}^2).$$
We denote $c^{\lambda,i}:=c^i\vee\lambda\bar{c}^i$ for $i=1,2$ and obtain
\begin{equation*}
\begin{aligned}
\langle(c^1+c^2)\vee\lambda\overline{(c^1+c^2)},\delta\rangle&=\langle c^{\lambda,1}\vee c^{\lambda,2},\delta \rangle=\langle c^{\lambda,1},\delta\mathbbm{1}_{\{c^{\lambda,1}>c^{\lambda,2}\}} \rangle+\langle c^{\lambda,2},\delta\mathbbm{1}_{\{c^{\lambda,1}\leq c^{\lambda,2}\}} \rangle\\
&\overset{\eqref{eq:important}}{=}\sup\left\{\langle c^1,\tilde\delta^1 \rangle:\ \tilde\delta^1\preceq_\lambda \delta\mathbbm{1}_{\left\{c^{\lambda,1}>c^{\lambda,2}\right\}}\right\}\\
&\quad\quad+\sup\left\{\langle c^2,\tilde\delta^2 \rangle:\ \tilde\delta^2\preceq_\lambda \delta\mathbbm{1}_{\left\{c^{\lambda,1}\leq c^{\lambda,2}\right\}}\right\}\\
&\leq\sup_{\tilde\delta^1,\tilde\delta^2}\langle c^1+c^2,\tilde\delta^1+\tilde\delta^2\rangle \leq\sup_{\tilde\delta\preceq_\lambda\delta}\langle c^1+c^2,\tilde\delta \rangle,
\end{aligned}
\end{equation*}
where the last inequality holds by the additive property of $\preceq_\lambda$:
$$\tilde\delta^1\preceq_\lambda \delta\mathbbm{1}_{\left\{c^{\lambda,1}>c^{\lambda,2}\right\}}\text{ and }\tilde\delta^2\preceq_\lambda \delta\mathbbm{1}_{\left\{c^{\lambda,1}\leq c^{\lambda,2}\right\}}\quad\Rightarrow\quad \tilde\delta:=\tilde\delta^1+\tilde\delta^2\preceq_\lambda \delta.$$
Comparing the first and the last terms in the above sequence of equalities and inequalities shows that ``$\leq$'' in \eqref{eq:important} holds for $c^1+c^2$.
\item[\textbf{Step 4.}] \textbf{Monotone convergence.} Let $(c^n)_{n\geq1}$ be an increasing sequence of non-negative optional processes such that $c^n\uparrow c$ and
$$\langle c^n\vee\lambda\bar{c}^n,\delta \rangle\leq\sup_{\tilde{\delta}\preceq_\lambda \delta}\langle c^n,\tilde{\delta}\rangle\quad \text{for every } n.$$
Then since $\bar{c}^n\uparrow \bar{c}$, and therefore $c^n\vee\lambda\bar{c}^n\uparrow c\vee\lambda\bar{c}$, monotone convergence implies \eqref{eq:important}.
\end{enumerate}
Combining the results of Steps 2 and 3 shows that the conclusion of the lemma holds for all $\lambda\in[0,1]$ in case when $c$ is a non-negative simple $\mathcal{O}$-measurable function on $\Omega\times[0,\hat{T})$. Step~4 allows to conclude the proof for an arbitrary non-negative optional process $c$.
\end{proof}


\subsection{The dual domain}

For $\lambda\in[0,1]$, we define the dual domain $\mathcal{D}^\lambda$ as the polar set of $\mathcal{C}^\lambda$ in $L_+^0(\Omega\times[0,\hat{T}),\mathcal{O},\mathbb{P}\times d\kappa)$ in the sense of \cite{bipolar}: $$\mathcal{D}^\lambda:=(\mathcal{C}^\lambda)^\circ=\left\{\delta\geq 0\text{ optional}: \langle c,\delta \rangle\leq 1\text{ for all }c\in\mathcal{C}^\lambda\right\},$$ 
and denote $\mathcal{D}^\lambda(y)=y\cdot\mathcal{D}^\lambda$ for $y>0$. By the bipolar theorem (\cite{bipolar}) and Proposition \ref{prop:C-bipolar}, we have $\mathcal{C}^\lambda=(\mathcal{D}^\lambda)^\circ$. As a polar set, $\mathcal{D}^\lambda$ is solid, convex, and closed. Furthermore, $\mathcal{D}^\lambda$ has the following characterization in terms of $\mathcal{Z}$:
\begin{prop}\label{prop:min-of-D}
The set $\mathcal{D}^\lambda$ is $\preceq_\lambda$-solid, i.e., if $\delta\in\mathcal{D}^\lambda$ and $\tilde{\delta}\preceq_\lambda\delta$ then $\tilde\delta\in\mathcal{D}^\lambda$. Moreover, it is the smallest set in $L_+^0(\Omega\times[0,\hat{T}),\mathcal{O},\mathbb{P}\times d\kappa)$ that is convex, closed, $\preceq_\lambda$-solid, and contains $\mathcal{Z}$.
\end{prop}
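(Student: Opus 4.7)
The plan is to handle the two assertions separately. For the $\preceq_\lambda$-solidity of $\mathcal{D}^\lambda$, the key inputs are Proposition~\ref{prop:C-is-closed} and the characterization \eqref{eq:monotonicity-lambda-ord} of $\preceq_\lambda$. Given $\delta\in\mathcal{D}^\lambda$, $\tilde\delta\preceq_\lambda\delta$, and an arbitrary $c\in\mathcal{C}^\lambda$, I would chain
$$\langle c,\tilde\delta\rangle\leq\langle c\vee\lambda\bar c,\tilde\delta\rangle\leq\langle c\vee\lambda\bar c,\delta\rangle\leq 1,$$
where the first inequality is pointwise monotonicity, the second uses \eqref{eq:monotonicity-lambda-ord} applied to $c\vee\lambda\bar c$ (which satisfies \eqref{cond:ddc}) together with $\tilde\delta\preceq_\lambda\delta$, and the third combines $c\vee\lambda\bar c\in\mathcal{C}^\lambda$ from Proposition~\ref{prop:C-is-closed} with $\delta\in\mathcal{D}^\lambda$. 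Since $c$ was arbitrary, $\tilde\delta\in\mathcal{D}^\lambda$.

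For the minimality assertion, I would first verify $\mathcal{Z}\subseteq\mathcal{D}^\lambda$: for $Z\in\mathcal{Z}$ and $c\in\mathcal{C}^\lambda$, the $1$-admissibility of $c\vee\lambda\bar c$ combined with \eqref{eq:x-admissible} yields $\langle c,Z\rangle\leq\langle c\vee\lambda\bar c,Z\rangle\leq 1$. Being a polar set, $\mathcal{D}^\lambda$ is automatically convex and closed, and is $\preceq_\lambda$-solid by the first part, so it is \emph{one} of the sets considered in the minimality statement. Denoting by $\hat{\mathcal{D}}$ the intersection of all such sets, this gives $\hat{\mathcal{D}}\subseteq\mathcal{D}^\lambda$. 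The harder inclusion $\mathcal{D}^\lambda\subseteq\hat{\mathcal{D}}$ will come from the bipolar theorem of \cite{bipolar} applied to $\hat{\mathcal{D}}$.

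A crucial preliminary observation is that $\preceq_\lambda$-solidity implies ordinary (pointwise) solidity: if $\tilde\delta\leq\delta$ pointwise, then by Proposition~\ref{prop:lambda-ord-alternative} we have $(\tilde\delta-\delta)\vee 0=0\preceq\lambda(\delta-\tilde\delta)\vee 0$, so $\tilde\delta\preceq_\lambda\delta$. Therefore $\hat{\mathcal{D}}$ is convex, closed, and solid, and the bipolar theorem yields $(\hat{\mathcal{D}}^\circ)^\circ=\hat{\mathcal{D}}$. It is thus enough to prove $\hat{\mathcal{D}}^\circ\subseteq\mathcal{C}^\lambda$, because then
$$\mathcal{D}^\lambda=(\mathcal{C}^\lambda)^\circ\subseteq(\hat{\mathcal{D}}^\circ)^\circ=\hat{\mathcal{D}}.$$
To establish $\hat{\mathcal{D}}^\circ\subseteq\mathcal{C}^\lambda$, fix $c\in\hat{\mathcal{D}}^\circ$ and any $Z\in\mathcal{Z}$. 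By Lemma~\ref{lem:important} applied with $\delta:=Z$,
$$\langle c\vee\lambda\bar c,Z\rangle=\sup_{\tilde\delta\preceq_\lambda Z}\langle c,\tilde\delta\rangle.$$
For every such $\tilde\delta$, the $\preceq_\lambda$-solidity of $\hat{\mathcal{D}}$ applied to $Z\in\mathcal{Z}\subseteq\hat{\mathcal{D}}$ forces $\tilde\delta\in\hat{\mathcal{D}}$, hence $\langle c,\tilde\delta\rangle\leq 1$. Taking the supremum gives $\langle c\vee\lambda\bar c,Z\rangle\leq 1$ for every $Z\in\mathcal{Z}$, which by \eqref{eq:x-admissible} is exactly the $1$-admissibility of $c\vee\lambda\bar c$, so $c\in\mathcal{C}^\lambda$.

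The main obstacle is arranging the polar-of-polar bookkeeping and recognizing that Lemma~\ref{lem:important} is precisely the tool needed to convert pairings of $c$ against the enlarged set $\{\tilde\delta:\tilde\delta\preceq_\lambda Z\}$ back into $1$-admissibility of $c\vee\lambda\bar c$; once this is in place, the classical bipolar theorem closes the argument because $\preceq_\lambda$-solidity is a strengthening of ordinary solidity.
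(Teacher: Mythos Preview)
Your proof is correct and follows essentially the same approach as the paper: both establish $\preceq_\lambda$-solidity via the chain $\langle c,\tilde\delta\rangle\leq\langle c\vee\lambda\bar c,\tilde\delta\rangle\leq\langle c\vee\lambda\bar c,\delta\rangle\leq 1$, and both prove minimality by showing $(\hat{\mathcal D})^\circ\subseteq\mathcal C^\lambda$ via Lemma~\ref{lem:important} and then invoking bipolarity. Your version is slightly more explicit in two places the paper leaves implicit---the verification that $\mathcal Z\subseteq\mathcal D^\lambda$ and the observation that $\preceq_\lambda$-solidity implies ordinary solidity (needed for the bipolar theorem to apply to $\hat{\mathcal D}$)---but the architecture is identical.
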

\begin{proof}
Let $\delta\in\mathcal{D}^\lambda$ and $\tilde{\delta}\preceq_\lambda\delta$. For every $c\in\mathcal{C}^\lambda$, $c\vee\lambda\bar{c}$ satisfies \eqref{cond:ddc} and belongs to $\mathcal{C}^\lambda$, and therefore
$$\langle c,\tilde{\delta} \rangle\leq\langle c\vee\lambda\bar{c},\tilde{\delta} \rangle\overset{\eqref{eq:monotonicity-lambda-ord}}{\leq}\langle c\vee\lambda\bar{c},\delta \rangle\leq 1,$$
implying $\tilde\delta\in(\mathcal{C}^\lambda)^\circ=\mathcal{D}^\lambda$, i.e., $\mathcal{D}^\lambda$ is $\preceq_\lambda$-solid. Let $\mathcal{D}'$ be the smallest set in $L_+^0(\Omega\times[0,\hat{T}),\mathcal{O},\mathbb{P}\times d\kappa)$ that is convex, closed, $\preceq_\lambda$-solid, and contains $\mathcal{Z}$. Clearly, $\mathcal{D}'\subseteq\mathcal{D}^\lambda$. We will show $\mathcal{D}^\lambda\subseteq\mathcal{D}'$ by proving that $(\mathcal{D}')^\circ\subseteq(\mathcal{D}^\lambda)^\circ=\mathcal{C}^\lambda$. Fix $c\in(\mathcal{D}')^\circ$. By Lemma \ref{lem:important} and since $\mathcal{D}'$ is $\preceq_\lambda$-solid and contains $\mathcal{Z}$, we obtain
$$\langle c\vee\lambda\bar{c},Z \rangle=\sup_{\tilde\delta\preceq_\lambda Z}\langle c,\tilde\delta \rangle\leq 1,\quad \forall Z\in\mathcal{Z},$$
which means that $c\vee\lambda\bar{c}$ is $1$-admissible and therefore $c\in\mathcal{C}^\lambda$.
\end{proof}
\begin{rem}[Monotonicity in $\lambda$]\label{rem:mon}
By the definition \eqref{def:primal-dom} of primal domains,
$$\mathcal{C}^1\subseteq\mathcal{C}^{\lambda_2}\subseteq\mathcal{C}^{\lambda_1}\subseteq\mathcal{C}^0\quad \text{for}\quad1\geq\lambda_2\geq\lambda_1\geq0,$$
where $\mathcal{C}^0$ is the set of all $x$-admissible consumption plans. This reflects the fact that the drawdown constraint gets more restrictive as $\lambda\in[0,1]$ increases.
The dual domains $\mathcal{D}^\lambda$, being the Brannath--Schachermayer duals of $\mathcal{C}^\lambda$, have the reverse monotonicity property:
$$\mathcal{D}^1\supseteq\mathcal{D}^{\lambda_2}\supseteq\mathcal{D}^{\lambda_1}\supseteq\mathcal{D}^0\quad \text{for}\quad1\geq\lambda_2\geq\lambda_1\geq0,$$
where, according to Proposition \ref{prop:min-of-D}, $\mathcal{D}^0$ is the smallest solid, convex, and closed subset of $L_+^0(\Omega\times[0,\hat{T}),\mathcal{O},\mathbb{P}\times d\kappa)$ containing $\mathcal{Z}$.
\end{rem}
\begin{prop}\label{prop:alpha-for-D}
For every $\lambda\in[0,1]$, the following holds:
\begin{equation*}
\alpha:=\sup_{Z\in\mathcal{Z}}\mathbb{E}\left[\int_0^{\hat{T}} Zd\kappa\right]=\sup_{\delta\in\mathcal{D}^\lambda}\mathbb{E}\left[\int_0^{\hat{T}}\delta d\kappa\right]\in(0,\infty).
\end{equation*}
\end{prop}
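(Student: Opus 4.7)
The plan is to establish finiteness of $\alpha$, positivity of $\alpha$, and the equality of the two suprema separately. The core observation that produces the equality is that, once $\alpha \in (0,\infty)$ is known, the constant consumption plan $c \equiv \alpha^{-1}$ belongs to $\mathcal{C}^\lambda$, making it a natural test process against members of the polar set $\mathcal{D}^\lambda$.

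For finiteness, I would fix $Z \in \mathcal{Z}$ and combine Fubini's theorem with the martingale property and the $\mathcal{F}_t$-measurability of $\dot\kappa_t$: for every finite $T \leq \hat T$,
\begin{equation*}
\mathbb{E}\left[\int_0^T Z_t\, d\kappa_t\right] = \int_0^T \mathbb{E}[Z_t \dot\kappa_t]\, dt = \int_0^T \mathbb{E}[Z_T \dot\kappa_t]\, dt = \mathbb{E}[Z_T \kappa_T] \leq A,
\end{equation*}
where the last inequality uses $\kappa_T \leq A$ from Assumption~\ref{ass:clock} and $\mathbb{E}[Z_T] = Z_0 = 1$. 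In the case $\hat T = \infty$, monotone convergence in $T$ extends the bound to the full integral, so $\alpha \leq A < \infty$. Positivity is immediate: for any $Z \in \mathcal{Z}$ (nonempty by Assumption~\ref{ass:NA}), strict positivity of $Z$ and of $\dot\kappa$ (so $\kappa_{\hat T} > 0$ a.s.) give $\mathbb{E}[\int_0^{\hat T} Z\, d\kappa] > 0$.

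The equality of suprema then follows from two short arguments. The inclusion $\mathcal{Z} \subseteq \mathcal{D}^\lambda$ from Proposition~\ref{prop:min-of-D} gives $\alpha \leq \sup_{\delta \in \mathcal{D}^\lambda} \mathbb{E}[\int_0^{\hat T} \delta\, d\kappa]$. For the reverse direction, take the constant process $c \equiv \alpha^{-1}$; then $\bar c_t = \alpha^{-1}$ for $t > 0$, so $c \vee \lambda \bar c = \alpha^{-1}$ outside a $d\kappa$-null set, and $\sup_{Z \in \mathcal{Z}} \langle c \vee \lambda \bar c, Z\rangle = \alpha^{-1} \cdot \alpha = 1$. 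By the admissibility criterion~\eqref{eq:x-admissible}, $c \in \mathcal{C}^\lambda$, and since $\mathcal{D}^\lambda = (\mathcal{C}^\lambda)^\circ$, every $\delta \in \mathcal{D}^\lambda$ satisfies $\alpha^{-1}\,\mathbb{E}[\int_0^{\hat T} \delta\, d\kappa] = \langle c, \delta\rangle \leq 1$, yielding $\mathbb{E}[\int_0^{\hat T} \delta\, d\kappa] \leq \alpha$. I do not anticipate serious difficulty; the only mildly delicate step is the Fubini/tower manipulation collapsing $\mathbb{E}[\int_0^T Z\, d\kappa]$ to $\mathbb{E}[Z_T \kappa_T]$ and its extension to infinite horizon via monotone convergence.
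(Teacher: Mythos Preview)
Your proof is correct, but the argument for the equality of the two suprema takes a different route from the paper's. The paper defines $\mathcal{D}':=\{\delta\in\mathcal{D}^\lambda:\mathbb{E}[\int_0^{\hat T}\delta\,d\kappa]\le\alpha\}$, verifies that $\mathcal{D}'$ is convex, closed (Fatou), $\preceq_\lambda$-solid, and contains $\mathcal{Z}$, and then invokes the minimality assertion of Proposition~\ref{prop:min-of-D} to force $\mathcal{D}'=\mathcal{D}^\lambda$. You instead work directly from the polar definition $\mathcal{D}^\lambda=(\mathcal{C}^\lambda)^\circ$: once $\alpha<\infty$ is known, the constant process $c\equiv\alpha^{-1}$ lies in $\mathcal{C}^\lambda$, and testing any $\delta\in\mathcal{D}^\lambda$ against it immediately gives the bound. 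Your route is more elementary in that it does not need the $\preceq_\lambda$-solidity or the minimality characterization of $\mathcal{D}^\lambda$ at all; it only uses that $\mathcal{Z}\subseteq\mathcal{D}^\lambda$ (trivial from admissibility) and the polar relation. The paper's argument, on the other hand, illustrates a technique it reuses elsewhere: showing a subset of $\mathcal{D}^\lambda$ inherits the structural properties and hence equals $\mathcal{D}^\lambda$ by minimality. For the finiteness step, you reach the same identity $\mathbb{E}[\int_0^T Z\,d\kappa]=\mathbb{E}[Z_T\kappa_T]$ as the paper, though the paper phrases it via It\^o's formula and localization rather than Fubini plus the tower property.
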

\begin{proof}
For a fixed $\lambda$, let us define
$$\mathcal{D}':=\left\{\delta\in\mathcal{D}^\lambda: \mathbb{E}\left[\int_0^{\hat{T}}\delta d\kappa\right]\leq \alpha\right\}.$$
Clearly, $\mathcal{Z}\subseteq\mathcal{D}'$. On the other hand, the set $\mathcal{D}'$ inherits the properties of $\mathcal{D}^\lambda$: it is convex, it is closed by Fatou's lemma, and it is $\preceq_\lambda$-solid, since it is easy to check that $\mathbb{E}\left[\int_0^{\hat{T}}\tilde\delta d\kappa\right]\leq\mathbb{E}\left[\int_0^{\hat{T}}\delta d\kappa\right]$ for $\tilde\delta\preceq_\lambda\delta$. By minimality of $\mathcal{D}^\lambda$ established in Proposition \ref{prop:min-of-D}, we must have $\mathcal{D}'=\mathcal{D}^\lambda$ and therefore $\sup_{\delta\in\mathcal{D}^\lambda}\mathbb{E}\left[\int_0^{\hat{T}} \delta d\kappa\right]=\alpha$.

Using Ito's formula for semimartingales and localization, one can show that $\mathbb{E}\left[\int_0^T Zd\kappa\right]=\mathbb{E}[Z_T\kappa_T]\leq A$ for every finite $T\leq\hat{T}$, $Z\in\mathcal{Z}$, and for constant $A$ from Assumption \ref{ass:clock}. Sending $T\to\infty$ if $\hat{T}=\infty$ and taking $T=\hat{T}$ otherwise, we conclude that $\alpha\leq A<\infty$.
\end{proof}

\section{The optimization problem}\label{sec:optimization}

In this section, we describe the solution of the optimization problem \eqref{eq:primal-problem} using convex duality methods. We fix $\lambda\in(0,1]$ and for brevity omit mentioning the dependence on $\lambda$ explicitly, wherever it is possible. That is, we omit $\lambda$ from notations $\mathcal{C}^\lambda$, $\mathcal{D}^\lambda$, $\mathcal{C}^\lambda(x)$, $\mathcal{D}^\lambda(y)$, $\mathcal{C}^\lambda(x,q)$. The domain $\mathcal{K}$ for parameters $(x,q)$, the domain $\mathcal{L}$ for dual variables $(y,r)$, dual domains $\mathcal{D}(y,r)$, as well as the value function $u$ from \eqref{eq:primal-problem}, the dual value function $v$ defined in \eqref{eq:dual-problem}, and the primal and dual optimizers, \textit{all implicitly depend on $\lambda\in(0,1]$}.

Note that the definition \eqref{def:primal-dom} of the primal domains $\mathcal{C}(x,q)$ (i.e., $\mathcal{C}^\lambda(x,q)$) can be reformulated in terms of $\mathcal{C}(x)$ (i.e., $\mathcal{C}^\lambda(x)$) as follows:
\begin{equation}\label{eq:2nd-def-of-Cxq}
\mathcal{C}(x,q)=\{c\geq 0 \text{ optional}: c\vee \lambda q\in\mathcal{C}(x)\}\subseteq \mathcal{C}(x),
\end{equation}
without any reference to the drawdown constraint, only to the essential lower bound on consumption. In fact, all of the statements in this section (except Propositions \ref{prop:D-y-r-solid} and \ref{prop:sufficient-for-Dyr2}) hold for arbitrary subsets $\mathcal{C}$, $\mathcal{D}$, and $\mathcal{C}(x,q)$ defined as in \eqref{eq:2nd-def-of-Cxq}, of  $L_+^0(\Omega\times[0,\hat{T}),\mathcal{O},\mathbb{P}\times d\kappa)$ satisfying the following three conditions:
\begin{enumerate}
\item The sets $\mathcal{C}$ and $\mathcal{D}$ are polar to each other in the sense of \cite{bipolar}.
\item The set $\mathcal{D}$ contains an element that is strictly positive $\mathbb{P}\times d\kappa-$almost everywhere.
\item $\alpha:=\sup_{\delta\in\mathcal{D}}\mathbb{E}\left[\int_0^{\hat{T}} \delta d\kappa\right]<\infty$.
\end{enumerate}
All of these conditions hold for $\mathcal{C}^\lambda$ and $\mathcal{D}^\lambda$, $\lambda\in(0,1]$, the second one being satisfied since $\mathcal{Z}\subseteq\mathcal{D}^\lambda$. Additionally, this means that the arguments of this section can be adopted in order to introduce, with the definition analogous to \eqref{eq:2nd-def-of-Cxq}, essential lower bound on consumption in the usual unconstrained model of \cite{mostovyi}.
 

\subsection{Dual relations between domains}\label{sec:dual-rel}
Due to Proposition \ref{prop:alpha-for-D}, the set $\mathcal{C}(x,q)$ for $x>0$ is non-empty if and only if $\lambda q\leq x/\alpha$.
This leads to the definition of two cones in $\mathbb{R}^2$,
\begin{equation*}
\begin{aligned}
\bar{\mathcal{K}}&=\left\{(x,q): x\geq 0 \text{ and } x/(\alpha\lambda)\geq  q\right\},\\
\bar{\mathcal{L}}&=\left\{(y,r): xy+qr\geq 0\text{ for all }(x,q)\in\bar{\mathcal{K}}\right\}=\left\{(y,r):y\geq 0 \text{ and }  0\geq r\geq -(\alpha\lambda) y\right\},
\end{aligned}
\end{equation*}
and their interiors $\mathcal{K}=\text{int}(\bar{\mathcal{K}})$, $\mathcal{L}=\text{int}(\bar{\mathcal{L}})$. The open cone $\mathcal{K}$ is precisely the set of pairs $(x,q)$ such that the optimization problem \eqref{eq:primal-problem} is non-trivial, the closed cone $\bar{\mathcal{L}}$ is the negative of the polar cone of $\mathcal{K}$ in $\mathbb{R}^2$. Further, we define the set
$$\mathcal{L}^*:=\mathcal{L}\cup\{(y,r):y>0,r=0\}\subseteq\bar{\mathcal{L}}$$
that turns out to be the appropriate domain for the dual value function. The dual domains are defined as follows (cf. (11) in \cite{hug-kramkov} and (4.4) in \cite{yu}):
\begin{equation*}
\mathcal{D}(y,r):=\left\{\delta\in\mathcal{D}(y):\langle c,\delta \rangle\leq xy+qr,\ \forall \ (x,q)\in\mathcal{K},\ c\in\mathcal{C}(x,q)\right\}, \quad (y,r)\in\mathcal{L}^*.
\end{equation*}
Some trivial but useful properties of the domains $(\mathcal{C}(x,q))_{(x,q)\in\mathcal{K}}$ and $(\mathcal{D}(y,r))_{(y,r)\in\mathcal{L}^*}$ are summarized in the following remark.
\begin{rem}
\begin{enumerate}
\item[(i)] We have $\mathcal{C}(x,q)=\mathcal{C}(x)$ for $q\leq 0$. For all $(x,q)\in\mathcal{K}$, the set $\mathcal{C}(x,q)$ is solid and contains the constant process $c\equiv x/\alpha>0$. For $x>0$ and $q_1<q_2<x/(\alpha\lambda)$, we have $\mathcal{C}(x,q_2)\subseteq\mathcal{C}(x,q_1)$, i.e., the sets $\mathcal{C}(x,q)$ decrease as $q$ increases.
\item[(ii)] We have $\mathcal{D}(y,0)=\mathcal{D}(y)$ for all $y>0$.
Since $\mathcal{C}(x,q)=\mathcal{C}(x)$ for $q\leq 0$ and since $r\leq0$ for every $(y,r)\in\mathcal{L}^*$, the definition of $\mathcal{D}(y,r)$ is equivalent to
\begin{equation}\label{eq:def-d-spaces}
\mathcal{D}(y,r)=\{\delta\in\mathcal{D}(y):\langle c,\delta \rangle\leq xy+qr,\ \forall\ q\geq 0, \ (x,q)\in\mathcal{K}, \ c\in\mathcal{C}(x,q)\}.
\end{equation}
In particular, $\mathcal{D}(y,r_1)\subseteq\mathcal{D}(y,r_2)$ for all $y>0$ and $-(\alpha\lambda) y<r_1<r_2\leq 0$, i.e., the sets $\mathcal{D}(y,r)$ increase as $r$ increases.
\end{enumerate}
\end{rem}
\begin{prop}\label{prop:D-y-r-solid}
The set $\mathcal{D}(y,r)$ is $\preceq_\lambda$-solid for every $(y,r)\in\mathcal{L}^*$.
\end{prop}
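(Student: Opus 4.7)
The plan is to verify the two defining conditions of $\mathcal{D}(y,r)$ separately: (i) that $\tilde\delta \in \mathcal{D}(y)$ and (ii) that $\langle c,\tilde\delta\rangle \leq xy+qr$ for every $q\geq 0$, $(x,q)\in\mathcal{K}$, $c\in\mathcal{C}(x,q)$.

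For (i), I would observe that $\preceq_\lambda$ is preserved under multiplication by a positive scalar: if $\tilde\delta\preceq_\lambda\delta$ with decomposition $\delta=\delta^1+\delta^2$, $\tilde\delta=\tilde\delta^1+\tilde\delta^2$ satisfying $\tilde\delta^1\leq\delta^1$ and $\tilde\delta^2\preceq\lambda\delta^2$, then dividing every process by $y>0$ gives a valid decomposition witnessing $\tilde\delta/y\preceq_\lambda\delta/y$ (the chronological ordering is scale-invariant by the characterization \eqref{eq:chron-ord-characterization}). Since $\mathcal{D}(y)=y\mathcal{D}^\lambda$, $\delta/y\in\mathcal{D}^\lambda$, and Proposition \ref{prop:min-of-D} says $\mathcal{D}^\lambda$ is $\preceq_\lambda$-solid, we obtain $\tilde\delta/y\in\mathcal{D}^\lambda$, i.e., $\tilde\delta\in\mathcal{D}(y)$. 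This already handles the case $r=0$ since $\mathcal{D}(y,0)=\mathcal{D}(y)$.

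For (ii), fix $q\geq 0$, $(x,q)\in\mathcal{K}$, and $c\in\mathcal{C}(x,q)$. The idea is to replace $c$ by the canonical element $c':=c\vee\lambda(\bar c\vee q)$ produced by Proposition \ref{prop:C-is-closed}. That proposition tells us that $c'\in\mathcal{C}(x,q)$ and that $c'$ satisfies \eqref{cond:ddc-with-q}; in particular, using that the running essential supremum of $c'$ equals $\bar c\vee \lambda q\mathbbm{1}_{(0,\hat T)}$ (noted in the proof of Proposition \ref{prop:C-is-closed}), one checks $c'\geq\lambda(\bar c\vee q)\geq\lambda\bar{c'}$, so $c'$ satisfies the weaker condition \eqref{cond:ddc}. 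Then the chain
\begin{equation*}
\langle c,\tilde\delta\rangle \;\leq\; \langle c',\tilde\delta\rangle \;\leq\; \langle c',\delta\rangle \;\leq\; xy+qr
\end{equation*}
holds: the first inequality is pointwise monotonicity ($c\leq c'$ and $\tilde\delta\geq 0$); the second is \eqref{eq:monotonicity-lambda-ord} applied to the process $c'$ that satisfies \eqref{cond:ddc} together with $\tilde\delta\preceq_\lambda\delta$; and the third is the defining property of $\delta\in\mathcal{D}(y,r)$ tested against $c'\in\mathcal{C}(x,q)$.

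The only slightly subtle point is being careful that the substitution $c\mapsto c'$ stays inside $\mathcal{C}(x,q)$ so that the defining inequality for $\delta\in\mathcal{D}(y,r)$ can be invoked; this is exactly the content of Proposition \ref{prop:C-is-closed}, so the proof is essentially a two-line combination of Propositions \ref{prop:C-is-closed}, \ref{prop:min-of-D}, and the monotonicity characterization \eqref{eq:monotonicity-lambda-ord} of $\preceq_\lambda$. I do not anticipate a genuine obstacle.
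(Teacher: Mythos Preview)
Your proposal is correct and follows essentially the same approach as the paper. The paper's proof is slightly more compact: it replaces $c$ by $c\vee\lambda\bar c$ (rather than your $c\vee\lambda(\bar c\vee q)$) and checks the defining inequality directly for all $(x,q)\in\mathcal{K}$, so that $\tilde\delta\in\mathcal{D}(y)$ falls out of the case $q=0$ without a separate scaling argument; but the core idea---pass to a process in $\mathcal{C}(x,q)$ that satisfies \eqref{cond:ddc} and apply \eqref{eq:monotonicity-lambda-ord}---is identical.
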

\begin{proof}
Assume $\delta\in\mathcal{D}(y,r)$ and $\tilde\delta\preceq_\lambda\delta$. Let $(x,q)\in\mathcal{K}$ and $c\in\mathcal{C}(x,q)$. Then, by Proposition \ref{prop:C-is-closed}, $c\vee\lambda\bar{c}\in\mathcal{C}(x,q)$ and satisfies \eqref{cond:ddc}, therefore
$$\langle c,\tilde{\delta}\rangle\leq\langle c\vee\lambda\bar{c},\tilde{\delta}\rangle\overset{\eqref{eq:monotonicity-lambda-ord}}{\leq}\langle c\vee\lambda\bar{c},\delta \rangle\leq xy+rq,$$
implying that $\tilde\delta\in\mathcal{D}(y,r)$, i.e., $\mathcal{D}(y,r)$ is $\preceq_\lambda$-solid.
\end{proof}

The following is a simple sufficient condition for an element of $\mathcal{D}(y)$ in order to be an element of $\mathcal{D}(y,r)$. It will be used in Section \ref{sec:complete} to characterize the optimizers for \eqref{eq:primal-problem} in a complete market.

\begin{prop}\label{prop:sufficient-for-Dyr2}
Let $\tilde\delta\preceq_\lambda\delta\in\mathcal{D}(y)$ and $\tilde\delta$ is strictly positive on a set of positive measure (i.e., excluding the trivial case $\tilde\delta=0$, $\mathbb{P}\times d\kappa-$a.e.). Then $\tilde\delta\in\mathcal{D}(y,r)$ for
\begin{equation*}
r:=\mathbb{E}\left[\int_0^{\hat{T}} [(\tilde\delta-\delta)\vee 0-\lambda(\delta-\tilde\delta)\vee0] d\kappa\right].
\end{equation*}
\end{prop}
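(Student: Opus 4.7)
The plan is to exploit the canonical decomposition produced by Proposition~\ref{prop:lambda-ord-alternative}: set $\eta := (\tilde\delta - \delta)\vee 0$ and $\mu := \lambda(\delta - \tilde\delta)\vee 0$, so that $\eta \preceq \mu$ and $r = \mathbb{E}[\int_0^{\hat T}\eta\,d\kappa] - \mathbb{E}[\int_0^{\hat T}\mu\,d\kappa]$. First I would verify that $(y,r)\in\mathcal{L}^*$: the bound $r\leq 0$ is immediate from the stopping-time characterization~\eqref{eq:chron-ord-characterization} of $\eta\preceq\mu$ applied at $T\equiv 0$; the bound $r>-\alpha\lambda y$ follows from $\mathbb{E}[\int\mu\,d\kappa]\leq\lambda\mathbb{E}[\int\delta\,d\kappa]\leq\alpha\lambda y$ (Proposition~\ref{prop:alpha-for-D}), where strict inequality is forced by the hypothesis on $\tilde\delta$, since the extreme case would require $\tilde\delta=0$, $\mathbb{P}\times d\kappa$-a.e., contradicting strict positivity on a set of positive measure.

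Next, by~\eqref{eq:def-d-spaces} it suffices to show $\langle c,\tilde\delta\rangle\leq xy+qr$ for every $(x,q)\in\mathcal{K}$ with $q\geq 0$ and every $c\in\mathcal{C}(x,q)$. I would start from
\[
\langle c,\tilde\delta\rangle=\langle c,\tilde\delta\wedge\delta\rangle+\langle c,\eta\rangle
\]
and estimate the second summand via the splitting $c=(c-q)\vee 0+c\wedge q$. For the upper piece, Proposition~\ref{prop:increasing-dom} gives $(c-q)\vee 0\leq(\bar c-q)\vee 0$, $\mathbb{P}\times d\kappa$-a.e., and $(\bar c-q)\vee 0$ belongs to $\mathcal{C}_{\text{inc}}$ (left-continuous, non-decreasing, vanishing at $0$ because $q\geq 0$). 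Proposition~\ref{prop:remarkable-property} then yields $\langle(\bar c-q)\vee 0,\eta\rangle\leq\langle(\bar c-q)\vee 0,\mu\rangle$. For the lower piece, $\langle c\wedge q,\eta\rangle\leq q\,\mathbb{E}[\int\eta\,d\kappa]$.

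Substituting these bounds and rewriting $(\bar c-q)\vee 0=\bar c\vee q-q$ together with $\mu=\lambda(\delta-\tilde\delta)\vee 0$ regroups everything as
\[
\langle c,\tilde\delta\rangle\leq \langle c,\tilde\delta\wedge\delta\rangle + \langle\lambda\bar c\vee\lambda q,(\delta-\tilde\delta)\vee 0\rangle + qr.
\]
The first two summands are dominated by the corresponding pieces of $\langle c\vee\lambda(\bar c\vee q),\delta\rangle$, because $c\vee\lambda(\bar c\vee q)=c\vee\lambda\bar c\vee\lambda q$ pointwise majorizes both $c$ and $\lambda\bar c\vee\lambda q$, while $\delta=\tilde\delta\wedge\delta+(\delta-\tilde\delta)\vee 0$. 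Proposition~\ref{prop:C-is-closed} yields $c\vee\lambda(\bar c\vee q)\in\mathcal{C}(x)$, and together with $\delta\in\mathcal{D}(y)=y(\mathcal{C}^\lambda)^\circ$ this produces $\langle c\vee\lambda(\bar c\vee q),\delta\rangle\leq xy$, closing the estimate.

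The main subtlety is identifying $(\bar c-q)\vee 0$, rather than $\bar c$ or $c$ itself, as the right element of $\mathcal{C}_{\text{inc}}$ to feed into Proposition~\ref{prop:remarkable-property}: this is precisely what generates the correction term $qr$ and captures the effect of the lower bound $q$ encoded in $\mathcal{C}(x,q)$. Everything else is bookkeeping, and the strict-positivity hypothesis on $\tilde\delta$ is invoked only to guarantee that $(y,r)$ lands in $\mathcal{L}^*$ rather than on the boundary of $\bar{\mathcal{L}}$.
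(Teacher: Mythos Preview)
Your proof is correct and follows essentially the same route as the paper: both arguments hinge on applying Proposition~\ref{prop:remarkable-property} to the process $(\bar c\vee q)-q\in\mathcal{C}_{\text{inc}}$ together with $\eta\preceq\mu$, and then closing with $c\vee\lambda(\bar c\vee q)\in\mathcal{C}(x)$ against $\delta\in\mathcal{D}(y)$. The paper organizes the bookkeeping slightly differently---it tacitly replaces $c$ by $c\vee\lambda(\bar c\vee q)$ at the outset (so that $\lambda(\bar c\vee q)\leq c\leq\bar c\vee q$) and then estimates $\langle c,\tilde\delta-\delta\rangle\leq qr$ directly, whereas you keep a general $c\in\mathcal{C}(x,q)$ and split $c=(c-q)\vee 0+c\wedge q$---but the key inequality and the role of $q$ in producing the correction term $qr$ are identical.
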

\begin{proof}
First, since $(\tilde\delta-\delta)\vee 0\preceq\lambda(\delta-\tilde\delta)\vee0$ by Proposition \ref{prop:lambda-ord-alternative}, $r\leq 0$. Second,
$$r\geq -\mathbb{E}\left[\int_0^{\hat{T}} [\lambda(\delta-\tilde\delta)\vee0] d\kappa\right]\geq -\mathbb{E}\left[\int_0^{\hat{T}} \lambda\delta d\kappa\right]\geq -(\alpha\lambda)y,$$
and the strict inequality holds between the first and the last terms because $\tilde\delta>0$ on a set of positive measure. Hence, $(y,r)\in\mathcal{L}^*$ so that $\mathcal{D}(y,r)$ is well-defined. According to \eqref{eq:def-d-spaces}, it is enough to test the defining property of $\mathcal{D}(y,r)$ only for $(x,q)\in\mathcal{K}$, $c\in\mathcal{C}(x,q)$ with $q\geq 0$.
Since $\bar{c}\vee q\geq c\geq \lambda(\bar{c}\vee q)$, $\mathbb{P}\times d\kappa-$a.e.,
\begin{equation*}
\begin{aligned}
\langle c,\tilde\delta-\delta\rangle&=\langle c,(\tilde\delta-\delta)\vee 0\rangle-\langle c,(\delta-\tilde\delta)\vee 0\rangle\leq \langle \bar{c}\vee q,(\tilde\delta-\delta)\vee 0\rangle-\langle \bar{c}\vee q,\lambda(\delta-\tilde\delta)\vee 0\rangle\\
&=\langle \bar{c}\vee q,(\tilde\delta-\delta)\vee 0-\lambda(\delta-\tilde\delta)\vee0\rangle\leq \langle q, (\tilde\delta-\delta)\vee 0-\lambda(\delta-\tilde\delta)\vee0\rangle=qr,
\end{aligned}
\end{equation*}
where the last inequality follows from $(\bar{c}\vee q)-q\in\mathcal{C}_{\text{inc}}$ and $(\tilde\delta-\delta)\vee 0\preceq\lambda(\delta-\tilde\delta)\vee0$. Rearranging the terms and using $c\in\mathcal{C}(x)$, $\delta\in\mathcal{D}(y)$, we obtain $\langle c,\tilde\delta\rangle\leq\langle c,\delta\rangle+qr\leq xy+qr$, completing the proof.
\end{proof}

Now we prove duality relations between the families $(\mathcal{C}(x,q))_{(x,q)\in\mathcal{K}}$ and $(\mathcal{D}(y,r))_{(y,r)\in\mathcal{L}^*}$. The next proposition is an analogue of Proposition 1 in \cite{hug-kramkov} for our setting and is crucial for reducing, similarly to \cite{hug-kramkov}, the two-dimensional optimization problem to one dimension in the proof of Theorem \ref{thm:main-duality} below.
\begin{prop}\label{prop:conjugate-rel}
\begin{enumerate}
\item[(i)] For every $(x,q)\in\mathcal{K}$, $\mathcal{C}(x,q)$ contains a stictly positive process. For a non-negative optional process $c$, the following holds:
\begin{equation}\label{eq:C-characterization}
c\in\mathcal{C}(x,q)\quad\Leftrightarrow\quad \langle c,\delta \rangle\leq xy+qr\quad \text{for all } \ (y,r)\in\mathcal{L}^*,\ \delta\in\mathcal{D}(y,r).
\end{equation}
\item[(ii)] For every $(y,r)\in\mathcal{L}^*$, $\mathcal{D}(y,r)$ contains a strictly positive process. For a non-negative optional process $\delta$, the following holds:
\begin{equation}\label{eq:D-characterization}
\delta\in\mathcal{D}(y,r)\quad\Leftrightarrow\quad \langle c,\delta \rangle\leq xy+qr\quad \text{for all } \ (x,q)\in\mathcal{K},\ c\in\mathcal{C}(x,q).
\end{equation}
\end{enumerate}
\end{prop}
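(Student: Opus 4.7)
The first task is to produce strictly positive processes. For $\mathcal{C}(x,q)$ I would take the constant process $c\equiv x/\alpha$: since $(x,q)\in\mathcal{K}$ forces $\lambda q<x/\alpha$, we have $c\vee\lambda q=x/\alpha$, which is $x$-admissible by the definition of $\alpha$ in Proposition \ref{prop:alpha-for-D}, and the equivalent formulation \eqref{eq:2nd-def-of-Cxq} then places $c$ in $\mathcal{C}(x,q)$. For $\mathcal{D}(y,r)$ with $r=0$ the set $\mathcal{D}(y,0)=\mathcal{D}(y)$ contains $yZ>0$ for any $Z\in\mathcal{Z}$; for $r\in(-\alpha\lambda y,0)$ I would apply Proposition \ref{prop:sufficient-for-Dyr2} with $\delta:=yZ$ and $\tilde\delta:=\varepsilon yZ$, which trivially satisfies $\tilde\delta\preceq_\lambda\delta$. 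A direct computation yields $\tilde\delta\in\mathcal{D}(y,r_\varepsilon)$ with $r_\varepsilon=-\lambda(1-\varepsilon)y\,\mathbb{E}[\int_0^{\hat{T}}Z\,d\kappa]>-\alpha\lambda y$, and shrinking $\varepsilon$ so that $r_\varepsilon\leq r$ gives $\tilde\delta\in\mathcal{D}(y,r)$ by the monotonicity of $\mathcal{D}(y,\cdot)$ recorded in the preceding remark.

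The forward implications ``$\Rightarrow$'' in both \eqref{eq:C-characterization} and \eqref{eq:D-characterization} are simply the defining inequality of $\mathcal{D}(y,r)$. For ``$\Leftarrow$'' of \eqref{eq:D-characterization}, I would test the hypothesis at $(x,q)=(1,0)\in\mathcal{K}$: since $\mathcal{C}(1,0)=\mathcal{C}(1)=\mathcal{C}^\lambda$, this yields $\langle c,\delta\rangle\leq y$ for every $c\in\mathcal{C}^\lambda$, hence $\delta\in y(\mathcal{C}^\lambda)^\circ=\mathcal{D}(y)$; the remaining condition in the definition of $\mathcal{D}(y,r)$ is then precisely the hypothesis.

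The main obstacle is ``$\Leftarrow$'' of \eqref{eq:C-characterization}. Fix $(x,q)\in\mathcal{K}$ and an optional $c\geq 0$ satisfying the hypothesis. By \eqref{eq:2nd-def-of-Cxq} it suffices to show $c\vee\lambda q\in\mathcal{C}(x)$, and by the bipolar theorem this reduces to establishing $\langle c\vee\lambda q,\delta\rangle\leq xy$ for every $y>0$ and $\delta\in\mathcal{D}(y)$. For $q\leq 0$ this is immediate from testing the hypothesis at $(y,0)$, since $c\vee\lambda q=c$. For the central case $q>0$, I would use the test process $\tilde\delta:=\delta\mathbbm{1}_{\{c\geq\lambda q\}}$, which is optional and satisfies $\tilde\delta\leq\delta$, hence $\tilde\delta\preceq_\lambda\delta$. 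When $\tilde\delta$ is nontrivial, Proposition \ref{prop:sufficient-for-Dyr2} places $\tilde\delta\in\mathcal{D}(y,r')$ with $r':=-\lambda\,\mathbb{E}[\int_0^{\hat{T}}\delta\mathbbm{1}_{\{c<\lambda q\}}\,d\kappa]$; the strict inequality $r'>-\alpha\lambda y$ needed for $(y,r')\in\mathcal{L}^*$ follows because $\tilde\delta\not\equiv 0$ forces $\mathbb{E}[\int\delta\mathbbm{1}_{\{c<\lambda q\}}d\kappa]<\mathbb{E}[\int\delta\,d\kappa]\leq\alpha y$. The key identity
\[
\langle c\vee\lambda q,\delta\rangle=\langle c,\tilde\delta\rangle+\lambda q\,\mathbb{E}\!\left[\int_0^{\hat{T}}\delta\mathbbm{1}_{\{c<\lambda q\}}\,d\kappa\right]=\langle c,\tilde\delta\rangle-qr'
\]
combined with the hypothesis $\langle c,\tilde\delta\rangle\leq xy+qr'$ then yields $\langle c\vee\lambda q,\delta\rangle\leq xy$. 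The degenerate case $\tilde\delta\equiv 0$, meaning $\delta$ is supported in $\{c<\lambda q\}$, is handled directly: $\langle c\vee\lambda q,\delta\rangle=\lambda q\,\mathbb{E}[\int\delta\,d\kappa]\leq\lambda q\alpha y<xy$, where the strict inequality uses $\lambda q\alpha<x$ from $(x,q)\in\mathcal{K}$.
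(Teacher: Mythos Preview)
Your argument is essentially the paper's, with one small gap and one mild streamlining.

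\textbf{The gap (part (ii), strict positivity).} Your construction $\tilde\delta=\varepsilon yZ$ gives, via Proposition~\ref{prop:sufficient-for-Dyr2}, the value $r_\varepsilon=-\lambda(1-\varepsilon)y\,\mathbb{E}\bigl[\int_0^{\hat T}Z\,d\kappa\bigr]$. As $\varepsilon\downarrow 0$ this tends to $-\lambda y\,\mathbb{E}\bigl[\int Z\,d\kappa\bigr]$, which in an incomplete market may be strictly larger than $-\alpha\lambda y$; so for $r$ close to $-\alpha\lambda y$ you cannot reach $r_\varepsilon\le r$ by shrinking $\varepsilon$ alone. The fix is immediate: since $-r/(\lambda y)<\alpha=\sup_{Z\in\mathcal Z}\mathbb{E}\bigl[\int Z\,d\kappa\bigr]$, first choose $Z\in\mathcal Z$ with $\mathbb{E}\bigl[\int Z\,d\kappa\bigr]>-r/(\lambda y)$, and then take $\varepsilon$ small. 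The paper avoids this by a more direct route: it shows $\mathcal{D}(\varepsilon)\subseteq\mathcal{D}(y,r)$ for $\varepsilon:=y+r/(\alpha\lambda)>0$, checking that $\varepsilon x\le xy+qr$ for all $(x,q)\in\mathcal K$; any $\varepsilon Z$ with $Z\in\mathcal Z$ then works.

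\textbf{The streamlining (part (i), ``$\Leftarrow$'').} Both you and the paper use the test process $\tilde\delta=\delta\mathbbm{1}_{\{c\ge\lambda q\}}$. The paper verifies $\tilde\delta\in\mathcal{D}(1,(p-s)\lambda)$ from scratch by testing against all $c'\in\mathcal{C}(x',q')$; you instead invoke Proposition~\ref{prop:sufficient-for-Dyr2}, which packages exactly that computation. Your identity $\langle c\vee\lambda q,\delta\rangle=\langle c,\tilde\delta\rangle-qr'$ then finishes in one line, whereas the paper adds and subtracts $\lambda q p$ and $\lambda q s$ explicitly. Same idea, slightly cleaner bookkeeping on your side.
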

\begin{proof}
We show (ii) first. The implication ``$\Rightarrow$'' in \eqref{eq:D-characterization} holds by the definition of $\mathcal{D}(y,r)$. Assuming the right-hand side of \eqref{eq:D-characterization} and substituting $q=0$ implies that $\delta\in\mathcal{D}(y)$, therefore the implication ``$\Leftarrow$'' also follows from the definition of $\mathcal{D}(y,r)$. To show that $\mathcal{D}(y,r)$ contains a stictly positive process, we will find an $\varepsilon>0$ such that $\mathcal{D}(\varepsilon)\subseteq\mathcal{D}(y,r)$. Since $\mathcal{Z}\subseteq\mathcal{D}$, the claim then follows from strict positivity of equivalent martingale deflators $Z\in\mathcal{Z}$. A sufficient condition for $\mathcal{D}(\varepsilon)\subseteq\mathcal{D}(y,r)$ is
$$\varepsilon x\leq xy+qr,\quad \forall (x,q)\in\mathcal{K},$$
since then for every $\delta\in\mathcal{D}(\varepsilon)$ and $c\in\mathcal{C}(x,q)\subseteq\mathcal{C}(x)$, $\langle c,\delta \rangle\leq \varepsilon x\leq xy+qr$. Equivalently, $\varepsilon\leq y+(q/x)r$ for all $(x,q)\in\mathcal{K}$. By the definition of the open cone $\mathcal{K}$ this means that $\varepsilon\leq y+r/(\alpha\lambda)$. Finally, by the definition of $\mathcal{L}^*$, $y+r/(\alpha\lambda)>0$, therefore such an $\varepsilon>0$ exists.

Now we show (i). The constant consumption plan $c\equiv x/\alpha$ belongs to $\mathcal{C}(x,q)$ and is strictly positive. The implication ``$\Rightarrow$'' in \eqref{eq:C-characterization} follows from the definition of $\mathcal{D}(y,r)$. To show ``$\Leftarrow$'' when $q\leq 0$, we conclude that $c\in\mathcal{C}(x)=\mathcal{C}(x,q)$ by testing the right-hand side of \eqref{eq:C-characterization} with $\delta\in\mathcal{D}=\mathcal{D}(1,0)$. It remains to prove ``$\Leftarrow$'' for the case $q>0$. Let us fix $\delta\in\mathcal{D}$. According to \eqref{eq:2nd-def-of-Cxq}, we want to show that the right-hand side of \eqref{eq:C-characterization} implies $\langle c\vee\lambda q,\delta \rangle\leq x$ or, equivalently,
\begin{equation}\label{eq:belong-to-Cx}
\langle\lambda q, \delta \rangle+\langle (c-\lambda q)\vee 0,\delta \rangle\leq x.
\end{equation}
The idea of the proof of this inequality below is to show that $\delta\mathbbm{1}_{\{c\geq\lambda q\}}\in\mathcal{D}(1,r)$ for an appropriate $r$ and then apply the right-hand side of \eqref{eq:C-characterization} to this process.

Denote $s:=\mathbb{E}\left[\int_0^{\hat{T}} \delta d\kappa\right]\in[0,\alpha]$. For every $q'\geq 0, (x',q')\in\mathcal{K}$, and $c'\in\mathcal{C}(x',q')$, we have $c'\vee\lambda q'\in\mathcal{C}(x')$. Therefore,
\begin{equation}\label{eq:test-delta}
\langle (c'-\lambda q')\vee0, \delta\mathbbm{1}_{\{c\geq\lambda q\}}\rangle \leq \langle (c'-\lambda q')\vee 0,\delta \rangle\leq x'-\lambda q's.
\end{equation}
Denote $p:=\mathbb{E}\left[\int_0^{\hat{T}} \delta\mathbbm{1}_{\{c\geq\lambda q\}} d\kappa\right]\in[0,s]$. If $p=0$, then $\delta=0$ almost everywhere on $\{c\geq\lambda q\}$ and \eqref{eq:belong-to-Cx} is proved, since $\lambda qs\leq \lambda q\alpha<x$.
Hence, we can assume that $p\in(0,s]$. By adding $\lambda q'p$ to the inequality \eqref{eq:test-delta}, we obtain
\begin{equation}\label{eq:belong-to-D}
\langle c'\vee\lambda q', \delta\mathbbm{1}_{\{c\geq\lambda q\}}\rangle \leq x'+\lambda q'(p-s),
\end{equation}
for all  $q'\geq 0$, $(x',q')\in\mathcal{K}$, and $c'\in\mathcal{C}(x',q')$. Since $0\geq (p-s)\lambda>-\alpha\lambda$, $(1,(p-s)\lambda)\in\mathcal{L}^*$ and the inequality \eqref{eq:belong-to-D} means that $\delta\mathbbm{1}_{\{c\geq\lambda q\}}\in\mathcal{D}(1,(p-s)\lambda)$. Therefore, we can test the right-hand side of \eqref{eq:C-characterization} with this process, which yields
$$\langle c, \delta\mathbbm{1}_{\{c\geq\lambda q\}}\rangle\leq x+\lambda q(p-s).$$
By subtracting $\lambda qp=\langle\lambda q,\delta\mathbbm{1}_{\{c\geq\lambda q\}} \rangle$ from both sides, we get
$$\langle (c-\lambda q)\vee 0,\delta \rangle=\langle c-\lambda q,\delta\mathbbm{1}_{\{c\geq\lambda q\}} \rangle\leq x-\lambda qs.$$
Moving $\lambda qs=\langle\lambda q,\delta \rangle$ to the left yields \eqref{eq:belong-to-Cx}, concluding the proof.
\end{proof}


\subsection{The main duality result for optimization}\label{sec:duality-sol}

Following \cite{mostovyi} (and the standard duality theory), we introduce the conjugate stochastic field $V$ to $U$:
$$V(\omega,t,y):=\sup_{x>0}(U(\omega,t,x)-xy),\quad (\omega,t,y)\in\Omega\times[0,\hat{T})\times[0,\infty),$$
and denote by $I(\omega,t,y):=-V'(\omega,t,y)$, the derivative of $-V$ with respect to $y$. It is well-known that $-V$ satisfies Assumption \ref{ass:utility} and $I(\omega,t,\cdot)=(U')^{-1}(\omega,t,\cdot)$.

We define the dual optimization problem for \eqref{eq:primal-problem} through the value function
\begin{equation}\label{eq:dual-problem}
v(y,r):=\inf_{\delta\in\mathcal{D}(y,r)}\mathbb{E}\left[\int_0^{\hat{T}} V(\omega,t,\delta_t)d\kappa_t\right],\quad(y,r)\in\mathcal{L}^*,
\end{equation}
where the convention
$$\mathbb{E}\left[\int_0^{\hat{T}} V(\omega,t,\delta_t)d\kappa_t\right]:=+\infty\quad\text{if}\quad\mathbb{E}\left[\int_0^{\hat{T}} V^+(\omega,t,\delta_t)d\kappa_t\right]=+\infty$$
is used and $W^{+}$ denotes the positive part of a stochastic field $W$. We often omit writing down the dependence of $U$, $U'$, $V$, and $I$ on $\omega$ and $t$ in what follows.

The following theorem describes the duality between \eqref{eq:primal-problem} and \eqref{eq:dual-problem}, and establishes existence and uniqueness of optimizers under Mostovyi's finiteness assumptions for value functions $u$ and $v$.
\begin{thm}\label{thm:main-duality}
Suppose that $\lambda\in(0,1]$, Assumptions \ref{ass:clock}, \ref{ass:NA}, \ref{ass:utility} hold, and
$$u(x,q)>-\infty\text{  for all  }(x,q)\in\mathcal{K}\quad \text{and}\quad v(y,r)<\infty\text{  for all  }(y,r)\in\mathcal{L}^*.$$
Then we have
\begin{enumerate}
\item $u(x,q)<\infty$ for all $(x,q)\in\mathcal{K}$, $v(y,r)>-\infty$ for all $(y,r)\in\mathcal{L}^*$. The functions $u$ and $v$ are conjugate:
\begin{equation}\label{eq:conjugacy-rel}
\begin{aligned}
u(x,q)&=\inf_{(y,r)\in\mathcal{L}^*}\{v(y,r)+xy+qr\},&\quad (x,q)\in\mathcal{K},\\
v(y,r)&=\sup_{(x,q)\in\mathcal{K}}\{u(x,q)-xy-qr\},&\quad (y,r)\in\mathcal{L}^*.\\
\end{aligned}
\end{equation}
\item The subdifferential of $u$ maps $\mathcal{K}$ into $\mathcal{L}^*$:
$$\partial u(x,q)\subseteq\mathcal{L}^*\quad \text{for all}\quad(x,q)\in\mathcal{K}.$$
\item For all $(x,q)\in\mathcal{K}$ and $(y,r)\in\mathcal{L}^*$ the optimal solutions $\hat{c}(x,q)$ to \eqref{eq:primal-problem} and $\hat{\delta}(y,r)$ to \eqref{eq:dual-problem} exist and are unique. Moreover, if $(y,r)\in\partial u(x,q)$ then we have the dual relations
\begin{equation}\label{eq:dual-relations-thm}
\begin{aligned}
\hat{\delta}_t(y,r)&=U'(t,\hat{c}_t(x,q)),\quad \mathbb{P}\times d\kappa-\text{a.e.},\\
\langle\hat{c}(x,q),\hat{\delta}(y,r)\rangle&=xy+qr.
\end{aligned}
\end{equation}
\end{enumerate}
\end{thm}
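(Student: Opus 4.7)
The plan is to combine Mostovyi's abstract duality theorem (\cite[Theorem~2.3]{mostovyi}) with the bipolar structure of the primal-dual families from Proposition~\ref{prop:conjugate-rel}, reducing the two-parameter problem to one-parameter ones in the spirit of~\cite{hug-kramkov}. Weak duality comes essentially for free: Fenchel's inequality $U(t,c)\leq V(t,\delta)+c\delta$, combined with Proposition~\ref{prop:conjugate-rel}(i), gives
\begin{equation*}
\mathbb{E}\!\left[\int_0^{\hat{T}} U(t,c_t)\,d\kappa_t\right]\leq v(y,r)+xy+qr \quad\text{for all } c\in\mathcal{C}(x,q),\ \delta\in\mathcal{D}(y,r),
\end{equation*}
so that $u(x,q)\leq\inf_{(y,r)\in\mathcal{L}^*}\{v(y,r)+xy+qr\}$, and the finiteness claims in item~1 follow from the hypotheses on $u$ and $v$.

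For the matching (strong) duality I would first dispatch the base case $q\leq 0$, where $\mathcal{C}(x,q)=\mathcal{C}(x)=x\mathcal{C}^\lambda$ by Proposition~\ref{prop:C-is-closed}. The three conditions listed at the start of Section~\ref{sec:optimization} hold for $(\mathcal{C}^\lambda,\mathcal{D}^\lambda)$: mutual polarity in $L^0_+$ (Proposition~\ref{prop:C-bipolar} plus the bipolar theorem of~\cite{bipolar}), a strictly positive element in $\mathcal{D}^\lambda$ arising from $\mathcal{Z}\subseteq\mathcal{D}^\lambda$, and $\alpha<\infty$ (Proposition~\ref{prop:alpha-for-D}). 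Mostovyi's theorem then delivers items~1--3 along the slice $\{q=0\},\{r=0\}$ directly.

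To extend to general $(x,q_0)\in\mathcal{K}$ with $q_0>0$, I would fix $q_0$ and apply Mostovyi's theorem to the translated one-parameter primal family
\begin{equation*}
\tilde{\mathcal{C}}_{q_0}(x):=\mathcal{C}(x,q_0)=\{c\geq 0\text{ optional}:\ c\vee\lambda q_0\in\mathcal{C}(x)\},\qquad x>\alpha\lambda q_0.
\end{equation*}
Its bipolar is identified via Proposition~\ref{prop:conjugate-rel}(i) as the closed convex solid hull in $L^0_+$ of $\bigcup_{r:(y,r)\in\mathcal{L}^*}\mathcal{D}(y,r)$; convexity, solidity, and closedness in $\mathbb{P}\times d\kappa$-measure of $\tilde{\mathcal{C}}_{q_0}(1)$ are checked by a routine adaptation of Proposition~\ref{prop:C-bipolar}. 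This yields a one-dimensional conjugacy between $x\mapsto u(x,q_0)$ and $\tilde v_{q_0}(y):=\inf\{v(y,r)+q_0 r:(y,r)\in\mathcal{L}^*\}$, which immediately unfolds to the full two-parameter relation~\eqref{eq:conjugacy-rel}. Existence and uniqueness of $\hat c(x,q_0)$ and $\hat\delta(y,r)$ and the pointwise relation $\hat\delta_t=U'(t,\hat c_t)$ are inherited from Mostovyi's statement; the budget equality $\langle\hat c,\hat\delta\rangle=xy+qr$ is obtained by tracing equality through the Fenchel/Proposition~\ref{prop:conjugate-rel} chain; and $\partial u(x,q)\subseteq\mathcal{L}^*$ follows because the hypothesis $v<\infty$ on $\mathcal{L}^*$ together with weak duality forces every subgradient to lie in $\mathcal{L}^*$.

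I expect the main obstacle to lie in this reduction: unlike in the genuinely homogeneous Mostovyi setting, $\tilde{\mathcal{C}}_{q_0}(x)$ is not of the form $x\,\tilde{\mathcal{C}}_{q_0}(1)$ because of the additive lower bound $\lambda q_0$, so transferring Mostovyi's abstract hypotheses to $\tilde{\mathcal{C}}_{q_0}$ and correctly identifying its polar as the closure of the union of the $\mathcal{D}(y,r)$ requires care. This is the precise analogue of the two-parameter reduction carried out in~\cite{hug-kramkov} for terminal-wealth utility maximization with a random endowment, and I would follow their strategy essentially verbatim, using here our Proposition~\ref{prop:conjugate-rel} wherever they invoke their bipolar-type description of admissible terminal wealths.
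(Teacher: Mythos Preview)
Your reduction has the right spirit but the wrong slicing, and this is a genuine gap. Fixing $q_0$ and applying Mostovyi to $x\mapsto\tilde{\mathcal{C}}_{q_0}(x)$ cannot work directly, for exactly the reason you flag: Mostovyi's abstract Theorem~3.2 is stated for a \emph{single} polar pair $(\mathcal{A},\mathcal{B})$ and scaled families $z\mathcal{A}$, $w\mathcal{B}$, whereas $\mathcal{C}(x,q_0)\neq x\,\mathcal{C}(1,q_0)$. Your appeal to \cite{hug-kramkov} to resolve this is the right reference but the wrong reading of it: Hugonnier--Kramkov (and the paper) do \emph{not} fix one primal coordinate. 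They fix a \emph{dual} point $(y,r)\in\mathcal{L}^*$ and set
\[
\tilde{\mathcal{C}}^{(y,r)}:=\bigcup_{(x,q)\in\mathcal{K}:\,xy+qr\le 1}\mathcal{C}(x,q),
\]
whose closure $\mathcal{C}^{(y,r)}$ is, by Proposition~\ref{prop:conjugate-rel}, exactly the polar of $\mathcal{D}(y,r)$. This set \emph{is} homogeneous: $z\tilde{\mathcal{C}}^{(y,r)}=\bigcup_{xy+qr\le z}\mathcal{C}(x,q)$. A closure lemma (Lemma~\ref{lem:sup-over-set}) lets one compute $\sup_{c\in z\mathcal{C}^{(y,r)}}\mathbb{E}[\int U(c)]=\sup_{xy+qr\le z}u(x,q)$, and Mostovyi then gives $v(y,r)=\sup_{z>0}\{\sup_{xy+qr\le z}u(x,q)-z\}=\sup_{(x,q)\in\mathcal{K}}\{u(x,q)-xy-qr\}$ together with existence and uniqueness of $\hat\delta(y,r)$. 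The primal optimizer and the first conjugacy relation come from the symmetric construction with $(x,q)$ fixed and $\mathcal{D}^{(x,q)}:=\overline{\bigcup_{xy+qr\le 1}\mathcal{D}(y,r)}$.

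Your argument for $\partial u(x,q)\subseteq\mathcal{L}^*$ is also incomplete. Weak duality and $v<\infty$ on $\mathcal{L}^*$ only pin subgradients to $\bar{\mathcal{L}}$; ruling out the boundary ray $\{r=-(\alpha\lambda)y\}$ requires the extra input that the dual optimizer $\hat\delta^{(x,q)}=U'(\cdot,\hat c(x,q))$ is strictly positive. The paper packages this as Lemma~\ref{lem:limit-in-dual}: if $\delta^n\in\mathcal{D}(y_n,r_n)$ converge to a strictly positive $\delta$ and $(y_n,r_n)\to(y,r)$, then $(y,r)\in\mathcal{L}^*$ and $\delta\in\mathcal{D}(y,r)$. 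This is used twice---once to place $\hat\delta^{(x,q)}$ in some $\mathcal{D}(y,r)$ (it is a priori only in the closure $\mathcal{D}^{(x,q)}$), and once to push a limiting subgradient from $\bar{\mathcal{L}}$ into $\mathcal{L}^*$.
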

\begin{rem}\label{rem:duality-thm}
\begin{enumerate}
\item[(i)]Since each of the sets $\mathcal{C}(x,q)$, $(x,q)\in\mathcal{K}$, contains the strictly positive constant consumption plan $c\equiv x/\alpha$, the condition that $u(x,q)>-\infty$ for all $(x,q)$ in $\mathcal{K}$ is satisfied if $\mathbb{E}\left[\int_0^{\hat{T}} U^-(t,z)d\kappa_t\right]<\infty$ for all $z>0$.
\item[(ii)] It follows from the proof of Proposition \ref{prop:conjugate-rel}(ii) that for all $(y,r)\in\mathcal{L}^*$ the set $\mathcal{D}(y,r)$ contains $\mathcal{D}(\varepsilon)$ for $\varepsilon>0$ small enough. Hence, the condition that $v(y,r)<\infty$ for all  $(y,r)\in\mathcal{L}^*$ is equivalent to $v(y,0)<\infty$ for all $y>0$.

Furthermore, since ${D}^0(\varepsilon)\subseteq\mathcal{D}(\varepsilon)$ by Remark \ref{rem:mon}, we have
$$v(y,r)\leq \inf_{\delta\in{D}^0(\varepsilon)}\mathbb{E}\left[\int_0^{\hat{T}} V(t,\delta_t)d\kappa_t\right]\quad \text{for some }\varepsilon>0.$$
The infimum on the right-hand side is the dual value function of the unconstrained problem, when both the drawdown and the essential lower bound constraints are dropped ($v(\varepsilon)$, in the notation of \cite{mostovyi}). Therefore, the condition that $v(y,r)<\infty$ for all $(y,r)\in\mathcal{L}^*$ is satisfied as long as the dual value function of the unconstrained problem is finite (from above), in particular, it is always true if the unconstrained problem satisfies the assumptions of \cite{mostovyi}, Theorem 2.3.
\item[(iii)]
Under Assumptions \ref{ass:clock}, \ref{ass:NA}, \ref{ass:utility} and assuming $u(x,0)>-\infty$ and $v(y,0)<\infty$ for all $x,y>0$ (in particular, under assumptions of Theorem \ref{thm:main-duality}) we can apply \cite[Theorem 3.2]{mostovyi} to the sets $\mathcal{C}:=\mathcal{C}^\lambda$ and $\mathcal{D}:=\mathcal{D}^\lambda$ for $\lambda\in(0,1]$ and thus solve the initial problem \eqref{eq:primal-problem} in case $q=0$. We use this observation in the proof of Proposition \ref{prop:complete-case} below. In particular, for the case $\lambda=1$, Theorem 3.2 in \cite{mostovyi} applied to the sets $\mathcal{C}^1$ and $\mathcal{D}^1$ is an analogue of Theorem 3.2 in \cite{BK}.
\end{enumerate}
\end{rem}
The following two lemmas are analogues of Lemmas 11 and 12 in \cite{hug-kramkov} for our setting and will help us prove Theorem \ref{thm:main-duality}.
\begin{lem}\label{lem:sup-over-set}
Let $\mathcal{E}\subseteq L_+^0(\Omega\times[0,{\hat{T}}),\mathcal{O},\mathbb{P}\times d\kappa)$ be a convex set. If for every $\varepsilon>0$ there exists $c^\varepsilon\in \varepsilon\mathcal{E}$ such that
$$\mathbb{E}\left[\int_0^{\hat{T}} U(t,c^\varepsilon_t)d\kappa_t\right]>-\infty,$$
then, for every $x>0$,
$$\sup_{c\in x\mathcal{E}}\mathbb{E}\left[\int_0^{\hat{T}} U(t,c_t)d\kappa_t\right]=\sup_{c\in x\text{cl}(\mathcal{E})}\mathbb{E}\left[\int_0^{\hat{T}} U(t,c_t)d\kappa_t\right],$$
where $\text{cl}(\mathcal{E})$ denotes the closure of $\mathcal{E}$ with respect to convergence in measure $\mathbb{P}\times d\kappa$.
\end{lem}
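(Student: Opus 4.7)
The inequality $\sup_{c\in x\mathcal{E}}\mathbb{E}[\int U(c)d\kappa]\le \sup_{c\in x\text{cl}(\mathcal{E})}\mathbb{E}[\int U(c)d\kappa]$ is immediate from the inclusion $x\mathcal{E}\subseteq x\text{cl}(\mathcal{E})$, so the plan is to establish the reverse direction. Fix $c\in x\text{cl}(\mathcal{E})$; if $\mathbb{E}[\int_0^{\hat T} U(t,c_t)d\kappa_t]=-\infty$, there is nothing to prove, so I may assume the contrary. Since $c\in x\text{cl}(\mathcal{E})=\text{cl}(x\mathcal{E})$, there is a sequence $(c^n)\subseteq x\mathcal{E}$ with $c^n\to c$ in $\mathbb{P}\times d\kappa$-measure. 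Taking forward convex combinations (which remain in $x\mathcal{E}$ by convexity) and passing to a further subsequence, I can arrange $c^n\to c$ $\mathbb{P}\times d\kappa$-a.e.

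The key construction is to perturb $c^n$ so as to acquire an integrable lower bound on $U(c^n)$ from the hypothesis. For each $\theta\in(0,1)$, invoking the hypothesis with $\varepsilon=\theta x$, I pick $c^{\theta x}\in\theta x\mathcal{E}$ with $\mathbb{E}[\int U(t,c^{\theta x}_t)d\kappa_t]>-\infty$, and I set
$$\tilde c^{n,\theta}:=(1-\theta)c^n+c^{\theta x},\qquad n\ge1.$$
Writing this as a convex combination of $c^n/x\in\mathcal{E}$ and $c^{\theta x}/(\theta x)\in\mathcal{E}$ with weights $(1-\theta)$ and $\theta$, we see $\tilde c^{n,\theta}\in x\mathcal{E}$. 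Since $\tilde c^{n,\theta}\ge c^{\theta x}$ and $U$ is increasing, $U^-(t,\tilde c^{n,\theta}_t)\le U^-(t,c^{\theta x}_t)$, which is integrable \emph{uniformly in $n$} for fixed $\theta$. Continuity of $U$ yields $U(t,\tilde c^{n,\theta}_t)\to U(t,(1-\theta)c_t+c^{\theta x}_t)$ a.e. as $n\to\infty$, so Fatou's lemma delivers
\begin{equation*}
\sup_{c'\in x\mathcal{E}}\mathbb{E}\left[\int_0^{\hat T} U(t,c'_t)d\kappa_t\right]\ge \liminf_n \mathbb{E}\left[\int_0^{\hat T} U(t,\tilde c^{n,\theta}_t)d\kappa_t\right]\ge \mathbb{E}\left[\int_0^{\hat T} U(t,(1-\theta)c_t+c^{\theta x}_t)d\kappa_t\right].
\end{equation*}

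To finish, I would let $\theta\downarrow 0$. Concavity of $U(\omega,t,\cdot)$ gives the pointwise bound
$$U(t,(1-\theta)c_t+c^{\theta x}_t)\ge (1-\theta)U(t,c_t)+\theta U\bigl(t,c^{\theta x}_t/\theta\bigr),$$
and since $c^{\theta x}/\theta\in x\mathcal{E}$, the second term integrates to at most $\theta\cdot \sup_{c'\in x\mathcal{E}}\mathbb{E}[\int U(c')d\kappa]$; if this sup is $+\infty$ the desired equality is trivial, otherwise this upper bound vanishes as $\theta\downarrow 0$. Combined with $(1-\theta)\mathbb{E}[\int U(c)d\kappa]\to \mathbb{E}[\int U(c)d\kappa]$, this would give the sought inequality.

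The main obstacle I anticipate is controlling the $\theta$-scaled term $\theta\,\mathbb{E}[\int U(c^{\theta x}_t/\theta)d\kappa_t]$ from below as $\theta\downarrow 0$: the hypothesis furnishes only \emph{some} $c^{\theta x}$ with finite integrated utility, not one whose utility is bounded below uniformly in $\theta$. I would resolve this by selecting $c^{\theta x}$ so that $c^{\theta x}/\theta$ is an almost-optimizer of the sup over $x\mathcal{E}$ (so that $\mathbb{E}[\int U(c^{\theta x}/\theta)d\kappa]$ stays close to the finite supremum); as an alternative route, I would bound $U((1-\theta)c+c^{\theta x})\ge U((1-\theta)c)$ by monotonicity and apply monotone convergence to $U((1-\theta)c)\uparrow U(c)$, once the integrability of $U^-((1-\theta_0)c)$ for some $\theta_0>0$ is verified via a similar hypothesis-based perturbation.
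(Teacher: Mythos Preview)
Your core idea---perturbing the approximating sequence by an element that provides an integrable lower bound for Fatou---matches the paper's. The difference is that you force the perturbed sequence $(1-\theta)c^n+c^{\theta x}$ to remain in $x\mathcal{E}$, whereas the paper simply takes $c^n+c^\varepsilon\in(x+\varepsilon)\mathcal{E}$. This seemingly minor choice is exactly where your argument stalls.

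Both of your proposed fixes for the limit $\theta\downarrow 0$ fail as stated. For the first: choosing $c^{\theta x}=\theta c'$ with $c'$ a near-optimizer in $x\mathcal{E}$ does control $\theta\,\mathbb{E}[\int U(c^{\theta x}/\theta)\,d\kappa]$ from below, but then $c^{\theta x}=\theta c'\le c'$ and nothing guarantees $U^-(\theta c')$ is integrable (think $U=\log$), so the Fatou step upstream breaks. For the second: integrability of $U^-\big((1-\theta_0)c\big)$ is not a consequence of integrability of $U^-(c)$; shrinking the argument can make $U^-$ blow up, and the hypothesis only supplies \emph{some} element of each $\varepsilon\mathcal{E}$ with finite utility, not one related to your particular $c$.

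The missing ingredient is continuity of the concave value function. The paper sets $\phi(z)=\sup_{c\in z\mathcal{E}}\mathbb{E}[\int U(c)\,d\kappa]$, adds $c^\varepsilon$ without rescaling $c^n$, obtains $\mathbb{E}[\int U(c)\,d\kappa]\le\phi(x+\varepsilon)$ via Fatou, and finishes with $\phi(x+\varepsilon)\to\phi(x)$ by concavity. Your route can be rescued by the same device on the other side: from your established inequality $\phi(x)\ge\mathbb{E}[\int U((1-\theta)c+c^{\theta x})\,d\kappa]\ge\mathbb{E}[\int U((1-\theta)c)\,d\kappa]$ (monotonicity), take the supremum over $c\in x\,\mathrm{cl}(\mathcal{E})$ to get $\phi(x)\ge\psi\big((1-\theta)x\big)$, and then send $\theta\downarrow 0$ using continuity of the concave function $\psi$. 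Without invoking this continuity somewhere, the $\theta\downarrow 0$ step cannot be closed under the given hypotheses.
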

The proof below follows very closely the proof of Lemma 11 in \cite{hug-kramkov}, except a small adjustment due to the stochastic utility and working on the product space $\Omega\times[0,{\hat{T}})$.
\begin{proof}
Denote, for $x>0$,
$$\phi(x):=\sup_{c\in x\mathcal{E}}\mathbb{E}\left[\int_0^{\hat{T}} U(t,c_t)d\kappa_t\right]\quad\text{and}\quad \psi(x):=\sup_{c\in x\text{cl}(\mathcal{E})}\mathbb{E}\left[\int_0^{\hat{T}} U(t,c_t)d\kappa_t\right].$$
Clearly, $\phi$ and $\psi$ are concave functions and $\psi\geq\phi>-\infty$ on $(0,\infty)$. If $\phi(x)=\infty$ for some $x>0$, then, due to concavity, $\phi$ is infinite on entire $(0,\infty)$ and the assertion of the lemma is trivial. Hereafter we assume that $\phi$ is finite.

Fix $x>0$ and $c\in x\text{cl}(\mathcal{E})$. Let $(c^n)_{n\geq 1}$ be a sequence in $x\mathcal{E}$ that converges $\mathbb{P}\times d\kappa-$a.e. to $c$. For any $\varepsilon>0$, we have
\begin{align*}
\mathbb{E}\left[\int_0^{\hat{T}} U(t,c_t)d\kappa_t\right]&\leq \mathbb{E}\left[\int_0^{\hat{T}} U(t,c_t+c^\varepsilon_t)d\kappa_t\right]\\
&\leq \liminf_{n\to\infty}\mathbb{E}\left[\int_0^{\hat{T}} U(t,c^n_t+c^\varepsilon_t)d\kappa_t\right]\leq\phi(x+\varepsilon),
\end{align*}
where the first inequality holds true because $U$ is increasing, the second one follows from Fatou's lemma, since $U(t,c^n_t+c^\varepsilon_t)\to U(t,c_t+c^\varepsilon_t)$ almost surely and all the terms are bounded from below by the integrable process $-U^{-}(t, c^\varepsilon_t)$, and the third one follows from the fact that $\mathcal{E}$ is convex and therefore $c^n+c^\varepsilon\in (x+\varepsilon)\mathcal{E}$ for $n\geq 1$. Since $\phi$ is concave, it is continuous. It follows that
$$\psi(x)=\sup_{c\in x\text{cl}(\mathcal{E})}\mathbb{E}\left[\int_0^{\hat{T}} U(t,c_t)d\kappa_t\right]\leq\lim_{\varepsilon\downarrow 0}\phi(x+\varepsilon)=\phi(x).$$
\end{proof}

\begin{lem}\label{lem:limit-in-dual}
Let $(y_n,r_n)\in\mathcal{L}^*$ and $\delta^n\in\mathcal{D}(y_n,r_n)$, $n\geq 1$, converge to $(y,r)$ and to an optional process $\delta\geq 0$, respectively. If $\delta>0$, $\mathbb{P}\times d\kappa-$a.e., then $(y,r)\in\mathcal{L}^*$ and $\delta\in\mathcal{D}(y,r)$.
\end{lem}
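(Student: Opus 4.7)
The plan is to verify $(y,r)\in\mathcal{L}^*$ (i.e., $y>0$, $r\leq 0$, and $r>-\alpha\lambda y$), and then that $\delta$ satisfies the inequality characterizing $\mathcal{D}(y,r)$ in \eqref{eq:D-characterization}. After passing to a subsequence (which preserves the limits $(y_n,r_n)\to(y,r)$ and $\delta^n\to\delta$), I may assume $\delta^n\to\delta$ also $\mathbb{P}\times d\kappa$-a.e., so that Fatou's lemma provides $\langle c,\delta\rangle\leq\liminf_n\langle c,\delta^n\rangle$ for every non-negative optional $c$. Testing $\delta^n\in\mathcal{D}(y_n,r_n)\subseteq\mathcal{D}(y_n)$ against the constant plan $c\equiv 1/\alpha\in\mathcal{C}$ (for which $c\vee\lambda\bar{c}=1/\alpha$ is $1$-admissible by Proposition~\ref{prop:alpha-for-D}) yields $(1/\alpha)\mathbb{E}[\int_0^{\hat{T}}\delta\,d\kappa]\leq y$ in the limit; since $\delta>0$ a.e., this forces $y>0$. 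Also $r\leq 0$ follows trivially from $r_n\leq 0$.

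The main obstacle is the strict inequality $r>-\alpha\lambda y$, and this is precisely where the hypothesis $\delta>0$ a.e.\ enters essentially: without it, the trivial process $\delta\equiv 0$ would give $r=-\alpha\lambda y$, lying on the forbidden boundary of $\mathcal{L}^*$. Fix $x>0$ and, for each $q\in(0,x/(\alpha\lambda))$, take the constant plan $c\equiv\lambda q$; since $c\vee\lambda(\bar{c}\vee q)=\lambda q$ is $x$-admissible (because $\lambda q\cdot\alpha<x$), one has $c\in\mathcal{C}(x,q)$. Testing with this $c$ yields $\lambda q\,\mathbb{E}[\int_0^{\hat{T}}\delta^n\,d\kappa]\leq xy_n+qr_n$, and Fatou followed by $n\to\infty$ produces
$$\lambda q\,\mathbb{E}\!\left[\int_0^{\hat{T}}\delta\,d\kappa\right]\leq xy+qr.$$
Dividing by $q>0$ and letting $q\uparrow x/(\alpha\lambda)$ gives $\lambda\,\mathbb{E}[\int_0^{\hat{T}}\delta\,d\kappa]\leq\alpha\lambda y+r$, and strict positivity of $\delta$ upgrades this to $r>-\alpha\lambda y$, so $(y,r)\in\mathcal{L}^*$.

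Finally, for every $(x,q)\in\mathcal{K}$ and every $c\in\mathcal{C}(x,q)$, Fatou's lemma applied to the inequalities $\langle c,\delta^n\rangle\leq xy_n+qr_n$ (valid since $\delta^n\in\mathcal{D}(y_n,r_n)$) yields $\langle c,\delta\rangle\leq xy+qr$. By the characterization \eqref{eq:D-characterization} in Proposition~\ref{prop:conjugate-rel}(ii) this places $\delta$ in $\mathcal{D}(y,r)$, concluding the proof.
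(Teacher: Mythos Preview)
Your proof is correct and follows essentially the same approach as the paper's: both use constant test plans in $\mathcal{C}(x,q)$, apply Fatou's lemma to pass to the limit in the defining inequalities of $\mathcal{D}(y_n,r_n)$, and exploit strict positivity of $\delta$ to obtain the strict inequality $r>-\alpha\lambda y$. The only cosmetic difference is that the paper uses the single test plan $c\equiv x/\alpha\in\mathcal{C}(x,q)$ for all $(x,q)\in\mathcal{K}$ (and then lets $q\uparrow x/(\alpha\lambda)$ by continuity), whereas you split the task between $c\equiv 1/\alpha$ and $c\equiv\lambda q$; both choices lead to the same inequality $0<\langle\text{const},\delta\rangle\leq xy+qr$ along the boundary $q=x/(\alpha\lambda)$.
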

\begin{proof}
Let $(x,q)\in\mathcal{K}$. Since the constant process $x/\alpha$ belongs to $\mathcal{C}(x,q)$, by Proposition~\ref{prop:conjugate-rel}, $\langle x/\alpha,\delta^n\rangle \leq xy_n+qr_n$ for $n\geq1$. Then, by Fatou's lemma,
\begin{equation}\label{eq:bdry-of-K}
0<\langle x/\alpha,\delta\rangle\leq xy+qr.
\end{equation}
Note that since the second inequality in \eqref{eq:bdry-of-K} holds for all $q<x/(\alpha\lambda)$, it holds for $q=x/(\alpha\lambda)$ by continuity as well. Since $xy+qr>0$ for all $(x,q)\in\mathcal{K}\cup\{(x',q'):x'>0, q'=x'/(\alpha\lambda)\}$, we have $(y,r)\in\mathcal{L}^*.$ Finally, Fatou's lemma and Proposition \ref{prop:conjugate-rel} imply that $\delta\in\mathcal{D}(y,r)$.
\end{proof}

\begin{proof}[Proof of Theorem \ref{thm:main-duality}]
Since $V(\omega,t,y)+xy\geq U(\omega,t,x)$ for all $x,y>0$ and $(\omega,t)\in\Omega\times[0,\hat{T})$ and since $v(y,r)<\infty$ for some $(y,r)\in\mathcal{L}^*$, we deduce from Proposition \ref{prop:conjugate-rel} that $u(x,q)<\infty$ for all $(x,q)\in\mathcal{K}$. Analogously, since $u(x,q)>-\infty$ for some $(x,q)\in\mathcal{K}$, $v(y,r)>-\infty$ for all $(y,r)\in\mathcal{L}^*$. Hence, $u$ and $v$ are both finite on $\mathcal{K}$ and $\mathcal{L}^*$, respectively.

For $(y,r)\in\mathcal{L}^*$, we define the sets
\begin{equation*}
\begin{aligned}
A(y,r)&=\{(x,q)\in\mathcal{K}: xy+qr\leq 1\},\\
\tilde{\mathcal{C}}^{(y,r)}&=\bigcup_{(x,q)\in A(y,r)}\mathcal{C}(x,q),
\end{aligned}
\end{equation*}
and denote by $\mathcal{C}^{(y,r)}$ the closure of $\tilde{\mathcal{C}}^{(y,r)}$ with respect to convergence in measure $\mathbb{P}\times d\kappa$. By Lemma \ref{lem:sup-over-set}, for $z>0$,
$$\sup_{c\in z\mathcal{C}^{(y,r)}}\mathbb{E}\left[\int_0^{\hat{T}} U(t,c_t)d\kappa_t\right]=\sup_{c\in z\tilde{\mathcal{C}}^{(y,r)}}\mathbb{E}\left[\int_0^{\hat{T}} U(t,c_t)d\kappa_t\right]=\sup_{(x,q)\in zA(y,r)} u(x,q)>-\infty.$$
By Proposition \ref{prop:conjugate-rel}, the sets $\mathcal{D}(y,r)$ and $\mathcal{C}^{(y,r)}$ are polar sets of each other. Therefore, they satisfy the assumptions of \cite{mostovyi}, Theorem 3.2. This theorem implies that there exists a unique solution $\hat{\delta}(y,r)$ to \eqref{eq:dual-problem} and the second conjugacy relation in \eqref{eq:conjugacy-rel} holds:
\begin{equation}\label{eq:1st-conj-rel}
\begin{aligned}
v(y,r)&=\sup_{z>0}\left\{\sup_{c\in z\mathcal{C}^{(y,r)}}\mathbb{E}\left[\int_0^{\hat{T}} U(t,c_t)d\kappa_t\right]-z\right\}=\sup_{z>0, (x,q)\in zA(y,r)}\{u(x,q)-z\}\\
&=\sup_{(x,q)\in\mathcal{K},z\geq xy+qr}\{u(x,q)-z\}=\sup_{(x,q)\in\mathcal{K}}\{u(x,q)-xy-qr\}.
\end{aligned}
\end{equation}
The function $u$ is clearly concave on $\mathcal{K}$. The first equation in \eqref{eq:conjugacy-rel} follows from \cite{rockafellar}, Section 12.

For $(x,q)\in\mathcal{K}$, we define the sets
\begin{equation*}
\begin{aligned}
B(x,q)&=\{(y,r)\in\mathcal{L}^*: xy+qr\leq 1\},\\
\tilde{\mathcal{D}}^{(x,q)}&=\bigcup_{(y,r)\in B(x,q)}\mathcal{D}(y,r),
\end{aligned}
\end{equation*}
and denote by $\mathcal{D}^{(x,q)}$ the closure of $\tilde{\mathcal{D}}^{(x,q)}$ with respect to convergence in measure $\mathbb{P}\times d\kappa$. Clearly, for $z>0$ we have
$$\inf_{\delta\in z\mathcal{D}^{(x,q)}}\mathbb{E}\left[\int_0^{\hat{T}} V(t,\delta_t)d\kappa_t\right]\leq \inf_{\delta\in z\tilde{\mathcal{D}}^{(x,q)}}\mathbb{E}\left[\int_0^{\hat{T}} V(t,\delta_t)d\kappa_t\right]=\inf_{(y,r)\in zB(x,q)}v(y,r)<\infty.$$
By Proposition \ref{prop:conjugate-rel}, the sets $\mathcal{C}(x,q)$ and $\mathcal{D}^{(x,q)}$ are polar sets of each other. Therefore, they satisfy the assumptions of \cite{mostovyi}, Theorem 3.2, and this theorem implies that there exists a unique solution $\hat{c}(x,q)$ to \eqref{eq:primal-problem}. Moreover, denoting
$$\hat{\delta}_t^{(x,q)}:=U'(t,\hat{c}_t(x,q))\quad\text{and}\quad z:=\langle \hat{c}(x,q),\hat{\delta}^{(x,q)}\rangle,$$
we deduce from \cite{mostovyi}, Theorem 3.2, that $\hat{\delta}_t^{(x,q)}\in z\mathcal{D}^{(x,q)}$ and that it is the unique solution of the optimization problem
$$\mathbb{E}\left[\int_0^{\hat{T}} V(t,\hat{\delta}_t^{(x,q)})d\kappa_t\right]=\inf_{\delta\in z\mathcal{D}^{(x,q)}}\mathbb{E}\left[\int_0^{\hat{T}} V(t,\delta_t)d\kappa_t\right].$$
Since $\hat{\delta}_t^{(x,q)}>0$, $\mathbb{P}\times d\kappa-$a.e., and since the set $zB(x,q)\subseteq\mathbb{R}^2$ is bounded, Lemma \ref{lem:limit-in-dual} implies the existence of $(y,r)\in zB(x,q)$ such that $\hat{\delta}_t^{(x,q)}\in\mathcal{D}(y,r)$. Since $\hat{\delta}_t^{(x,q)}\in\mathcal{D}(y,r)$ is the optimizer on $z\mathcal{D}^{(x,q)}$,
$$xy+qr=z=\langle \hat{c}(x,q),\hat{\delta}^{(x,q)}\rangle\quad\text{and}\quad \hat{\delta}_t(y,r)=\hat{\delta}_t^{(x,q)}=U'(t,\hat{c}_t(x,q)).$$
Further, the pointwise equality $U(t,\hat{c}_t(x,q))=V(t,\hat{\delta}_t(y,r))+\hat{c}_t(x,q)\hat{\delta}_t(y,r)$ implies
\begin{equation}\label{eq:equality}
u(x,q)=v(y,r)+xy+qr,
\end{equation}
which, according to \cite{rockafellar}, Theorem 23.5, is equivalent to $(y,r)\in\partial u(x,q)$. In particular, $\partial u(x,q)\cap\mathcal{L}^*\neq\emptyset$.

Conversely, if $(y,r)\in\partial u(x,q)\cap\mathcal{L}^*$, i.e., if \eqref{eq:equality} holds, then
\begin{align*}
0&\leq\mathbb{E}\left[\int_0^{\hat{T}} V(t,\hat{\delta}_t(y,r))+\hat{c}_t(x,q)\hat{\delta}_t(y,r)-U(t,\hat{c}_t(x,q))d\kappa_t\right]\\
&\leq v(y,r)+xy+qr-u(x,q)=0,
\end{align*}
which immediately implies the relations \eqref{eq:dual-relations-thm}.
Finally, to show that $\partial u(x,q)\subseteq\mathcal{L}^*$, let $(y,r)\in\partial u(x,q)$. Since $\partial u(x,q)$ is a closed convex subset of $\bar{\mathcal{L}}$ and since $\partial u(x,q)\cap\mathcal{L}^*\neq\emptyset$, there is a sequence $(y_n,r_n)$ in $\partial u(x,q)\cap\mathcal{L}^*$ that converges to $(y,r)$. Since each of the sets $\mathcal{D}(y_n,r_n)$ contains the strictly positive process $U'(t,\hat{c}_t(x,q))$, Lemma \ref{lem:limit-in-dual} implies that $(y,r)\in\mathcal{L}^*$.
\end{proof}


\section{Complete market}\label{sec:complete}

In this section, we examine the complete market case, when $\mathcal{Z}=\left\{ Z \right\}$ is a singleton, in much greater detail. This becomes possible due to the fact that the description of the dual set $\mathcal{D}^\lambda$ given in Proposition~\ref{prop:min-of-D} simplifies significantly in a complete market:

\begin{prop}\label{prop:complete-case-D}
If $\mathcal{Z}=\left\{ Z \right\}$ is a singleton then
\begin{equation}\label{eq:description-D-complete}
\mathcal{D}^\lambda=\{\delta\geq 0\text{ optional}: \delta\preceq_\lambda Z\},\quad \lambda\in[0,1].
\end{equation}
In particular, for any consumption plan $c$ satisfying \eqref{cond:ddc}, $\sup_{\delta\in\mathcal{D}^\lambda}\langle c,\delta \rangle=\langle c,Z \rangle$.
\end{prop}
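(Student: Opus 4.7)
The plan is to leverage the minimality characterization of $\mathcal{D}^\lambda$ in Proposition \ref{prop:min-of-D} together with the dual testing characterization \eqref{eq:monotonicity-lambda-ord} of the ordering $\preceq_\lambda$. Denoting $\mathcal{D}':=\{\delta\geq 0\text{ optional}: \delta\preceq_\lambda Z\}$, I would prove the two inclusions $\mathcal{D}'\subseteq\mathcal{D}^\lambda$ and $\mathcal{D}^\lambda\subseteq\mathcal{D}'$ separately, and then read off the final assertion from \eqref{eq:monotonicity-lambda-ord}.

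For the inclusion $\mathcal{D}'\subseteq\mathcal{D}^\lambda$, I would invoke the $\preceq_\lambda$-solidity statement of Proposition \ref{prop:min-of-D}: since $Z\in\mathcal{Z}\subseteq\mathcal{D}^\lambda$, any $\delta\preceq_\lambda Z$ lies in $\mathcal{D}^\lambda$. This half is essentially free.

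The content lies in the reverse inclusion $\mathcal{D}^\lambda\subseteq\mathcal{D}'$, which I would obtain from the minimality part of Proposition \ref{prop:min-of-D} by verifying that $\mathcal{D}'$ (i) contains $Z$, (ii) is convex, (iii) is $\preceq_\lambda$-solid, and (iv) is closed under convergence in measure $\mathbb{P}\times d\kappa$. The dual characterization \eqref{eq:monotonicity-lambda-ord} trivializes all four: (i) is the tautology $\langle c,Z\rangle\leq\langle c,Z\rangle$; (ii) follows from the linearity of $c\mapsto\langle c,\cdot\rangle$ applied to $\theta\delta_1+(1-\theta)\delta_2$ when $\delta_1,\delta_2\preceq_\lambda Z$; (iii) is transitivity of the ordinary inequality $\leq$ applied pairwise in \eqref{eq:monotonicity-lambda-ord}; and (iv) follows by passing to an a.e.\ convergent subsequence of $\delta_n\to\delta$ and applying Fatou's lemma to $\langle c,\delta\rangle\leq\liminf_n\langle c,\delta_n\rangle\leq\langle c,Z\rangle$ for every $c$ satisfying \eqref{cond:ddc}.

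The final statement then comes directly from \eqref{eq:monotonicity-lambda-ord}: for any $c$ satisfying \eqref{cond:ddc} and $\delta\in\mathcal{D}^\lambda$, the characterization gives $\langle c,\delta\rangle\leq\langle c,Z\rangle$, while the sup is attained at $\delta=Z\in\mathcal{D}^\lambda$. I do not foresee a serious obstacle: the whole proof rests on invoking the right pre-established tools, and \eqref{eq:monotonicity-lambda-ord} is powerful enough that trying to verify the four properties directly from Definition \ref{def:lambda-ordering} (especially closedness under convergence in measure) would be noticeably more painful — this is the only spot where I would be careful to avoid unnecessary work.
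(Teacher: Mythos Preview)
Your proposal is correct and follows essentially the same route as the paper: verify that $\mathcal{D}':=\{\delta\preceq_\lambda Z\}$ contains $Z$, is convex, $\preceq_\lambda$-solid, and closed (the paper treats the first three as ``clear'' and proves closedness exactly as you do, via Fatou and \eqref{eq:monotonicity-lambda-ord}), then invoke the minimality in Proposition~\ref{prop:min-of-D}; the final assertion likewise comes from \eqref{eq:monotonicity-lambda-ord}.
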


\begin{proof}
Clearly, the set on the right-hand side of \eqref{eq:description-D-complete} is $\preceq_\lambda$-solid, convex, and contains $Z$. By Proposition \ref{prop:min-of-D}, it remains to verify that it is closed with respect to convergence in measure $\mathbb{P}\times d\kappa$. Let $\delta^n\preceq_\lambda Z$, $n\geq 1$, be a sequence converging almost surely to an optional process $\delta\geq 0$. To show that $\delta\preceq_\lambda Z$, we check the right-hand side of \eqref{eq:monotonicity-lambda-ord}: for every $c$ satisfying \eqref{cond:ddc}, by Fatou's lemma and by \eqref{eq:monotonicity-lambda-ord} applied to $\delta^n\preceq_\lambda Z$,
$$\langle c, \delta \rangle\leq\liminf_{n\to\infty}\langle c, \delta^n \rangle\leq\langle c, Z\rangle.$$
The last assertion follows from \eqref{eq:monotonicity-lambda-ord} as well.
\end{proof}

We note that from \eqref{eq:description-D-complete} it is easy to show the strict inclusion $\mathcal{D}^{\lambda_2}\supsetneq\mathcal{D}^{\lambda_1}$ for $1\geq\lambda_2>\lambda_1\geq0$ in a complete market. Namely, it suffices to find a process $\delta$ such that $\delta\preceq_{\lambda_2}Z$ but $\delta\npreceq_{\lambda_1}Z$. One can take a stopping time $T\in(0,\hat{T}]$ such that $\mathbb{P}(T<\hat{T})>0$, define
$$\delta_t:=Z_t\mathbbm{1}_{[0,T]}+\delta^2_t,\quad \text{where}\quad\delta^2_t:=\lambda_2\cdot{}^o\left(\frac{\mathbbm{1}_{[0,T]}}{d\kappa([0,T])}\int_{T}^{\hat{T}}Z d\kappa\right)_t,$$
and check the required properties which, by Proposition \ref{prop:lambda-ord-alternative}, are equivalent to $\delta^2\preceq \lambda_2 Z\mathbbm{1}_{(T,\hat{T})}$ and $\delta^2\npreceq \lambda_1 Z\mathbbm{1}_{(T,\hat{T})}$.

\subsection{Case $q=0$}

First, we consider the case $q=0$, when there is no lower bound on consumption -- only the drawdown constraint. As in Section \ref{sec:optimization}, the results hold for any $\lambda\in(0,1]$, unless specified otherwise, and we often omit mentioning the dependence of domains, value functions, and optimizers on $\lambda$. For brevity, we denote $u(x)=u(x,0)$, $\hat{c}(x)=\hat{c}(x,0)$, $v(y)=v(y,0)$, $\hat{\delta}(y)=\hat{\delta}(y,0)$ for $x,y>0$.
\begin{prop}\label{prop:complete-case}
Suppose all the assumptions of Theorem \ref{thm:main-duality} hold and the market is complete. Let $x>0$, $y=u'(x)$, and assume a consumption plan $c$ satisfies \eqref{cond:ddc} and $\langle c,Z \rangle=x$. Then $c\in\mathcal{C}(x)$. The consumption plan $c$ is the optimizer in $\mathcal{C}(x)$ if and only if for $\hat\delta_t:=U'(t,c_t)$ the following holds:
\begin{enumerate}
\item $\{yZ_t>\hat\delta_t\}\subseteq\{c_t=\lambda\bar{c}_t\}$, up to $\mathbb{P}\times d\kappa-$nullsets;
\item $\{\hat\delta_t>yZ_t\}\subseteq\{c_t=\bar{c}_t\}$, up to $\mathbb{P}\times d\kappa-$nullsets;
\item $\mathbb{P}$-almost surely,
\begin{equation*}
\begin{aligned}
^o\left(\int_.^{\hat{T}} (\hat\delta-yZ)\vee0 d\kappa\right)_t\leq{}^o\left(\int_.^{\hat{T}} \lambda(yZ-\hat\delta)\vee 0 d\kappa\right)_t,\quad\text{for all }t\in[0,\hat{T}),\\
\text{with equality }d\bar{c}_t-\text{almost everywhere.}
\end{aligned}
\end{equation*}
In this case, the optimizer  $c$ is related with its running essential supremum $\bar{c}$ in the following way (up to $\mathbb{P}\times d\kappa-$nullsets):
\begin{equation}\label{eq:c-through-c-bar}
c_t=\lambda\bar{c}_t\vee I(t,yZ_t)\wedge \bar{c}_t,\quad t\in[0,\hat{T}),
\end{equation}
and, $\mathbb{P}$-almost surely, $\bar{c}$ satisfies
\begin{equation}\label{eq:cond-on-c-bar}
\begin{aligned}
^o\left(\int_.^{\hat{T}} (U'(\bar{c})-yZ)\vee0 d\kappa\right)_t\leq{}^o\left(\int_.^{\hat{T}} \lambda(yZ-U'(\lambda\bar{c}))\vee 0 d\kappa\right)_t,\quad\text{for all }t\in[0,{\hat{T}}),\\
\text{with equality }d\bar{c}_t-\text{almost everywhere.}
\end{aligned}
\end{equation}
\end{enumerate}
\end{prop}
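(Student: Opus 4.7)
The plan is to invoke Theorem \ref{thm:main-duality} to reduce optimality of $c$ in $\mathcal{C}(x)$ to the two dual relations $\hat\delta := U'(c) \in \mathcal{D}(y)$ and $\langle c, \hat\delta \rangle = xy$ (with $y = u'(x)$), and then to translate these via Propositions \ref{prop:complete-case-D} and \ref{prop:lambda-ord-alternative} into conditions (1)--(3). For the membership $c \in \mathcal{C}(x)$, I note that \eqref{cond:ddc} gives $c \vee \lambda\bar c = c$, $\mathbb{P}\times d\kappa$-a.e., while in a complete market $\langle c, Z \rangle = x$ is the $x$-admissibility criterion \eqref{eq:x-admissible}. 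Since $\mathcal{C}(x, q) = \mathcal{C}(x)$ for $q \leq 0$, the value $u(x, \cdot)$ is constant on $(-\infty, 0]$ and one checks directly that $(y, 0) \in \partial u(x, 0)$. If $c$ is optimal, Theorem \ref{thm:main-duality}(3) supplies both relations; conversely, if they hold, the pointwise Fenchel identity $U(c) = V(\hat\delta) + c\hat\delta$ integrates to $\mathbb{E}\int_0^{\hat{T}} U(c)\, d\kappa = \mathbb{E}\int_0^{\hat{T}} V(\hat\delta)\, d\kappa + xy \geq v(y) + xy \geq u(x)$ by conjugacy \eqref{eq:conjugacy-rel}, forcing $c$ to be the unique primal optimizer.

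By Proposition \ref{prop:complete-case-D} and Proposition \ref{prop:lambda-ord-alternative}, $\hat\delta \in \mathcal{D}(y)$ is equivalent to $(\hat\delta - yZ)\vee 0 \preceq \lambda(yZ - \hat\delta)\vee 0$, which by \eqref{def:chron-ord} is exactly the chronological inequality in condition~(3). Writing $\tilde\delta^2 := (\hat\delta - yZ)\vee 0$ and $\delta^2 := (yZ - \hat\delta)\vee 0$ and assuming this inequality, the budget equality $\langle c, \hat\delta \rangle = xy = \langle c, yZ\rangle$ reduces to $\langle c, \delta^2 \rangle = \langle c, \tilde\delta^2 \rangle$. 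Combining $\lambda\bar c \leq c \leq \bar c$, $\mathbb{P}\times d\kappa$-a.e.\ (from \eqref{cond:ddc} and Proposition \ref{prop:increasing-dom}), with the consequence of $\tilde\delta^2 \preceq \lambda\delta^2$ against $\bar c \in \mathcal{C}_{\text{inc}}$ from \eqref{eq:monotonicity-chron-ord}, yields the tight chain
\begin{equation*}
\lambda\langle\bar c, \delta^2\rangle \;\leq\; \langle c, \delta^2 \rangle \;=\; \langle c, \tilde\delta^2 \rangle \;\leq\; \langle\bar c, \tilde\delta^2 \rangle \;\leq\; \lambda\langle\bar c, \delta^2 \rangle.
\end{equation*}
Equality throughout is equivalent to the conjunction of (1), (2), and the equality clause of (3): $\langle c, \delta^2 \rangle = \lambda\langle\bar c, \delta^2 \rangle$ forces $c = \lambda\bar c$ on $\{\delta^2 > 0\} = \{yZ > \hat\delta\}$ (giving (1)); $\langle c, \tilde\delta^2 \rangle = \langle\bar c, \tilde\delta^2 \rangle$ forces $c = \bar c$ on $\{\tilde\delta^2 > 0\} = \{\hat\delta > yZ\}$ (giving (2)); and $\langle\bar c, \tilde\delta^2 \rangle = \lambda\langle\bar c, \delta^2 \rangle$, after integration by parts \eqref{eq:IBP} and optional projection \eqref{eq:int-optional-proj}, amounts to the vanishing of the $d\bar c_t$-integral of the non-negative c\`adl\`ag process $D_t := {}^o\bigl(\int_{\cdot}^{\hat T} \lambda\delta^2\, d\kappa\bigr)_t - {}^o\bigl(\int_{\cdot}^{\hat T} \tilde\delta^2\, d\kappa\bigr)_t$. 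By Lemma \ref{lem:pt-of-increase} this is equivalent to $D_t = 0$ at every point of increase of $\bar c$, i.e., $d\bar c_t$-a.e., which is the last assertion in (3).

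Given (1) and (2), formula \eqref{eq:c-through-c-bar} is obtained by a case analysis applied to $c = I(\hat\delta)$: on $\{yZ > \hat\delta\}$, $c = \lambda\bar c$ and $I(yZ) < \lambda\bar c$, so the right-hand side returns $\lambda\bar c$; on $\{yZ < \hat\delta\}$, $c = \bar c$ and $I(yZ) > \bar c$, returning $\bar c$; on $\{yZ = \hat\delta\}$, $c = I(yZ) \in [\lambda\bar c, \bar c]$, also returning $I(yZ)$. The same case split, together with monotonicity of $U'$, yields $\{\hat\delta > yZ\} = \{U'(\bar c) > yZ\}$ and $\{\hat\delta < yZ\} = \{U'(\lambda\bar c) < yZ\}$, so $(\hat\delta - yZ)\vee 0 = (U'(\bar c) - yZ)\vee 0$ and $(yZ - \hat\delta)\vee 0 = (yZ - U'(\lambda\bar c))\vee 0$; substituting into the inequality and equality clauses of condition~(3) gives \eqref{eq:cond-on-c-bar}.

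The main obstacle lies in the second paragraph: extracting three measure-theoretic identities from the single scalar equality $\langle c, \hat\delta\rangle = xy$ via a precisely balanced inequality chain, and in particular identifying the ``equality $d\bar c_t$-a.e.'' clause of~(3) with pointwise coincidence of two optional projections at points of increase of $\bar c$. This last step requires the right-continuity hypothesis of Lemma \ref{lem:pt-of-increase}, which is satisfied by the explicit c\`adl\`ag representation $\mathbb{E}\bigl[\int_0^{\hat T} f\, d\kappa \,\big|\, \mathcal{F}_{\cdot}\bigr] - \int_0^{\cdot} f\, d\kappa$ of the optional projection of $\int_{\cdot}^{\hat T} f\, d\kappa$.
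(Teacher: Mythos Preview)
Your proof is correct and follows essentially the same route as the paper. The paper invokes Mostovyi's Theorem~3.2 directly (via Remark~\ref{rem:duality-thm}(iii)) rather than going through Theorem~\ref{thm:main-duality} and checking $(y,0)\in\partial u(x,0)$, but the conclusion is the same. Your inequality chain $\lambda\langle\bar c,\delta^2\rangle \le \langle c,\delta^2\rangle = \langle c,\tilde\delta^2\rangle \le \langle\bar c,\tilde\delta^2\rangle \le \lambda\langle\bar c,\delta^2\rangle$ is a compact rearrangement of the paper's chain~\eqref{eq:sequence}, isolating the same three inequalities whose saturation gives (1), (2), and the equality clause of (3). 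One small inaccuracy: the passage from ``vanishing of the $d\bar c_t$-integral of the non-negative $D_t$'' to ``$D_t=0$ $\mathbb{P}\times d\bar c_t$-a.e.'' is immediate and does not need Lemma~\ref{lem:pt-of-increase}; that lemma (and the c\`adl\`ag property you justify) is what the paper uses separately, in Remark~\ref{rem:pts-of-increase}, to pass between ``$d\bar c_t$-a.e.'' and ``at every point of increase''.
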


\begin{rem}[Interpretation]
Let $\lambda\in(0,1)$.We see that only the following three types of behavior are possible for the optimal consumption plan $c=\hat{c}(x)$ (up to $\mathbb{P}\times d\kappa-$nullsets):
 \begin{enumerate}
\item[(i)] $c_t=\lambda\bar{c}_t$. The agent consumes at the minimal level allowed by the drawdown constraint.
\item[(ii)] $c_t=\bar{c}_t$. The agent consumes at the current running essential supremum level.
\item[(iii)] $c_t=I(t,yZ_t)$. The agent consumes as an unconstrained agent with a different initial wealth $x_0$ given by $u_0'(x_0)=y$, where $u_0$ is the value function for the unconstrained problem.
\end{enumerate}
How exactly the timeline separates into these three regions is encoded in \eqref{eq:cond-on-c-bar}, however, this is more difficult to interpret. For the simpler ratchet constraint $\lambda=1$, see Corollary~\ref{cor:complete-env} below and the discussion after it.
\end{rem}

\begin{rem}\label{rem:pts-of-increase}
The jointly measurable process defined by 
$$D_t:=\int_t^{\hat{T}} \left(\hat\delta-yZ\right)\vee0 -\lambda\left(yZ-\hat\delta\right)\vee 0d\kappa,\quad t\in[0,\hat{T}),$$
is continuous. Condition 3.~of Proposition \ref{prop:complete-case} implies $\mathbb{E}[D_0]\leq 0$ and therefore $\mathbb{E}\left[\int_0^{\hat{T}}\hat\delta d\kappa\right]\leq\mathbb{E}\left[\int_0^{\hat{T}} yZ d\kappa\right]$. Hence  $\sup_{t\geq 0} \vert D_t\vert\leq \int_0^{\hat{T}} (\hat\delta+yZ) d\kappa$ is integrable and $D_t$ is uniformly integrable. By \cite{dellacherie-meyerB}, Chapter VI, Theorem no.~47 and Remark no.~50(f), the optional projection ${}^oD_t$ is right-continuous (in fact, c\`adl\`ag). Therefore, by Lemma \ref{lem:pt-of-increase}, condition 3. of Proposition \ref{prop:complete-case} is equivalent to the following: $\mathbb{P}$-almost surely,
\begin{equation*}
\begin{aligned}
^o\left(\int_.^{\hat{T}} (\hat\delta-yZ)\vee0 d\kappa\right)_t\leq{}^o\left(\int_.^{\hat{T}} \lambda(yZ-\hat\delta)\vee 0 d\kappa\right)_t,\quad\text{for all }t\in[0,{\hat{T}}),\\
\text{with equality when } d\bar{c}_t>0.
\end{aligned}
\end{equation*}
The same is true for \eqref{eq:cond-on-c-bar}: ``equality $d\bar{c}_t-$almost everywhere" can be replaced with ``equality when $d\bar{c}_t>0$".
\end{rem}

\begin{proof}[Proof of Proposition \ref{prop:complete-case}]
The consumption plan $c$ belongs to $\mathcal{C}(x)$ because it satisfies \eqref{cond:ddc} and is $x$-admissible by Proposition \ref{prop:complete-case-D}: $\sup_{\delta\in\mathcal{D}^\lambda}\langle c,\delta \rangle=\langle c,Z \rangle=x$.

``$\Rightarrow$''. Assume that $c=\hat{c}(x)$, the optimizer in $\mathcal{C}(x)$. By \cite{mostovyi}, Theorem~3.2, under the assumptions of Theorem \ref{thm:main-duality},
\begin{equation}\label{eq:max-by-duality}
\langle c,\hat\delta \rangle=xy=\langle c,yZ \rangle
\end{equation}
and $\hat\delta$ is the optimizer in $\mathcal{D}(y)$. In particular, $\hat\delta\preceq_\lambda yZ$ and, by  Proposition \ref{prop:lambda-ord-alternative}, $\hat\delta^2\preceq\lambda\delta^2$ for $\hat\delta^2=(\hat\delta-yZ)\vee 0$ and $\delta^2=(yZ-\hat\delta)\vee 0$. Let $\delta^1=\hat\delta\wedge yZ$. Tracing the following sequence of inequalities implies that \eqref{eq:max-by-duality} is only possible if conditions 1.-3. are satisfied:
\begin{equation}\label{eq:sequence}
\begin{aligned}
\langle c,\hat\delta\rangle&=\langle c,\delta^1 \rangle+\langle c,\hat\delta^2 \rangle\leq\langle c,\delta^1 \rangle+\langle \bar{c},\hat\delta^2 \rangle=\langle c,\delta^1 \rangle+\mathbb{E}\left[\int_0^{\hat{T}} {}^o\left(\int_.^{\hat{T}} \hat\delta^2 d\kappa\right)_t d\bar{c}_t\right]\\
&\leq \langle c,\delta^1 \rangle+\mathbb{E}\left[\int_0^{\hat{T}} {}^o\left(\int_.^{\hat{T}} \lambda\delta^2 d\kappa\right)_t d\bar{c}_t\right]=\langle c,\delta^1 \rangle+\langle\bar{c},\lambda\delta^2 \rangle\\
&\leq\langle c,\delta^1 \rangle+\langle c,\delta^2 \rangle=\langle c,yZ \rangle=xy,
\end{aligned}
\end{equation}
where for inequalities we used that $\lambda\bar{c}\leq c\leq\bar{c}$ and $\hat\delta^2\preceq\lambda \delta^2$.

``$\Leftarrow$''. If $\hat\delta$ and $yZ$ satisfy conditions 1.-3. then the above sequence of inequalities \eqref{eq:sequence} holds with equalities everywhere, i.e., \eqref{eq:max-by-duality} holds. Condition 3. implies that $\hat\delta\preceq_\lambda yZ$ and, by \eqref{eq:description-D-complete}, $\hat\delta\in\mathcal{D}(y)$. Using $y=u'(x)$, the conjugacy relations between $u$ (respectively, $U$) and $v$ (respectively, $V$), $c\in\mathcal{C}(x)$, and $\hat\delta\in\mathcal{D}(y)$, we obtain
\begin{align*}
u(x)-v(y)&=xy\overset{\eqref{eq:max-by-duality}}{=}\langle c,\hat\delta \rangle=\langle c_t,U'(t,c_t) \rangle\\
&=\mathbb{E}\left[\int_0^{\hat{T}} U(t,c_t) d\kappa_t\right]-\mathbb{E}\left[\int_0^{\hat{T}} V(t,\hat\delta_t) d\kappa_t\right]\leq u(x)-v(y).
\end{align*}
Hence, the last inequality is in fact an equality, which is only possible when $c$ and $\hat\delta$ are the optimizers in $\mathcal{C}(x)$ and $\mathcal{D}(y)$, respectively.

Finally, we show that the expression \eqref{eq:c-through-c-bar} of $c$ through its running essential supremum and the condition \eqref{eq:cond-on-c-bar} for the latter follow from 1.-3. Since $c\geq\lambda\bar{c}$, we have $\hat\delta=U'(c)\leq U'(\lambda\bar{c})$ and $\{yZ> U'(c)\}\supseteq\{yZ>U'(\lambda\bar{c})\}$. But 1. implies that $\{yZ> U'(c)\}\subseteq\{yZ>U'(\lambda\bar{c})\}$. Hence, $\{yZ> U'(c)\}=\{yZ>U'(\lambda\bar{c})\}$ and on this set $c=\lambda\bar{c}$, $\hat\delta=U'(\lambda\bar{c})$ (everything holds up to $\mathbb{P}\times d\kappa-$nullsets). Similarly, $c\leq\bar{c}$ therefore $U'(c)\geq U'(\bar{c})$ and $\{yZ< U'(c)\}\supseteq\{yZ<U'(\bar{c})\}$. But 2. implies that $\{yZ< U'(c)\}\subseteq\{yZ<U'(\bar{c})\}$, hence $\{yZ< U'(c)\}=\{yZ<U'(\bar{c})\}$ and on this set $c=\lambda\bar{c}$, $\hat\delta=U'(\bar{c})$. On the complement of these two sets, $\hat\delta=yZ$ therefore $c=I(yZ)$. We can summarize this as
\begin{equation*}
c_t=\left\{\begin{aligned}
\lambda\bar{c}_t&\quad\text{on}\quad\{yZ>U'(\lambda\bar{c})\}=\{\lambda\bar{c}>I(yZ)\}, \\
\bar{c}_t&\quad\text{on}\quad\{U'(\bar{c})>yZ\}=\{\bar{c}<I(yZ)\},\\
I(t,yZ_t)&\quad\text{on}\quad \{\bar{c}\geq I(yZ)\geq \lambda\bar{c}\},\\
\end{aligned}\right.
\end{equation*}
or, equivalently, as \eqref{eq:c-through-c-bar}. The condition \eqref{eq:cond-on-c-bar} follows from 3. since $(\hat\delta-yZ)\vee0=(U'(\bar{c})-yZ)\vee 0$ and $(yZ-\hat\delta)\vee 0=(yZ-U'(\lambda\bar{c}))\vee 0$.
\end{proof}

For the ratchet constraint, the characterization of the optimizers given in Proposition \ref{prop:complete-case} simplifies significantly.
\begin{cor}[$\lambda=1$]\label{cor:complete-env}
Suppose all the assumptions of Theorem \ref{thm:main-duality} hold and the market is complete. Let $x>0$, $y=u'(x)$, and $c\in\mathcal{C}_{\text{inc}}$ such that $\langle c,Z \rangle=x$. Then $c\in\mathcal{C}^1(x)$. The consumption plan $c$ is the optimizer in $\mathcal{C}^1(x)$ if and only if, $\mathbb{P}-$almost surely,
\begin{equation}\label{eq:env-ratchet}
^o\left(\int_.^{\hat{T}} U'(c)d\kappa\right)_t\leq{}^o\left(\int_.^{\hat{T}} yZ d\kappa\right)_t\quad\text{for all }t\in[0,{\hat{T}}),\ \text{with equality when }dc_t>0.
\end{equation}
\end{cor}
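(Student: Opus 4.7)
The plan is to deduce the corollary directly from Proposition \ref{prop:complete-case} applied with $\lambda=1$, exploiting the fact that a process $c\in\mathcal{C}_{\text{inc}}$ coincides with its own running essential supremum. Indeed, $c\leq\bar c$ up to a $\mathbb{P}\times d\kappa$-nullset by Proposition \ref{prop:increasing-dom}, while the membership $c\in\mathcal{C}_{\text{inc}}$ combined with Proposition \ref{prop:minimality-of-c-bar} (applied to $c$ with the dominating element $c$ itself) gives $\bar c\leq c$ pathwise $\mathbb{P}$-a.s.; since $c$ and $\bar c$ are both left-continuous, a.e.\ agreement upgrades to indistinguishability. Thus the constraint \eqref{cond:ddc} with $\lambda=1$ is automatic, Proposition \ref{prop:complete-case-D} together with $\langle c,Z\rangle=x$ yields $c\in\mathcal{C}^1(x)$, and one also gets $dc_t=d\bar c_t$ and $\hat\delta_t:=U'(t,c_t)=U'(t,\bar c_t)$ up to $\mathbb{P}\times d\kappa$-nullsets.

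With these identifications, I would transcribe the three conditions characterizing optimality in Proposition \ref{prop:complete-case} and check how each specializes. Because $\{c_t=\bar c_t\}$ is already a conull set, conditions 1 and 2 hold trivially for any candidate $\hat\delta$. The entire content thus lies in condition 3, which for $\lambda=1$ reads
\[
{}^o\left(\int_\cdot^{\hat T}(\hat\delta-yZ)\vee 0\,d\kappa\right)_t\leq {}^o\left(\int_\cdot^{\hat T}(yZ-\hat\delta)\vee 0\,d\kappa\right)_t,\qquad\text{with equality } d\bar c_t\text{-a.e.}
\]
Using the elementary identity $a\vee 0-(-a)\vee 0=a$ together with linearity of the optional projection, this is equivalent to ${}^o(\int_\cdot^{\hat T}\hat\delta\,d\kappa)_t\leq{}^o(\int_\cdot^{\hat T}yZ\,d\kappa)_t$ with equality $d\bar c_t$-a.e.; replacing $d\bar c_t$ by $dc_t$, substituting $\hat\delta=U'(c)$, and invoking Remark \ref{rem:pts-of-increase} to phrase the equality clause as ``when $dc_t>0$'' (the relevant difference of optional projections being right-continuous) yields exactly \eqref{eq:env-ratchet}.

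No step presents a genuine obstacle; the corollary is essentially a bookkeeping simplification of Proposition \ref{prop:complete-case} driven by the collapse $c=\bar c$. The only mild point meriting attention is the integrability required to legitimately split the two optional projections above: in the forward direction, $\hat\delta\in\mathcal{D}^1(y)$ so $\mathbb{E}[\int_0^{\hat T}\hat\delta\,d\kappa]<\infty$, and $\mathbb{E}[\int_0^{\hat T}yZ\,d\kappa]\leq y\alpha<\infty$ by Proposition \ref{prop:alpha-for-D}; in the reverse direction, evaluating the assumed \eqref{eq:env-ratchet} at $t=0^+$ gives $\mathbb{E}[\int_0^{\hat T}U'(c)\,d\kappa]\leq y\alpha<\infty$, restoring the same integrability. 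Both directions of the ``if and only if'' then transfer cleanly through the equivalence with condition 3.
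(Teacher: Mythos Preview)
Your proposal is correct and follows exactly the route the paper intends: the paper provides no separate proof for this corollary, treating it as an immediate specialization of Proposition~\ref{prop:complete-case} at $\lambda=1$, and your write-up supplies precisely the details left implicit there---the identification $c=\bar c$ for $c\in\mathcal{C}_{\text{inc}}$, the trivialization of conditions~1 and~2, the algebraic collapse of condition~3 into \eqref{eq:env-ratchet}, and the appeal to Remark~\ref{rem:pts-of-increase} for the equality clause. Your attention to the integrability needed to split the optional projection is a nice touch beyond what the paper spells out.
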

This characterization is closely related to the notion of the \textit{envelope process} introduced in Lemma A.1 in \cite{BK} and to the Representation Theorem of \cite{bank-el-karoui} it is based on. In Lemma \ref{lem:envelope} of the \hyperref[app:envelope]{Appendix}, we slightly modify Lemma~A.1 of \cite{BK} to show the following: if, in addition to Assumption~\ref{ass:utility} on utility $U$, we assume that $\mathbb{E}\left[\int_0^{\hat{T}}U'(\omega,t,x)d\kappa_t\right]<\infty$ for every $x>0$ then for every $y>0$ there exists a unique $c=c^y\in\mathcal{C}_\text{inc}$ for which \eqref{eq:env-ratchet} holds. By Corollary \ref{cor:complete-env}, this consumption plan has to be the optimizer, $c^y=\hat{c}(x)\in\mathcal{C}^1(x)$. In the \hyperref[app:envelope]{Appendix}, we also give a short proof, inspired by the arguments of \cite{riedel} and not involving duality, that, under an additional assumption on utility (Assumption \ref{ass:utility-additional}), the optimal consumption plans have this structure.

In the example below, we derive, as a special case of Corollary \ref{cor:complete-env}, the formula of \cite{riedel} for the optimal consumption plans.

\begin{exmp}
Let $\hat{T}=\infty$ and assume that stochastic clock is given by $\dot\kappa_s=e^{-rs}$, where $r>0$ is the interest rate (cf.~Remark \ref{rem:on-model}(ii)). We take the utility field given by $U(t,x)=\frac{e^{-\delta t} \mathtt{u}(x)}{e^{-rt}}$, where $\mathtt{u}$ is a strictly concave, increasing, continuously differentiable deterministic function on $(0,\infty)$ satisfying the Inada conditions $\mathtt{u}(0)=+\infty$, $\mathtt{u}(+\infty)=0$ and $\delta>0$ is the parameter of exponential time preferences of the agent. This utility satisfies Assumption \ref{ass:utility} and the expected utility functional in \eqref{eq:primal-problem} becomes
$\mathbb{E}\left[\int_0^\infty e^{-\delta t} \mathtt{u}(c_t)dt\right]$. Assume further that $\log(Z_t)$ is a L\'evy process (starting at zero) for the unique equivalent martingale deflator $Z$. With these choices of $\hat{T}$, $\kappa$, $U$, and $Z$, we turn out in the framework of \cite{riedel}. The case of the GBM market considered in \cite{dybvig} corresponds to $\log(Z_t)=-\theta B_t-\frac{1}{2}\theta^2 t$, where $B_t$ is the underlying Brownian motion and $\theta$ is the market price of risk.

Let $\mathtt{i}=(\mathtt{u}')^{-1}$. We will show that for a suitable constant $K>0$ the consumption plan
\begin{equation}\label{eq:riedel-sol}
c_t:=\begin{cases}0,& t=0,\\ \mathtt{i}\left(\inf_{s\in[0,t)}KZ_se^{(\delta-r)s}\right),& t>0,\end{cases}\quad\in\mathcal{C}_\text{inc}\end{equation}
satisfies \eqref{eq:env-ratchet}. By Lemma \ref{lem:envelope}, this is then the unique process in $\mathcal{C}_\text{inc}$ satisfying \eqref{eq:env-ratchet}, hence by Corollary \ref{cor:complete-env} it is the optimizer $\hat{c}(x)\in\mathcal{C}^1(x)$. The definition \eqref{eq:riedel-sol} corresponds to the formula (3) of \cite{riedel} for the optimal consumption in case $q=0$; in order to obtain the result analogous to Ridel's for the case $q>0$, we can simply apply Corollary \ref{cor:q-positive} below.

With the martingale property of $Z$ and with $\dot\kappa$ being deterministic and exponential, it is easy to check for the right-hand side of \eqref{eq:env-ratchet} that
${}^o\left(\int_.^\infty yZ d\kappa\right)_t=yZ_t\cdot d\kappa([t,\infty))=y\frac{e^{-rt}}{r}Z_t$.

With $c$ defined as in \eqref{eq:riedel-sol}, we have
\begin{equation}\label{eq:before-opt-proj}
\begin{aligned}
\int_t^\infty U'(c)d\kappa&=\int_t^\infty e^{-\delta s}\left(\inf_{u\in[0,s)} KZ_u e^{(\delta-r)u}\right) ds\leq \int_t^\infty e^{-\delta s}\left(\inf_{u\in[t,s)} KZ_u e^{(\delta-r)u}\right) ds\\
&=Ke^{-rt}Z_t\cdot\int_0^\infty e^{-\delta s} \left(\inf_{u\in[0,s)} \frac{Z_{t+u}}{Z_t} e^{(\delta-r)u}\right) ds=:Ke^{-rt}Z_t I_t,
\end{aligned}
\end{equation}
where ``$=$" holds in place of ``$\leq$" on $\{dc_t>0\}\in\mathcal{O}$. Due to the L\'evy assumption on $\log(Z_t)$, the integral $I_t$ satisfies $\mathbb{E}[I_t\vert\mathcal{F}_t]=\mathbb{E}\left[\int_0^\infty e^{-\delta s}\inf_{u\in[0,s)} Z_u e^{(\delta-r)u} ds\right]:=I\in(0,\infty)$, hence ${}^oI_t\equiv I$. Taking the optional projection in \eqref{eq:before-opt-proj}, we obtain
$$^o\left(\int_.^\infty U'(c)d\kappa\right)_t\leq Ke^{-rt}Z_tI\quad\text{for all }t\geq 0,\ \text{with equality when }dc_t>0.$$
This is precisely \eqref{eq:env-ratchet} if we take $K:=y/(Ir)$.
\end{exmp}


\subsection{Monotonicity and continuity of optimizers}

Next, we establish monotonicity and continuity of the optimizers with respect to the initial wealth. These results will be useful when dealing with the case $q>0$ in the following subsection.
\begin{prop}\label{prop:mon-and-cont}
Suppose all the assumptions of Theorem \ref{thm:main-duality} hold and the market is complete. For $x_2>x_1>0$,
$$\hat{c}(x_1)\leq \hat{c}(x_2),\quad \mathbb{P}\times d\kappa-\text{a.e.},$$
and for $x_n\to x$ with $x,x_n>0$, $\hat{c}(x_n)\to \hat{c}(x)$, $\mathbb{P}\times d\kappa-$almost everywhere.
\end{prop}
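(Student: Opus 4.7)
The plan is to derive monotonicity first and obtain continuity as a consequence, leveraging the first-order characterization from Proposition~\ref{prop:complete-case} together with the uniqueness part of Theorem~\ref{thm:main-duality}. Strict concavity of $U$ together with uniqueness of $\hat c(x)$ forces strict concavity of $u(\cdot,0)$ on $(0,\infty)$: if $\hat c(x_1)=\hat c(x_2)$ for $x_1<x_2$, then $\hat c(x_2)$ would leave budget unused at wealth $x_2$ and could not be optimal. Hence any selection $y(x)\in\partial u(x,0)\cap\mathcal{L}^*$ is strictly decreasing in $x$, which via the dual relations \eqref{eq:dual-relations-thm} is the mechanism driving the comparison.

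For monotonicity, fix $x_1<x_2$ and put $y_i:=y(x_i)$, $c_i:=\hat c(x_i)$, $\bar c_i:=\overline{c_i}$. In the ratchet case $\lambda=1$, Corollary~\ref{cor:complete-env} identifies $c_i\in\mathcal{C}_{\text{inc}}$ as the unique increasing process whose optional projection satisfies the envelope equation \eqref{eq:env-ratchet} with driver $y_i Z$. Since $y_1>y_2$ gives $y_1 Z>y_2 Z$ pointwise, the Bank--El Karoui envelope is monotonically comparable in its driver: arguing by contradiction, if $\{c_1>c_2\}$ has positive measure, one selects (via Remark~\ref{rem:pts-of-increase}) a stopping time $T$ which is a point of increase of $c_1$ and at which $c_1(T)>c_2(T)$, and the equality at $T$ in \eqref{eq:env-ratchet} for $c_1$ combined with the inequality at $T$ for $c_2$ contradicts $y_1 Z>y_2 Z$. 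For general $\lambda\in(0,1]$, the representation \eqref{eq:c-through-c-bar} reads $c_i=\lambda\bar c_i\vee I(\cdot,y_i Z)\wedge\bar c_i$, where $(a,b,c)\mapsto(a\vee b)\wedge c$ is componentwise non-decreasing. Since $I(\cdot,y_1 Z)\le I(\cdot,y_2 Z)$ pointwise, it suffices to establish $\bar c_1\le\bar c_2$ a.e., which follows from the same stopping-time argument applied to the envelope-type equation \eqref{eq:cond-on-c-bar} in place of \eqref{eq:env-ratchet}.

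For continuity, given $x_n\to x$ it suffices by passing to subsequences to treat the monotone cases $x_n\uparrow x$ and $x_n\downarrow x$ separately. When $x_n\uparrow x$, monotonicity gives $c_n:=\hat c(x_n)\le\hat c(x)$ non-decreasing in $n$, so $c_n\uparrow c^*\le\hat c(x)$ pointwise. Closedness of $\mathcal{C}^\lambda(x)$ under convergence in measure (Proposition~\ref{prop:C-bipolar}) places $c^*$ in $\mathcal{C}^\lambda(x)$. Monotone convergence applied separately to $U^+(c_n)$ (finite by $u(x,0)<\infty$) and to $U^-(c_n)$, together with continuity of the finite concave function $u$ on the open set $\mathcal{K}$, yield $\mathbb{E}[\int_0^{\hat T}U(c^*)\,d\kappa]=\lim_n u(x_n,0)=u(x,0)$, so $c^*$ is optimal and uniqueness from Theorem~\ref{thm:main-duality} forces $c^*=\hat c(x)$. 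The case $x_n\downarrow x$ is symmetric, using Fatou's lemma in the admissibility characterization $\sup_{Z\in\mathcal{Z}}\langle c,Z\rangle\le x$ to keep $c^*\in\mathcal{C}^\lambda(x)$.

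The main obstacle is the monotonicity step for $\lambda\in(0,1)$: equation \eqref{eq:cond-on-c-bar} contains both $U'(\bar c)$ and $U'(\lambda\bar c)$, so the stopping-time argument must track how the positive parts $(U'(\bar c)-yZ)\vee0$ and $\lambda(yZ-U'(\lambda\bar c))\vee0$ respond to varying $\bar c$ and $y$ simultaneously, while preserving the $d\bar c_t$-a.e. equality; the stopping time $T$ must be chosen so that the ``active regime'' of the sandwich \eqref{eq:c-through-c-bar} at $T$ is identified correctly for both indices, allowing the ratchet-case contradiction to go through.
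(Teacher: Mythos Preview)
Your monotonicity argument has a genuine gap already in the ratchet case $\lambda=1$, not only for $\lambda\in(0,1)$. You select a single stopping time $T$ that is a point of increase of $c_1$ with $c_1(T)>c_2(T)$, and hope that the equality in \eqref{eq:env-ratchet} for $c_1$ at $T$ together with the inequality for $c_2$ at $T$ contradicts $y_1>y_2$. But the envelope relations at $T$ compare the optional projections of the \emph{tail integrals} $\int_T^{\hat T}U'(c_i)\,d\kappa$, and the pointwise relation $c_1(T)>c_2(T)$ says nothing about the ordering of $c_1$ versus $c_2$ on $(T,\hat T)$. Without knowing $c_1\ge c_2$ on that interval you cannot compare $U'(c_1)$ and $U'(c_2)$ there, so no contradiction results. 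The same issue makes your extension to \eqref{eq:cond-on-c-bar} for general $\lambda$ collapse before the difficulty you flag in the last paragraph even arises.

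The paper repairs this by a \emph{two}-stopping-time level-crossing argument. For each level $l>0$ it takes $T^i_l:=\inf\{t:\bar c^i_t>l\}$ and, on $\{T^1_l<T^2_l\}$, sets $S^1_l:=T^1_l$, $S^2_l:=T^2_l$. The point is that on the whole interval $(S^1_l,S^2_l]$ one has $\bar c^1>l\ge\bar c^2$, so the sign of $f(y_1,\bar c^1)-f(y_2,\bar c^2)$ (with $f(y,\bar c):=(U'(\bar c)-yZ)\vee0-\lambda(yZ-U'(\lambda\bar c))\vee0$) is controlled by the monotonicity of $f$ in both arguments, strictly on $R^c$. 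The envelope equality at $S^1_l$ for $i=1$ and at $S^2_l$ for $i=2$, together with the inequalities at the other endpoint, give
\[
\mathbb{E}\Bigl[\int_{S^1_l}^{S^2_l}f(y_1,\bar c^1)\,d\kappa\Bigr]\ \ge\ 0\ \ge\ \mathbb{E}\Bigl[\int_{S^1_l}^{S^2_l}f(y_2,\bar c^2)\,d\kappa\Bigr],
\]
which forces the interval to be $\mathbb{P}\times d\kappa$-null on $R^c$. This is exactly the missing localization in your sketch: you must look at an interval on which the level ordering of $\bar c^1$ and $\bar c^2$ is known, not at a single time.

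Your continuity argument via utility convergence and uniqueness can be made to work, but the paper's route is shorter and avoids integrability bookkeeping: once monotonicity is in hand, for $x_n\uparrow x$ one has $\lim_n\hat c(x_n)\le\hat c(x)$ and, using completeness and $\langle\hat c(x_n),Z\rangle=x_n$, monotone convergence gives $\langle\lim_n\hat c(x_n),Z\rangle=x=\langle\hat c(x),Z\rangle$, hence equality since $Z>0$. The decreasing case is symmetric.
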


\begin{proof}
For fixed $x_2>x_1>0$ we denote for brevity $y_i=u'(x_i)$, $c^i=\hat{c}(x_i)$ and $\bar{c}^i$ its running essential supremum, $i=1,2$. By \eqref{eq:c-through-c-bar}, $c^i=\lambda\bar{c}^i\vee I(y_iZ)\wedge \bar{c}^i$ for $i=1,2$. We split the product space $\Omega\times[0,\hat{T})$ into two regions:
$$R:=\left\{(\omega,t):\ \bar{c}_t^1\geq I(t,y_1Z_t)\geq \lambda\bar{c}^1_t\text{ and } \bar{c}^2_t\geq I(t,y_2Z_t)\geq \lambda\bar{c}^2_t\right\}\in\mathcal{O}$$
and its complement. On $R$, $c^1_t=I(t,y_1Z_t)<I(t,y_2Z_t)=c^2_t$ since $y_1>y_2$ and $I$ is strictly decreasing. In order to prove $c^1\leq c^2$, it remains to show that $\bar{c}^1\leq\bar{c}^2$ on $R^c$, $\mathbb{P}\times d\kappa-$a.e.

Let $f$ be a (time-dependent and stochastic) function defined by
$$f(y,\bar{c}):=f(\omega, t,y,\bar{c}):=(U'(t,\bar{c})-yZ_t)\vee0 -\lambda(yZ_t-U'(t,\lambda\bar{c}))\vee 0\quad\text{for}\quad y,\bar{c}>0.$$
and let
$$D^i_t:=\int_t^{\hat{T}} f\left(s,y_i,\bar{c}^i_s\right) d\kappa_s=\int_t^{\hat{T}} \left(\hat\delta(y_i)-y_iZ\right)\vee0 -\lambda\left(y_iZ-\hat\delta(y_i)\right)\vee 0d\kappa,\quad i=1,2,$$
as in Remark \ref{rem:pts-of-increase} (the second equality holds up to indistinguishability). By Remark \ref{rem:pts-of-increase},
\begin{equation}\label{eq:eq-at-pts-increase}
\mathbb{P}-\text{almost surely}:\quad ^oD^i_t\leq 0\quad\text{for all }t\in [0,\hat{T})\text{ with }``="\text{ if } d\bar{c}^i_t>0.
\end{equation}
For a fixed $l\geq 0$, we define two stopping times
${T}^i_l:=\inf\{t\in[0,\hat{T}): \bar{c}^i_t> l \}$, $i=1,2$, where the infimum of an empty set in taken to be $\hat{T}$. As in the proof of Lemma \ref{lem:pt-of-increase}, ${T}^i_l$ is either $\hat{T}$, or a point of increase of $\bar{c}^i$. Next, let
\begin{equation*}
S^1_l:=\begin{cases}
{T}^1_l,&\quad\text{if }{T}^1_l<{T}^2_l,\\
\hat{T},&\quad\text{otherwise},
\end{cases}\quad\text{and}\quad S^2_l:=
\begin{cases}
{T}^2_l,&\quad\text{if }{T}^1_l<{T}^2_l,\\
\hat{T},&\quad\text{otherwise}.
\end{cases}
\end{equation*}
Thus, $S^1_l={T}^1_l<{T}^2_l=S^2_l$ on $\{\omega: {T}^1_l(\omega)<{T}^2_l(\omega)\}$ and $S^1_l=S^2_l=\hat{T}$ otherwise. Property \eqref{eq:eq-at-pts-increase} implies
\begin{equation*}
\mathbb{E} D^1_{S_l^1}=0,\quad \mathbb{E} D^1_{S_l^2}\leq 0,\quad \mathbb{E}D^2_{S_l^1}\leq 0,\quad\text{and}\quad \mathbb{E} D^2_{S_l^2}=0.
\end{equation*}
Taking into account $S^1_l\leq S^2_l$ and the definition of $D^i$, $i=1,2$, we obtain:
\begin{equation}\label{eq:ineq-at-stopping}
\mathbb{E}\left[\int_{S^1_l}^{S^2_l}f\left(y_1,\bar{c}^1\right) d\kappa\right]\geq 0\geq \mathbb{E}\left[\int_{S^1_l}^{S^2_l}f\left(y_2,\bar{c}^2\right) d\kappa\right].
\end{equation}
On the other hand, $\bar{c}^1>l\geq \bar{c}^2$ on $(S^1_l,S^2_l]$ and $y_1>y_2$, so by the monotonicity of $f$,
\begin{equation}\label{eq:mon-of-f}
f\left(\omega,t,y_1,\bar{c}^1_t(\omega)\right)\leq f\left(\omega,t,y_2,\bar{c}^2_t(\omega)\right)\quad \text{on }(S^1_l(\omega),S^2_l(\omega)].
\end{equation}
Moreover, the inequality in \eqref{eq:mon-of-f} is strict on $R^c$ due to the strict monotonicity of $f$ on $R^c$ for at least one of $\bar{c}^1$ or $\bar{c}^2$: either $\bar{c}_t^1\in \left[I(t,y_1Z_t),\frac{1}{\lambda}I(t,y_1Z_t)\right]^c$ and then $$f\left(\omega,t,y_1,\bar{c}^1_t(\omega)\right)<f\left(\omega,t,y_1,\bar{c}^2_t(\omega)\right)\leq f\left(\omega,t,y_2,\bar{c}^2_t(\omega)\right),$$ or $\bar{c}^2_t\in \left[I(t,y_2Z_t),\frac{1}{\lambda}I(t,y_2Z_t)\right]^c$ and then $$f\left(\omega,t,y_2,\bar{c}^2_t(\omega)\right)> f\left(\omega,t,y_2,\bar{c}^1_t(\omega)\right)\geq f\left(\omega,t,y_1,\bar{c}^1_t(\omega)\right).$$
Therefore, \eqref{eq:ineq-at-stopping} implies that the set
 $N_l:=\{(\omega,t): S^1_l(\omega)<t\leq S^2_l(\omega)\}\cap R^c$
is a $\mathbb{P}\times d\kappa-$nullset.

If $\{(\omega,t):\bar{c}_t^1>\bar{c}_t^2\}\cap R^c$ has a positive $\mathbb{P}\times d\kappa$ measure then there exists an $l\in\mathbb{Q}$ such that the set
$$\{(\omega,t):\bar{c}_t^1> l\geq \bar{c}_t^2\}\cap R^c=\{(\omega,t):S^1_l(\omega)<t\leq S^2_l(\omega)\}\cap R^c= N_l$$
has a positive $\mathbb{P}\times d\kappa$ measure, a contradiction. Therefore, $\bar{c}_t^1\leq\bar{c}_t^2$, $\mathbb{P}\times d\kappa-$a.e. on $R^c$, completing the proof of monotonicity.

To prove continuity, we take an increasing sequence $x_n\uparrow x$ (for a decreasing sequence the argument is analogous). By $\lim_{n\to\infty}\hat{c}(x_n)\leq \hat{c}(x)$, $\mathbb{P}\times d\kappa-$a.e., and by monotone convergence
$$x=\lim_{n\to\infty} x_n=\lim_{n\to\infty}\langle\hat{c}(x_n),Z\rangle = \langle\lim_{n\to\infty}\hat{c}(x_n),Z\rangle\leq \langle\hat{c}(x),Z\rangle=x,$$
so the equality should hold in place of inequality, i.e., $\hat{c}(x)=\lim_{n\to\infty}\hat{c}(x_n)$, $\mathbb{P}\times d\kappa-$a.e.
\end{proof}


\subsection{Case $q>0$}
Now we consider the presence of a lower bound $q>0$ on initial consumption. It turns out that optimizers for $q>0$ can be described in terms of appropriate optimizers for $q=0$, as the following proposition states. Note that in the case of a complete market the constant $\alpha$ defined in Proposition \ref{prop:alpha-for-D} becomes simply $\alpha=\mathbb{E}\left[\int_0^{\hat{T}} Zd\kappa\right]$.

\begin{prop}
Suppose all the assumptions of Theorem \ref{thm:main-duality} hold and the market is complete. Fix $q>0$. For $x>0$, $y=u'(x)$, let $\bar{c}^{x}$ be the running essential supremum of the optimizer $\hat{c}(x)\in\mathcal{C}(x)$ and define
\begin{equation}\label{eq:lower-bound}
c_t:=c_t(x,q)=\lambda [\bar{c}_t^{x}\vee q]\vee I(t,yZ_t)\wedge [\bar{c}_t^{x}\vee q]=\hat{c}(x)\vee[\lambda q\vee I(t,yZ_t)\wedge q]
\end{equation}
for $t\in[0,\hat{T})$ and $\pi(x):=\langle c,Z\rangle$, the price of the consumption plan $c$. Then $\pi(x)\in[(\alpha\lambda)q,\infty)$, and if $\pi(x)>(\alpha\lambda)q$ then $c\in\mathcal{C}(\pi(x),q)$ and $c$ is the optimizer in $\mathcal{C}(\pi(x),q)$.

The function $x\mapsto \pi(x)$ is continuous non-decreasing on $(0,\infty)$ with $\pi(x)\downarrow (\alpha\lambda)q$ when $x\downarrow 0$ and $\pi(x)\uparrow \infty$ when $x\uparrow \infty$. As a consequence, for every $x'>(\alpha\lambda)q$ there exists an $x>0$ such that $\pi(x)=x'$ and hence the optimizer $\hat{c}(x',q)\in\mathcal{C}(x',q)$ is given by \eqref{eq:lower-bound}.
\end{prop}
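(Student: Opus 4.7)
I begin by verifying that $c\in\mathcal{C}(\pi(x),q)$ and $\pi(x)\in[\alpha\lambda q,\infty)$. Split $\Omega\times[0,\hat T)$ into $A:=\{\bar c^{x}<q\}$ and $B:=\{\bar c^{x}\geq q\}$. A short case analysis on the two equivalent forms in \eqref{eq:lower-bound} yields $c=\hat c(x)$ on $B$ (since by \eqref{eq:c-through-c-bar} we have $\hat c(x)\geq\lambda\bar c^{x}\geq\lambda q$, while $\hat c(x)\geq I(yZ)\wedge\bar c^{x}\geq I(yZ)\wedge q$) and $c=\lambda q\vee I(yZ)\wedge q\in[\lambda q,q]$ on $A$. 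Propositions \ref{prop:mon-of-sup} and \ref{prop:minimality-of-c-bar} give $\bar c=\bar c^{x}$ on $B$ and $\bar c\leq\bar c^{x}\vee q=q$ on $A$, whence \eqref{cond:ddc-with-q} holds: $\lambda(\bar c\vee q)=\lambda\bar c^{x}\leq\hat c(x)=c$ on $B$ and $\lambda(\bar c\vee q)=\lambda q\leq c$ on $A$. Completeness (Proposition \ref{prop:complete-case-D}) together with $\langle c,Z\rangle=\pi(x)$ gives $\pi(x)$-admissibility. The bounds $\alpha\lambda q\leq\pi(x)<\infty$ are immediate from $c\geq\lambda q$ and $c\leq\hat c(x)+q$ via $\langle\hat c(x),Z\rangle\leq x$.

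For the optimality claim under $\pi(x)>\alpha\lambda q$, I apply Theorem \ref{thm:main-duality}. Set $\tilde\delta:=U'(c)$ and $y:=u'(x)$. Since $c\geq\hat c(x)$ pointwise, $\tilde\delta\leq\hat\delta(y)\in\mathcal{D}(y)$, and the trivial decomposition $\delta^1=\tilde\delta^1=\tilde\delta$, $\delta^2=\hat\delta(y)-\tilde\delta$, $\tilde\delta^2=0$ of Definition \ref{def:lambda-ordering} shows $\tilde\delta\preceq_\lambda\hat\delta(y)$. Proposition \ref{prop:sufficient-for-Dyr2} then produces $r=-\lambda\,\mathbb{E}\left[\int_0^{\hat T}(\hat\delta(y)-\tilde\delta)\,d\kappa\right]\in(-\alpha\lambda y,0]$ with $(y,r)\in\mathcal{L}^*$ and $\tilde\delta\in\mathcal{D}(y,r)$. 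By Theorem \ref{thm:main-duality} and uniqueness of optimizers, it suffices to verify the conjugacy identity $\langle c,\tilde\delta\rangle=\pi(x)y+qr$; this identifies $c=\hat c(\pi(x),q)$. This equality is the main obstacle. To establish it, I would trace equality throughout the chain of inequalities in the proof of Proposition \ref{prop:sufficient-for-Dyr2} specialized to our $c$, exploiting these structural facts: $\hat\delta(y)-\tilde\delta$ is supported in $A$ (as $c=\hat c(x)$ on $B$); $\bar c\vee q=q$ on $A$; the $q=0$ duality identity $\langle\hat c(x),\hat\delta(y)\rangle=\langle\hat c(x),yZ\rangle=xy$; and a sub-region analysis on $A$ according to the position of $I(yZ)$ relative to $\lambda q,q$ and of $\bar c^{x}$ relative to $I(yZ)$, combined with the optimality structure \eqref{eq:c-through-c-bar} of $\hat c(x)$ on $A$.

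For monotonicity and continuity of $\pi$: if $x_1<x_2$, Proposition \ref{prop:mon-and-cont} gives $\hat c(x_1)\leq\hat c(x_2)$, hence $\bar c^{x_1}\leq\bar c^{x_2}$ by Proposition \ref{prop:mon-of-sup}; and $y(x)=u'(x)$ is non-increasing with $I$ decreasing, so $I(y(x_1)Z)\leq I(y(x_2)Z)$. Since the first form of \eqref{eq:lower-bound} is non-decreasing in both $\bar c^{x}\vee q$ and $I(yZ)$, $c(x_1)\leq c(x_2)$ pointwise, whence $\pi(x_1)\leq\pi(x_2)$ by integrating against $Z$. Continuity of $\pi$ follows from Proposition \ref{prop:mon-and-cont}, continuity of $y(\cdot)$, and dominated convergence, using the majorant $\hat c(x)+q$ integrable against $Z$ since $\langle\hat c(x),Z\rangle\leq x$ and $\langle 1,Z\rangle=\alpha<\infty$. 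As $x\downarrow 0$: $\hat c(x)\to 0$ (from $\langle\hat c(x),Z\rangle\leq x\to 0$ and $Z>0$) and $y(x)\uparrow\infty$ by the Inada condition on $u$, so $I(yZ)\downarrow 0$ and $c\to\lambda q$, giving $\pi(x)\downarrow\alpha\lambda q$. As $x\uparrow\infty$: $\pi(x)\geq\langle\hat c(x),Z\rangle=x\uparrow\infty$.

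Finally, by continuity of $\pi$ and the two limits above, the intermediate value theorem implies $\pi((0,\infty))\supseteq(\alpha\lambda q,\infty)$: for every $x'>\alpha\lambda q$ there exists $x>0$ with $\pi(x)=x'$, and the formula \eqref{eq:lower-bound} at this $x$ then produces the optimizer $\hat c(x',q)$ in $\mathcal{C}(x',q)$.
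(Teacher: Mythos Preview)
Your argument for $c\in\mathcal{C}(\pi(x),q)$, for the bounds on $\pi(x)$, and for the monotonicity/continuity/limits of $\pi$ is fine and close to the paper's. The gap is exactly where you flag it: the conjugacy identity $\langle c,\tilde\delta\rangle=\pi(x)y+qr$ is not proved, and your plan for it does not go through with your choice of $\delta=\hat\delta(y)$.

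Concretely, the chain of inequalities in Proposition~\ref{prop:sufficient-for-Dyr2} becomes an equality only if $c=\lambda(\bar c\vee q)$ on the support of $(\delta-\tilde\delta)\vee 0$. With $\delta=\hat\delta(y)$ this support is $\{c>\hat c(x)\}\subseteq A$, and on $A$ one has $\lambda(\bar c\vee q)=\lambda q$. But on the subset of $A$ where $I(yZ)>q>\bar c^{x}$ (which has positive measure in general) one gets $c=q>\lambda q$ while $\hat c(x)=\bar c^{x}<q=c$, so equality fails. Hence your $r=-\lambda\langle 1,\hat\delta(y)-\tilde\delta\rangle$ need not satisfy $\langle c,\tilde\delta\rangle=\pi(x)y+qr$, and ``tracing equality'' through that proof cannot close the argument.

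The paper avoids this by applying Proposition~\ref{prop:sufficient-for-Dyr2} with $\delta=yZ$ rather than $\hat\delta(y)$, obtaining
\[
r=\mathbb{E}\Bigl[\int_0^{\hat T}\bigl[(U'(\bar c^{x}\vee q)-yZ)\vee 0-\lambda(yZ-U'(\lambda[\bar c^{x}\vee q]))\vee 0\bigr]\,d\kappa\Bigr].
\]
The identity $\langle c,\tilde\delta-yZ\rangle=qr$ is then verified directly, not via the inequalities of Proposition~\ref{prop:sufficient-for-Dyr2}: after writing $\bar c^{x}\vee q=q+c'$ with $c'=(\bar c^{x}\vee q)-q\in\mathcal{C}_{\mathrm{inc}}$, an integration by parts reduces the difference $\langle c,\tilde\delta-yZ\rangle-qr$ to $\mathbb{E}\bigl[\int_0^{\hat T}{}^o(\int_\cdot^{\hat T}[(U'(\bar c^{x})-yZ)\vee 0-\lambda(yZ-U'(\lambda\bar c^{x}))\vee 0]\,d\kappa)_t\,dc'_t\bigr]$, which vanishes precisely by the ``equality at points of increase'' clause of condition~\eqref{eq:cond-on-c-bar} for $\bar c^{x}$, since $dc'$ is absolutely continuous with respect to $d\bar c^{x}$. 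This use of \eqref{eq:cond-on-c-bar} is the missing ingredient in your sketch; it is formulated in terms of $yZ$, which is why $yZ$ and not $\hat\delta(y)$ is the right comparison process.
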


\begin{rem}[Interpretation]
In view of Proposition \ref{prop:complete-case}, $\hat{c}(x)=\lambda\bar{c}^x\vee I(yZ)\wedge \bar{c}^x$, therefore, the optimizer in $\mathcal{C}(x',q)$, $(x',q)\in\mathcal{K}$, can be described as follows: for some $x>0$, the optimizers $\hat{c}(x)$ and $\hat{c}(x',q)$ behave identically on $\{\bar{c}^x\geq q\}$, i.e., starting from the time when the running essential supremum of $\hat{c}(x)$ becomes at least $q$. On $\{\bar{c}^x< q\}$, $\hat{c}(x',q)$ behaves as an unconstrained agent's consumption $I(t,yZ_t)$ restricted to stay between $\lambda q$ and $q$: $\hat{c}(x',q)=\lambda q\vee I(t,yZ_t)\wedge q$. This agrees with the observation that the drawdown constraint \eqref{cond:ddc-with-q} with $q>0$ becomes the simple drawdown constraint \eqref{cond:ddc} on the set $\{\bar{c}\geq q\}$ and becomes simply a lower bound $c\geq \lambda q$ on $\{\bar{c}< q\}$.

As in the case $q=0$, we have three possible types of behaviour: either the agent consumes at the minimal level allowed by the drawdown constraint \eqref{cond:ddc-with-q}, $c_t=\lambda[\bar{c}_t\vee q]=\lambda [\bar{c}_t^{x}\vee q]$, or the agent consumes at the current running essential supremum level $c_t=\bar{c}_t=\bar{c}_t^{x}\vee q$, or the agent consumes as an unconstrained agent, $c_t=I(t,yZ_t)$. This separation into three regions agrees with the result of \cite[Theorem 1]{Arun2012} on the drawdown constraint in a GBM market with CRRA utility, but it does not seem obvious from Arun's result that the optimal consumption can be linked to the optimal consumption of an unconstrained agent.

We note that $\pi(x)=(\alpha\lambda)q$ is only possible if $c$ given by \eqref{eq:lower-bound} satisfies $c\equiv\lambda q$, $\mathbb{P}\times d\kappa-$a.e. But this is the only admissible consumption plan satisfying \eqref{cond:ddc-with-q} with initial wealth $(\alpha\lambda)q$ and therefore $c$ given by \eqref{eq:lower-bound} is the optimizer in this trivial case as well. 
\end{rem}

\begin{proof} First, we prove the announced properties of the function $x\mapsto\pi(x)$. Since $x+\alpha q\geq\langle\hat{c}(x)+ q,Z\rangle\geq \pi(x)\geq\langle \lambda q,Z\rangle=(\alpha\lambda)q$, we have
$\pi(x)\in[(\alpha\lambda)q,\infty)$. Since $c\geq \hat{c}(x)$, we also obtain $\pi(x)=\langle c, Z\rangle\geq \langle \hat{c}(x),Z\rangle =x$, i.e., $\pi(x)\uparrow\infty$ as $x\uparrow \infty$. The map $x\mapsto\hat{c}(x)$ is continuous and monotone in the sense of Proposition \ref{prop:mon-and-cont}, moreover, by this monotonicity $\hat{c}(x)\downarrow 0$ as $x\downarrow 0$, $\mathbb{P}\times d\kappa-$a.e., since $\langle \hat{c}(x),Z\rangle=x\downarrow 0$ and $Z>0$, $\mathbb{P}\times d\kappa-$a.e. The function $x\mapsto y=u'(x)$ is strictly decreasing with $u'(0)=\infty$ by \cite[Theorem 3.2]{mostovyi}, while the (random, time-dependent) function $I$ is strictly decreasing with $I(\infty)=0$. Therefore, for every $(\omega,t)$ the map $x\mapsto I(t,yZ_t)$ is continuous and strictly increasing with $I(t,yZ_t)\downarrow 0$ as $x\downarrow 0$. These observations and \eqref{eq:lower-bound} imply continuity and monotonicity of $x\mapsto c(x,q)$ in the sense of Propostion \ref{prop:mon-and-cont}. By monotone convergence $x\mapsto \pi(x)=\langle c(x,q),Z\rangle$ is continuous and non-decreasing on $(0,\infty)$, and $\pi(x)\downarrow \langle 0\vee[\lambda q\vee 0\wedge q],Z\rangle=(\alpha\lambda)q$ as $x\downarrow 0$.

Now, let $x':=\pi(x)>(\alpha\lambda)q$ for some $x>0$. The consumption plan $c$ defined by \eqref{eq:lower-bound} satisfies \eqref{cond:ddc} because it is bounded between the non-decreasing processes $\bar{c}_t^{x}\vee q$ and $\lambda[\bar{c}_t^{x}\vee q]$. Hence, $c$ is $x'$-admissible, satisfies \eqref{cond:ddc} and $c\geq\lambda q$, meaning $c\in\mathcal{C}(x',q)$. It remains to show that $c$ is the optimizer in $\mathcal{C}(x',q)$.

Define $\hat\delta_t=\hat\delta_t(y)=U'(t,\hat{c}_t(x))$ and
\begin{equation}\label{eq:def-of-delta}
\tilde\delta_t=U'(t,c_t)=U'(\lambda [\bar{c}_t^{x}\vee q])\wedge yZ_t\vee U'(\bar{c}_t^{x}\vee q).
\end{equation}
Since $c\geq\hat{c}(x)$, we have $\tilde\delta\leq \hat{\delta}\in\mathcal{D}(y)$ and therefore $\tilde\delta\preceq_\lambda yZ$ by \eqref{prop:complete-case-D}. Furthermore, by Proposition \ref{prop:sufficient-for-Dyr2}, $\tilde\delta\in\mathcal{D}(y,r)$ with $r$ given by
\begin{equation*}
\begin{aligned}
r:&=\mathbb{E}\left[\int_0^{\hat{T}} (\tilde\delta-yZ)\vee 0-\lambda(yZ-\tilde\delta)\vee0 d\kappa\right]\\
&=\mathbb{E}\left[\int_0^{\hat{T}} (U'(\bar{c}_t^{x}\vee q)-yZ)\vee 0-\lambda(yZ-U'(\lambda [\bar{c}_t^{x}\vee q]))\vee0 d\kappa\right].
\end{aligned}
\end{equation*}
We will finish the proof by showing that $\langle c,\tilde\delta\rangle=x'y+qr$. This equality, $c\in\mathcal{C}(x',q)$, $\tilde\delta\in\mathcal{D}(y,r)$, and conjugacy relations between $U$ and $V$ then yield
$$u(x',q)\geq\mathbb{E}\left[\int_0^{\hat{T}} U(t,c_t)d\kappa_t\right]\overset{\eqref{eq:def-of-delta}}{=}\mathbb{E}\left[\int_0^{\hat{T}} V(t,\tilde\delta_t)d\kappa_t\right]+x'y+qr\geq v(y,r)+x'y+qr,$$
which by \eqref{eq:conjugacy-rel} implies $c=\hat{c}(x',q)$ and $\tilde\delta=\hat\delta(y,r)$.

Let $c':=\bar{c}^{x}\vee q-q\in\mathcal{C}_{\text{inc}}$. The equality $\langle c,\tilde\delta\rangle=x'y+qr$ is equivalent to $\langle c, \tilde\delta - yZ\rangle =qr$, which can be checked with the following sequence of equalities:
\begin{equation*}
\begin{aligned}
\langle c, \tilde\delta - yZ\rangle \overset{\eqref{eq:def-of-delta}}{=}& \langle c\cdot\mathbbm{1}_{\{I(yZ)>\bar{c}^{x}\vee q\}},(U'(\bar{c}^{x}\vee q)-yZ)\vee 0\rangle\\
&\quad\quad-\langle c\cdot\mathbbm{1}_{\{I(yZ)<\lambda[\bar{c}^{x}\vee q]\}},(yZ-U'(\lambda [\bar{c}^{x}\vee q]))\vee0\rangle\\
\overset{\eqref{eq:lower-bound}}{=}&\langle \bar{c}^{x}\vee q,(U'(\bar{c}^{x}\vee q)-yZ)\vee 0\rangle-\langle \lambda[\bar{c}^{x}\vee q],(yZ-U'(\lambda [\bar{c}^{x}\vee q]))\vee0\rangle\\
=&\ qr+\langle c', (U'(\bar{c}^{x}\vee q)-yZ)\vee 0-\lambda (yZ-U'(\lambda [\bar{c}^{x}\vee q]))\vee0\rangle\\
=&\ qr+\langle c', (U'(\bar{c}^{x})-yZ)\vee 0-\lambda (yZ-U'(\lambda\bar{c}^{x}))\vee0\rangle\\
=&\ qr + \mathbb{E}\left[\int_0^{\hat{T}}{}^o\left(\int_.^{\hat{T}} (U'(\bar{c}^x)-yZ)\vee0 - \lambda(yZ-U'(\lambda\bar{c}^x))\vee 0 d\kappa\right)_tdc'_t\right]=qr,
\end{aligned}
\end{equation*}
where the expectation on the last line vanishes by \eqref{eq:cond-on-c-bar} due to the fact that the measure $dc'$ is absolutely continuous with respect to $d\bar{c}^x$.
\end{proof}

\begin{cor}[$\lambda=1$]\label{cor:q-positive}
Suppose all the assumptions of Theorem \ref{thm:main-duality} hold and the market is complete. Fix $q>0$. For $x>0$, define $c:=\hat{c}(x)\vee q$ and $\pi(x):=\langle c,Z\rangle$, the price of the consumption plan $c$. Then $\pi(x)\in[\alpha q,\infty)$, and if $\pi(x)>\alpha q$ then $c\in\mathcal{C}^1(\pi(x),q)$ and $c$ is the optimizer in $\mathcal{C}^1(\pi(x),q)$. Moreover, for every $x'>\alpha q$ there exists an $x>0$ such that $\pi(x)=x'$ and hence the optimizer $\hat{c}(x',q)\in\mathcal{C}^1(x',q)$ is given by $\hat{c}(x)\vee q$.
\end{cor}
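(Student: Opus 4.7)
The plan is to deduce this directly from the preceding Proposition by specializing to $\lambda = 1$, so the main task is to verify that the three-regime formula \eqref{eq:lower-bound} collapses to the simple envelope $\hat{c}(x) \vee q$ in the ratchet case. For $\lambda = 1$ one computes pointwise
$$\lambda[\bar{c}_t^{x} \vee q] \vee I(t,yZ_t) \wedge [\bar{c}_t^{x} \vee q] \;=\; [\bar{c}_t^{x} \vee q] \vee I(t,yZ_t) \wedge [\bar{c}_t^{x} \vee q] \;=\; \bar{c}_t^{x} \vee q,$$
since $[\bar{c}_t^{x} \vee q] \vee I(t,yZ_t) \geq \bar{c}_t^{x} \vee q$, and capping by $\bar{c}_t^{x} \vee q$ returns that value.

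Next, as observed at the end of Section \ref{subsec:domains}, for $\lambda = 1$ one may take, without loss of generality, the optimizer of any monotone functional over $\mathcal{C}^1(x)$ to satisfy $\hat{c}(x) = \bar{c}^{x}$, $\mathbb{P} \times d\kappa$-a.e. Hence $\bar{c}^{x} \vee q = \hat{c}(x) \vee q$, matching the formula $c := \hat{c}(x) \vee q$ in the statement and confirming that the consumption plan produced by the preceding Proposition with $\lambda = 1$ is exactly the $c$ defined here.

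With this identification in place, every remaining assertion — $\pi(x) \in [\alpha q, \infty)$ (using $\alpha\lambda = \alpha$), the fact that $c \in \mathcal{C}^1(\pi(x), q)$ is the optimizer in $\mathcal{C}^1(\pi(x), q)$ whenever $\pi(x) > \alpha q$, the continuity and monotonicity of $x \mapsto \pi(x)$ with limits $\alpha q$ at $0$ and $+\infty$ at $+\infty$, and the resulting surjectivity of $\pi$ onto $(\alpha q, \infty)$ — is a verbatim specialization to $\lambda = 1$ of the corresponding conclusion in the preceding Proposition. No independent obstacle arises: the corollary is essentially a restatement of the Proposition in the ratchet case, made cleaner by the fact that the sandwich defining $c$ degenerates to a single envelope.
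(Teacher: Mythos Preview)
Your proposal is correct and matches the paper's approach: the paper states this corollary without proof, leaving it as an immediate specialization of the preceding Proposition to $\lambda=1$. Your verification that the sandwich formula \eqref{eq:lower-bound} collapses to $\bar{c}^x\vee q=\hat{c}(x)\vee q$ (the latter equality using uniqueness of the optimizer together with the observation from Subsection~\ref{subsec:domains} that $\bar{c}$ is also an optimizer when $\lambda=1$) is exactly the missing detail the reader is expected to supply.
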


In Proposition \ref{prop:alternative-sol} in the \hyperref[app:envelope]{Appendix}, we achieve a similar conclusion for the ratchet constraint without relying on Theorem \ref{thm:main-duality}, but under additional assumptions on $U$, by using the Bank--Kauppila envelope process.

\begin{appendix}

\section*{Ratcheting consumption in complete market -- envelope process and alternative solution}\label{app:envelope}

The following lemma is a modification of \cite[Lemma A.1]{BK} for less restrictive assumptions on $U$ and possibly finite time horizon $\hat{T}$. The proof is based on the Representation Theorem of \cite{bank-el-karoui} and follows closely the proof of \cite[Lemma A.1]{BK}. This lemma is used for showing existence and uniqueness of $c\in\mathcal{C}_{\text{inc}}$ satisfying \eqref{eq:env-ratchet} (see the discussion following Corollary \ref{cor:complete-env}), as well as for an alternative solution of the ratchet constraint problem in a complete market given in Proposition \ref{prop:alternative-sol} below.

In a complete market, $\mathcal{Z}=\left\{Z\right\}$ is a singleton and we denote $\tilde{Z}_t:=\int_t^{\hat{T}} Zd\kappa$ for $0\leq t\leq \hat{T}$.

\begin{lem}\label{lem:envelope}
Assume that stochastic clock satisfies Assumption~\ref{ass:clock}, utility satisfies Assumption~\ref{ass:utility}, and $\mathbb{E}\left[\int_0^{\hat{T}}U'(\omega,t,x)d\kappa_t\right]<\infty$ for every $x>0$. Then for every $y>0$ there exists a unique process $c^y\in\mathcal{C}_{\text{inc}}$ such that, $\mathbb{P}$-almost surely,
\begin{equation}\label{eq:envelope2}
^o\left(\int_.^{\hat{T}} U'(c^y) d\kappa\right)_t\leq y{}^o\tilde{Z}_t\quad\text{for all}\quad t\in[0,\hat{T}),\ \text{with equality when } dc^y_t>0.
\end{equation}
The process $c^y$ is finite-valued.
\end{lem}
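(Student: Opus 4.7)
My plan is to invoke the representation theorem of \cite{bank-el-karoui} along the lines of \cite[Lemma A.1]{BK}, with minor adjustments to accommodate the weaker assumptions on $U$ and the possibly finite horizon $\hat{T}$.

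For existence I would take the target process $X_t := y \cdot {}^o\tilde{Z}_t$ and the driver $f(\omega, t, l) := U'(\omega, t, e^l)$; by Assumption~\ref{ass:utility}, $f(\omega, t, \cdot)$ is continuous and strictly decreasing from $+\infty$ at $-\infty$ to $0$ at $+\infty$. Since $\tilde{Z}_t$ is pointwise non-increasing in $t$, $X$ is a non-negative supermartingale vanishing at $\hat{T}$, and it is of class~(D) because $X_t \leq y \cdot \mathbb{E}[\tilde{Z}_0 \,|\, \mathcal{F}_t]$ with $\mathbb{E}[\tilde{Z}_0] \leq A$ by Assumption~\ref{ass:clock} and Proposition~\ref{prop:alpha-for-D}. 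The integrability hypothesis $\mathbb{E}\bigl[\int_0^{\hat{T}} U'(t,x) d\kappa_t\bigr] < \infty$ translates into integrability of $f(\cdot,\cdot,l)$ for every $l \in \mathbb{R}$, precisely the hypothesis needed for Bank--El~Karoui. Applying that theorem produces an optional process $L$ such that, for every stopping time $S \in [0, \hat{T}]$,
\begin{equation*}
X_S \mathbbm{1}_{\{S<\hat{T}\}} = \mathbb{E}\left[\int_S^{\hat{T}} f\bigl(t, \sup_{u \in [S,t)} L_u\bigr) d\kappa_t \,\Big|\, \mathcal{F}_S\right].
\end{equation*}
Setting $c^y_0 := 0$ and $c^y_t := \exp\bigl(\sup_{s \in [0,t)} L_s\bigr)$ for $t \in (0, \hat{T})$, the process $c^y$ lies in $\mathcal{C}_{\text{inc}}$, and the representation at a point of increase $S$ of $c^y$ (where $\sup_{u \in [S,t)} L_u = \sup_{u \in [0,t)} L_u$ on a right-neighborhood of $S$) would give equality, while elsewhere the running supremum over $[S, t)$ is no smaller than over $[0, t)$ so that, by monotonicity of $f$ in $l$, only ``$\leq$'' persists. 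Passing to the pathwise form via the Optional Section Theorem and Lemma~\ref{lem:pt-of-increase} then yields \eqref{eq:envelope2} with equality when $dc^y_t > 0$.

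For finiteness I would argue by contradiction: if $\tau := \inf\{t < \hat{T}: c^y_t = +\infty\} < \hat{T}$ with positive probability, then $\tau$ is a point of increase of $c^y$ (taking the predictable version), so \eqref{eq:envelope2} would hold with equality at $\tau$; yet the left side vanishes by Inada since $c^y_t = +\infty$ on $[\tau, \hat{T})$, while $y\,{}^o\tilde{Z}_\tau > 0$ on $\{\tau < \hat{T}\}$ since $Z > 0$ and $\dot\kappa > 0$. For uniqueness, if $c^1, c^2 \in \mathcal{C}_{\text{inc}}$ both satisfy \eqref{eq:envelope2} and $c^1 \neq c^2$ on a set of positive $\mathbb{P} \otimes d\kappa$-measure, I would set $\sigma$ to be the first time they differ and show, on the event where $c^2$ strictly dominates $c^1$ immediately after $\sigma$, that $\sigma$ must be a point of increase of $c^2$ but not of $c^1$ on a right-neighborhood; combining the equality for $c^2$ at $\sigma$, strict monotonicity $U'(c^1) > U'(c^2)$ on this neighborhood, and the inequality for $c^1$ produces the contradiction.

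The hard part will be checking the ``equality at points of increase'' assertion from the Bank--El~Karoui representation; this is the technical heart of \cite[Lemma A.1]{BK}, and my adjustments (the exponential change of variables used to extend $U'$ to a continuous driver on all of $\mathbb{R}$, and handling the possibly finite horizon) should not affect its applicability provided one verifies that the integrability of $f(\cdot,\cdot,l)$ for all $l \in \mathbb{R}$ and the class~(D)/vanishing-terminal-value structure of $X$ remain available after these modifications.
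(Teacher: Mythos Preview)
Your overall strategy matches the paper's: apply the Bank--El~Karoui representation to the target $y\,{}^o\tilde Z$, read off $c^y$ from the running supremum of the representing level process, and deduce the envelope property \eqref{eq:envelope2}. There are, however, two places where your sketch does not go through as written.

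\textbf{The driver does not meet the Bank--El~Karoui hypotheses.} Your choice $f(\omega,t,l)=U'(\omega,t,e^{l})$ is strictly decreasing but maps $\mathbb{R}$ onto $(0,\infty)$, not onto $(-\infty,\infty)$ as Theorem~3 of \cite{bank-el-karoui} requires. The paper handles this by taking
\[
f(\omega,t,l)=\begin{cases}U'(\omega,t,-1/l),& l<0,\\ -l,& l\geq 0,\end{cases}
\]
which is continuous (by Inada at $x=\infty$), strictly decreasing from $+\infty$ to $-\infty$, and integrable in $\kappa$ for each fixed $l$. One then obtains a representing process $L$ with values in $\mathbb{R}\cup\{-\infty\}$ and must check \emph{a~posteriori} that the artificial branch $l\geq 0$ is never used: the paper shows $L\leq 0$ (otherwise \eqref{eq:repr-formula2} would give ${}^o\tilde Z_S<0$) and that $\tilde L_t:=\sup_{s<t}L_s$ is strictly negative on $(0,\hat T)$ (otherwise a monotone convergence argument along $S^n\uparrow S$ forces $\mathbb{E}[\tilde Z_S]=0$). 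Then $c^y_t=-1/\tilde L_t$ is automatically finite-valued and strictly positive on $(0,\hat T)$, so no separate finiteness argument is needed. Your exponential reparametrisation cannot be fed into the theorem as stated; you would have to extend $f$ artificially and then run the same verification step.

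\textbf{The uniqueness argument is incomplete.} Your contradiction needs $U'(c^1)>U'(c^2)$ on all of $(\sigma,\hat T)$ to compare ${}^o\!\bigl(\int_\sigma^{\hat T}U'(c^i)\,d\kappa\bigr)$ against $y\,{}^o\tilde Z_\sigma$, but you only control this on a right-neighbourhood of $\sigma$; $c^1$ may later overtake $c^2$. The paper avoids this by showing that for each level $l>0$ the stopping time $T_l=\inf\{t:c_t>l\}$ is characterised as the \emph{largest} minimiser of $\mathbb{E}\bigl[\int_T^{\hat T}(yZ-U'(l))\,d\kappa\bigr]$ over stopping times $T$. The envelope property yields $\mathbb{E}[\int_T^{\hat T}(yZ-U'(l))\,d\kappa]\geq \mathbb{E}[\int_T^{\hat T}(U'(c)-U'(l))\,d\kappa]\geq \mathbb{E}[\int_{T_l}^{\hat T}(U'(c)-U'(l))\,d\kappa]$ with equality at $T=T_l$, and any $T$ with $\mathbb{P}(T>T_l)>0$ makes the second inequality strict. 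This pins down $T_l$, hence $c$, uniquely.
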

In the language of \cite{BK}, $^o\left(\int_.^{\hat{T}} U'(c^y) d\kappa\right)$ is called the \textit{envelope process} of $y{}^o\tilde{Z}$, and Lemma \ref{lem:envelope} states its existence and uniqueness under given assumptions on $U$.
\begin{proof}
Let $$f(\omega,t,l):=\begin{cases}U'(\omega,t,-1/l),&\quad l<0,\\
-l,&\quad l\geq0.\end{cases}$$
By the properties of $U$, this mapping satisfies:
\begin{itemize}
\item For every $(\omega,t)\in\Omega\times[0,\hat{T})$, the function $l\mapsto f(\omega,t,l)$ is continuous and strictly decreasing from $+\infty$ to $-\infty$.
\item For all $l\in\mathbb{R}$, $(\omega,t)\mapsto f(\omega,t,l)$ is an optional process with $\mathbb{E}\left[\int_0^{\hat{T}}\vert f(\omega,t,l)\vert d\kappa_t\right]<\infty$.
\end{itemize}
Since $\tilde{Z}$ is a continuous non-increasing jointly measurable process with $Z_{\hat{T}}=0$ and $\mathbb{E}[\tilde{Z}_0]=\alpha<\infty$, the optional projection ${}^o\tilde{Z}$ is a class (D), continuous in expectation, non-negative supermartingale with $\lim_{t\uparrow \hat{T}}{}^o\tilde{Z}_{t}=0$, $\mathbb{P}-$a.s.
By Theorem 3 and Remark 2.1 in \cite{bank-el-karoui} there exists an optional process $L_t$ taking values in $\mathbb{R}\cup\{-\infty\}$ such that for every stopping time $S\leq\hat{T}$,
\begin{equation}\label{eq:repr-formula2}
\mathbb{E}\left[\int_S^{\hat{T}} f(t,\sup_{s\in[S,t)}L_s)d\kappa_t\Big\vert \mathcal{F}_S\right]=y{}^o\tilde{Z}_S.
\end{equation}
The process $L_t$ is non-positive on $[0,\hat{T})$ up to indistinguishability. Assuming otherwise, by the optional section theorem there exists a stopping time $S\leq\hat{T}$ such that $\mathbb{P}(S<\hat{T})>0$ and $L_S>0$ on $\{S<\hat{T}\}$, which implies
$$0\leq y{}^o\tilde{Z}_S\leq \mathbb{E}\left[\int_S^{\hat{T}} f(t,L_S)d\kappa_t\Big\vert \mathcal{F}_S\right]=-L_S\mathbb{E}\left[d\kappa([S,\hat{T}))\Big\vert \mathcal{F}_S\right]<0,$$
a contradiction. Furthermore, almost every path of the non-positive left-continuous process $\tilde{L}_t:=\sup_{s\in[0,t)}L_s$ is strictly negative on $(0,\hat{T})$: otherwise, for the stopping times
$$S:=\inf\{t\in(0,\hat{T}]: \tilde{L}_t=0\}\quad\text{and}\quad S^n:=\inf\{t\in(0,\hat{T}]:\tilde{L}_t>-1/n\},\ n\geq 1,$$
 we have $\mathbb{P}(S<\hat{T})>0$, $S^n\uparrow S$ as $n\uparrow\infty$, $\sup_{s\in[S^n,t)}L_s=\tilde{L}_t$ for every $n\geq 1$, $t>S^n$, and, by monotone convergence,
$$0=\lim_{n\to\infty}\mathbb{E}\left[\int_{0}^{\hat{T}} f(t,\tilde{L}_t)\mathbbm{1}_{[S^n,\hat{T})}d\kappa_t\right]=\lim_{n\to\infty}\mathbb{E}\left[y\tilde{Z}_{S^n}\right]=\mathbb{E}\left[y\tilde{Z}_{S}\right]>0.$$
Hence, the process
$$c^y_t:=\begin{cases}
0,&\quad t=0,\\
-1/\tilde{L}_t,&\quad 0<t<\hat{T},
\end{cases}$$
belongs to $\mathcal{C}_{\text{inc}}$, is finite-valued, and, by \eqref{eq:repr-formula2}, satisfies \eqref{eq:envelope2}.

For uniqueness, assume $c\in\mathcal{C}_{\text{inc}}$ is such that $^o\left(\int_.^{\hat{T}} U'(c) d\kappa\right)_t\leq y{}^o\tilde{Z}_t$ for all $0\leq t<\hat{T}$ with ``=" when $dc_t>0$. For $l>0$, we show below that $T_l:=\inf\{t\geq 0: c_t>l\}$ is the largest stopping time minimizing $\mathbb{E}\left[\int_T^{\hat{T}} (yZ-U'(l))d\kappa\right]$ over all stopping times $T\in[0,\hat{T}]$. This means that the stopping times $T_l$, $l>0$, are uniquely determined, hence, $c$ is uniquely determined, and coincides with $c^y$, up to indistinguishability. Note that $\mathbb{E}\left[\int_0^{\hat{T}}\vert yZ-U'(l)\vert d\kappa\right]<\infty$ due to the assumptions on $\kappa$ and $U'$. For a stopping time $T\in[0,\hat{T}]$, we have
$$\mathbb{E}\left[\int_T^{\hat{T}}(yZ-U'(l)) d\kappa\right]\geq \mathbb{E}\left[\int_T^{\hat{T}}(U'(c)-U'(l)) d\kappa\right]\geq\mathbb{E}\left[\int_{T_l}^{\hat{T}}(U'(c)-U'(l)) d\kappa\right],$$
where the first inequality follows from $^o\left(\int_.^{\hat{T}} U'(c) d\kappa\right)_t\leq y{}^o\tilde{Z}_t$, the second inequality follows from the definition of $T_l$ and monotonicity of $U'$, and equality holds in both for $T=T_l$. Thus, $T_l$ is a solution to the optimal stopping problem under consideration. Moreover, any stopping time $T$ such that $\mathbb{P}(T>T_l)>0$ will yield a strict inequality between the second and third terms, making $T_l$ the largest solution to this optimal stopping problem.
\end{proof}

The solution below for the ratchet constraint in a complete semimartingale market is inspired by the argument of \cite{riedel} for a complete market with pricing kernel generated by a L\'evy process. It turns out that the notion of the envelope process allows for such a generalization of Riedel's result. A similar argument, but again only for the L\'evy market model, can be found in \cite{Watson-Scott} in their use of the Bank--El Karoui Representation Theorem.

We denote $\mathbb{U}(c):=\mathbb{E}\left[\int_0^{\hat{T}} U(t,c_t)d\kappa_t\right]$, the expected utility functional for a consumption plan $c$, and make additional assumptions on the utility $U$:
\begin{assume}\label{ass:utility-additional}
$\mathbb{E}\left[\int_0^{\hat{T}}U^{-}(\omega,t,0)d\kappa_t\right]<\infty$ and $\mathbb{E}\left[\int_0^{\hat{T}}U'(\omega,t,x)d\kappa_t\right]<\infty$ for all $x>0$.
\end{assume}

\begin{prop}\label{prop:alternative-sol}
Let $\kappa$ satisfy Assumption \ref{ass:clock}, $U$ satisfy Assumptions \ref{ass:utility} and \ref{ass:utility-additional}, assume $\mathcal{Z}=\{Z\}$ is a singleton, and let $y>0$, $q\geq 0$. Define the consumption plan $\hat{c}:=c^y\vee q\mathbbm{1}_{(0,\hat{T})}\in\mathcal{C}_\text{inc}$, where $c^y$ is given by Lemma \ref{lem:envelope}, and let $x:=\langle\hat{c},Z\rangle$. If $x<\infty$ and $\mathbb{U}(\hat{c})<\infty$ then $\hat{c}$ is the unique maximizer of $\mathbb{U}$ in $\mathcal{C}^1(x,q)$.
\end{prop}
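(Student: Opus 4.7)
The plan is to adapt the concavity-plus-envelope argument underlying Corollary \ref{cor:complete-env} (which handles $q=0$) so as to accommodate the lower bound $q\geq 0$, by subtracting $q\mathbbm{1}_{(0,\hat T)}$ from every consumption plan and working with the shifted quantities. Using Proposition \ref{prop:C-is-closed} together with strict monotonicity of $U(t,\cdot)$, it suffices to show that for every competitor $c'\in\mathcal{C}_{\text{inc}}$ with $c'_{0+}\geq q$ and $\langle c',Z\rangle\leq x$ one has $\mathbb{U}(c')\leq\mathbb{U}(\hat c)$, with equality forcing $c'=\hat c$ $\mathbb{P}\times d\kappa-$a.e.; strict concavity of $U$ gives
$$\mathbb{U}(c')-\mathbb{U}(\hat c)\leq\mathbb{E}\left[\int_0^{\hat T}U'(t,\hat c_t)(c'_t-\hat c_t)d\kappa_t\right],$$
strict whenever $\{c'\neq\hat c\}$ has positive measure, so the task reduces to proving the right-hand side is non-positive.

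Setting $\tilde c':=c'-q\mathbbm{1}_{(0,\hat T)}$ and $\tilde{\hat c}:=\hat c-q\mathbbm{1}_{(0,\hat T)}=(c^y-q)^+\mathbbm{1}_{(0,\hat T)}$, both lie in $\mathcal{C}_{\text{inc}}$ and satisfy $c'-\hat c=\tilde c'-\tilde{\hat c}$ on $(0,\hat T)$. Applying integration by parts \eqref{eq:IBP} (absorbing $\dot\kappa$ to pass from $dt$ to $d\kappa$) and \eqref{eq:int-optional-proj}, the target reduces to
$$\mathbb{E}\left[\int_0^{\hat T}{}^o\Phi_s\,d\tilde c'_s\right]\leq\mathbb{E}\left[\int_0^{\hat T}{}^o\Phi_s\,d\tilde{\hat c}_s\right],\qquad\text{where}\quad\Phi_s:=\int_s^{\hat T}U'(\hat c)d\kappa.$$
Here ${}^o\Phi$ admits a c\`adl\`ag version (cf.\ Remark \ref{rem:pts-of-increase}) because $\mathbb{E}\bigl[\int_0^{\hat T}U'(\hat c)d\kappa\bigr]<\infty$: when $q>0$, $U'(\hat c)\leq U'(q)$ is integrable by Assumption \ref{ass:utility-additional}, and when $q=0$ the bound $\leq y\alpha$ follows from \eqref{eq:envelope2} at $t=0$.

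The main step, and the one I expect to be the principal obstacle, is promoting the envelope relation \eqref{eq:envelope2} for $c^y$ to the analogous relation for $\hat c$ with the \emph{same} multiplier $y$: $\mathbb{P}-$a.s.,
$${}^o\Phi_s\leq y\,{}^o\tilde Z_s\text{ for all }s\in[0,\hat T),\quad\text{with equality when }d\tilde{\hat c}_s>0.$$
The inequality is immediate from $\hat c\geq c^y$, monotonicity of $U'$, and \eqref{eq:envelope2} for $c^y$. For the equality on $\{d\tilde{\hat c}_s>0\}$, a short case analysis on $c^y_s$ vs.\ $q$ shows that every point of increase $s$ of $\tilde{\hat c}$ satisfies (i) $c^y_t>q$ for all $t\in(s,\hat T)$, whence $\hat c=c^y$ on $(s,\hat T)$ and $\int_s^{\hat T}U'(\hat c)d\kappa=\int_s^{\hat T}U'(c^y)d\kappa$ (as $d\kappa(\{s\})=0$), and (ii) $s$ is itself a point of increase of $c^y$, so that \eqref{eq:envelope2} yields ${}^o\bigl(\int_\cdot^{\hat T}U'(c^y)d\kappa\bigr)_s=y\,{}^o\tilde Z_s$. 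Combining (i) and (ii) produces the equality at such $s$; passage from the pointwise statement (tested via the stopping times $T_l=\inf\{t\in[0,\hat T):\tilde{\hat c}_t>l\}$, which are $\hat T$ or points of increase of $\tilde{\hat c}$) to the $\mathbb{P}\times d\tilde{\hat c}-$a.e.\ version uses Lemma \ref{lem:pt-of-increase} and the c\`adl\`ag property. The delicate point is that the initial jump of $\hat c$ at $t=0$ in the case $c^y_{0+}\leq q$ is \emph{not} a point of increase of $\tilde{\hat c}$, which is precisely why the shift by $q\mathbbm{1}_{(0,\hat T)}$ is the right book-keeping.

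Once the envelope relation for $\hat c$ is in place, \eqref{eq:int-optional-proj} and a reverse integration by parts on the $Z$-side give $\mathbb{E}[\int{}^o\Phi\,d\tilde c']\leq y\langle\tilde c',Z\rangle$ and $\mathbb{E}[\int{}^o\Phi\,d\tilde{\hat c}]=y\langle\tilde{\hat c},Z\rangle$, so their difference is at most $y(\langle c',Z\rangle-\langle\hat c,Z\rangle)\leq 0$ after the common $q\alpha$ terms cancel; uniqueness follows because the concavity inequality is strict unless $c'=\hat c$ $\mathbb{P}\times d\kappa-$a.e.
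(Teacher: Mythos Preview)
Your proposal is correct and follows essentially the same route as the paper: both reduce to the concavity bound, shift by $q$ to land back in $\mathcal{C}_{\text{inc}}$, integrate by parts, and exploit that on $\{d\tilde{\hat c}_s>0\}$ one has $\hat c=c^y$ on $(s,\hat T)$ together with the envelope property \eqref{eq:envelope2} of $c^y$. The only difference is packaging---you phrase the key step as a promoted envelope relation for $\hat c$, whereas the paper keeps $U'(c^y)$ in the chain \eqref{eq:chain} and uses \eqref{eq:envelope2} directly---but the content is identical.
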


\begin{proof}
Let $c\in\mathcal{C}_{\text{inc}}$ be $x$-admissible and such that $c_{0+}\geq q$. By discussion in the last paragraph of Subsection \ref{subsec:domains}, it is enough to show that $\mathbb{U}(\hat{c})\geq \mathbb{U}(c)$ for all such $c$. Denote $\hat{c}'=(\hat{c}-q)\vee 0\in\mathcal{C}_{\text{inc}}$ and $c'=(c-q)\vee 0\in\mathcal{C}_\text{inc}$. By concavity of $U$, $U(\hat{c}_t)-U(c_t)\geq U'(\hat{c}_t)(\hat{c}_t-c_t)$. Integrating this inequality with respect to $\mathbb{P}\times d\kappa$, we obtain
\begin{equation}\label{r-positive}
\mathbb{U}(\hat{c})-\mathbb{U}(c)\geq \mathbb{E}\left[\int_0^{\hat{T}} U'(\hat{c}_t)(\hat{c}_t-c_t)d\kappa_t\right]=\mathbb{E}\left[\int_0^{\hat{T}} \left(\int_s^{\hat{T}} U'(\hat{c}_t)d\kappa_t\right)(d\hat{c}_s-dc_s)\right].
\end{equation}
Since $U(\hat{c}_t)-U(0)\geq U'(\hat{c}_t)\hat{c}_t\geq 0$ and $\mathbb{E}\left[\int_0^{\hat{T}}U^{-}(t,0)d\kappa_t\right]<\infty$, the process $U'(\hat{c}_t)\hat{c}_t$ is $\mathbb{P}\times d\kappa-$integrable. Hence, we can indeed interchange subtraction and taking expectation in \eqref{r-positive}, and the equality in \eqref{r-positive} is justified.

Since $\hat{c}_t\geq c^y_t$ and $U'$ is decreasing, $\int_s^{\hat{T}} U'(c^y_t)d\kappa_t\geq \int_s^{\hat{T}} U'(\hat{c}_t)d\kappa_t$ for all $s\geq 0$. Moreover, for $s\geq 0$ such that $d\hat{c}_s'>0$, we have $\hat{c}_t=c^y_t$ for all $t>s$ and, since $d\kappa$ has no atoms, $\int_s^{\hat{T}} U'(c^y_t)d\kappa_t= \int_s^{\hat{T}} U'(\hat{c}_t)d\kappa_t$. As a consequence,\begin{equation}\label{eq:chain}
\begin{aligned}
\mathbb{E}\left[\int_0^{\hat{T}} \left(\int_s^{\hat{T}} U'(\hat{c}_t)d\kappa_t\right)(d\hat{c}_s-dc_s)\right]&=\mathbb{E}\left[\int_0^{\hat{T}} \left(\int_s^{\hat{T}} U'(\hat{c}_t)d\kappa_t\right)(d\hat{c}'_s-dc'_s)\right]\\
&\geq\mathbb{E}\left[\int_0^{\hat{T}} \left(\int_s^{\hat{T}} U'(c^y_t)d\kappa_t\right)(d\hat{c}'_s-dc'_s)\right]\\
&=\mathbb{E}\left[\int_0^{\hat{T}} {}^o\left(\int_s^{\hat{T}} U'(c^y_t)d\kappa_t\right)(d\hat{c}'_s-dc'_s)\right].
\end{aligned}
\end{equation}
Finally, combining \eqref{r-positive}, \eqref{eq:chain}, using the defining property \eqref{eq:envelope2} of $c^y$ and the fact that $d\hat{c}'_s>0$ implies $dc^y_s>0$, we obtain
\begin{equation*}
\begin{aligned}
\mathbb{U}(\hat{c})-\mathbb{U}(c)&\geq \mathbb{E}\left[\int_0^{\hat{T}} {}^o\left(\int_s^{\hat{T}} U'(c^y_t)d\kappa_t\right)(d\hat{c}'_s-dc'_s)\right]\geq \mathbb{E}\left[\int_0^{\hat{T}} y^\circ\tilde{Z}_s(d\hat{c}'_s-dc'_s)\right]\\
&=\mathbb{E}\left[\int_0^{\hat{T}} y\tilde{Z}_s(d\hat{c}_s-dc_s)\right]=y\left(\langle\hat{c},Z\rangle-\langle c,Z\rangle\right)\geq 0.
\end{aligned}
\end{equation*}
The uniqueness of optimizer, up to $\mathbb{P}\times d\kappa-$nullsets of $\Omega\times[0,\hat{T})$, follows by strict concavity of $U$ and finiteness of $\mathbb{U}(\hat{c})$.
\end{proof}

\end{appendix}

\textbf{Acknowledgments.} I thank my advisor Mihai S\^irbu for the many helpful discussions on topics related to this paper. The comments of the anonymous referees led to major improvements of the paper and are greatly appreciated. I would also like to thank Bahman Angoshtari for several insightful discussions on ratchet and drawdown constraints.

\textbf{Funding.} The research was supported in part by the National Science Foundation under Grant DMS-1908903. Any opinions, findings, and conclusions or recommendations expressed in this material are those of the authors and do not necessarily reflect the views of the National Science Foundation. Part of this research was performed while the author was visiting the Institute for Mathematical and Statistical Innovation (IMSI), which is supported by the National Science Foundation (Grant DMS-1929348).


\footnotesize
\bibliography{references}

\begin{thebibliography}{23}
\providecommand{\natexlab}[1]{#1}
\providecommand{\url}[1]{\texttt{#1}}
\expandafter\ifx\csname urlstyle\endcsname\relax
  \providecommand{\doi}[1]{doi: #1}\else
  \providecommand{\doi}{doi: \begingroup \urlstyle{rm}\Url}\fi

\bibitem[Arun(2012)]{Arun2012}
T.~Arun.
\newblock The {M}erton problem with a drawdown constraint on consumption.
\newblock 2012.
\newblock URL \url{https://arxiv.org/pdf/1210.5205.pdf}.

\bibitem[Bank and El~Karoui(2004)]{bank-el-karoui}
P.~Bank and N.~El~Karoui.
\newblock A stochastic representation theorem with applications to optimization
  and obstacle problems.
\newblock \emph{The Annals of Probability}, 32\penalty0 (1B):\penalty0
  1030--1067, 1 2004.
\newblock \doi{10.1214/aop/1079021471}.
\newblock URL \url{https://doi.org/10.1214/aop/1079021471}.

\bibitem[Bank and Kauppila(2017)]{BK}
P.~Bank and H.~Kauppila.
\newblock Convex duality for stochastic singular control problems.
\newblock \emph{The Annals of Applied Probability}, 27\penalty0 (1):\penalty0
  485--516, 2 2017.
\newblock \doi{10.1214/16-AAP1209}.
\newblock URL \url{https://doi.org/10.1214/16-AAP1209}.

\bibitem[Brannath and Schachermayer(1999)]{bipolar}
W.~Brannath and W.~Schachermayer.
\newblock A bipolar theorem for ${L}_+^0({\Omega},\mathcal{F},\mathbb{P})$.
\newblock In J.~Az{\'e}ma, M.~{\'E}mery, M.~Ledoux, and M.~Yor, editors,
  \emph{S{\'e}minaire de Probabilit{\'e}s XXXIII}, pages 349--354, Berlin,
  Heidelberg, 1999. Springer Berlin Heidelberg.
\newblock ISBN 978-3-540-48407-3.

\bibitem[Chau et~al.(2017)Chau, Cosso, Fontana, and Mostovyi]{mostovyiNUPBR}
H.~N. Chau, A.~Cosso, C.~Fontana, and O.~Mostovyi.
\newblock Optimal investment with intermediate consumption under no unbounded
  profit with bounded risk.
\newblock \emph{Journal of Applied Probability}, 54\penalty0 (3):\penalty0
  710--719, 2017.
\newblock ISSN 00219002.
\newblock URL \url{http://www.jstor.org/stable/45277732}.

\bibitem[Delbaen and Schachermayer(1994)]{delbaen_general_1994}
F.~Delbaen and W.~Schachermayer.
\newblock A general version of the fundamental theorem of asset pricing.
\newblock \emph{Mathematische Annalen}, 300\penalty0 (1):\penalty0 463--520,
  Sept. 1994.
\newblock ISSN 0025-5831, 1432-1807.
\newblock \doi{10.1007/BF01450498}.
\newblock URL \url{http://link.springer.com/10.1007/BF01450498}.

\bibitem[Dellacherie and Meyer(1978)]{dellacherie-meyerA}
C.~Dellacherie and P.-A. Meyer.
\newblock \emph{Probabilities and potential}.
\newblock North-Holland Pub. Co., 1978.
\newblock ISBN 072040701.

\bibitem[Dellacherie and Meyer(1982)]{dellacherie-meyerB}
C.~Dellacherie and P.-A. Meyer.
\newblock \emph{Probabilities and potential B: Theory of Martingales}.
\newblock North-Holland Pub. Co., 1982.
\newblock ISBN 0444865268.

\bibitem[Dybvig(1995)]{dybvig}
P.~H. Dybvig.
\newblock {Dusenberry's Ratcheting of Consumption: Optimal Dynamic Consumption
  and Investment Given Intolerance for any Decline in Standard of Living}.
\newblock \emph{The Review of Economic Studies}, 62\penalty0 (2):\penalty0
  287--313, 04 1995.
\newblock ISSN 0034-6527.
\newblock \doi{10.2307/2297806}.
\newblock URL \url{https://doi.org/10.2307/2297806}.

\bibitem[Hugonnier and Kramkov(2004)]{hug-kramkov}
J.~Hugonnier and D.~Kramkov.
\newblock Optimal investment with random endowments in incomplete markets.
\newblock \emph{The Annals of Applied Probability}, 14\penalty0 (2):\penalty0
  845--864, 5 2004.
\newblock \doi{10.1214/105051604000000134}.
\newblock URL \url{https://doi.org/10.1214/105051604000000134}.

\bibitem[Jeon and Park(2021)]{Jeon:2021aa}
J.~Jeon and K.~Park.
\newblock Portfolio selection with drawdown constraint on consumption: a
  generalization model.
\newblock \emph{Mathematical Methods of Operations Research}, 2021.
\newblock \doi{10.1007/s00186-020-00734-6}.
\newblock URL \url{https://doi.org/10.1007/s00186-020-00734-6}.

\bibitem[Jeon et~al.(2018)Jeon, Koo, and Shin]{Jeon-Koo-Shin2018}
J.~Jeon, H.~K. Koo, and Y.~H. Shin.
\newblock Portfolio selection with consumption ratcheting.
\newblock \emph{Journal of Economic Dynamics and Control}, 92:\penalty0
  153--182, 2018.
\newblock \doi{https://doi.org/10.1016/j.jedc.2018.05.003}.
\newblock URL
  \url{https://www.sciencedirect.com/science/article/pii/S0165188918301489}.

\bibitem[Karatzas and Shreve(1991)]{karatzas-shreve-book}
I.~Karatzas and S.~E. Shreve.
\newblock \emph{Brownian motion and stochastic calculus}.
\newblock Graduate texts in mathematics; 113. Springer-Verlag, New York, 2nd
  edition, 1991.
\newblock ISBN 0387976558.

\bibitem[Karatzas and {\v Z}itkovi{\'c}(2003)]{Karatzas-Zitkovic2003}
I.~Karatzas and G.~{\v Z}itkovi{\'c}.
\newblock Optimal consumption from investment and random endowment in
  incomplete semimartingale markets.
\newblock \emph{The Annals of Probability}, 31\penalty0 (4):\penalty0
  1821--1858, 2003.
\newblock ISSN 00911798.
\newblock URL \url{http://www.jstor.org/stable/3481530}.

\bibitem[Koo et~al.(2012)Koo, Koo, Koo, and Hyun]{Koo:2012aa}
B.~L. Koo, H.~K. Koo, J.~L. Koo, and C.~Hyun.
\newblock A generalization of {D}ybvig's result on portfolio selection with
  intolerance for decline in consumption.
\newblock \emph{Economics Letters}, 117\penalty0 (3):\penalty0 646--649, 2012.
\newblock \doi{https://doi.org/10.1016/j.econlet.2012.08.027}.
\newblock URL
  \url{https://www.sciencedirect.com/science/article/pii/S0165176512004624}.

\bibitem[Kramkov and Schachermayer(1999)]{Kramkov:1999aa}
D.~Kramkov and W.~Schachermayer.
\newblock The asymptotic elasticity of utility functions and optimal investment
  in incomplete markets.
\newblock \emph{The Annals of Applied Probability}, 9\penalty0 (3):\penalty0
  904--950, 8 1999.
\newblock \doi{10.1214/aoap/1029962818}.
\newblock URL \url{https://doi.org/10.1214/aoap/1029962818}.

\bibitem[Kramkov and Schachermayer(2003)]{Kramkov:2003aa}
D.~Kramkov and W.~Schachermayer.
\newblock Necessary and sufficient conditions in the problem of optimal
  investment in incomplete markets.
\newblock \emph{The Annals of Applied Probability}, 13\penalty0 (4):\penalty0
  1504--1516, 11 2003.
\newblock \doi{10.1214/aoap/1069786508}.
\newblock URL \url{https://doi.org/10.1214/aoap/1069786508}.

\bibitem[Mostovyi(2015)]{mostovyi}
O.~Mostovyi.
\newblock Necessary and sufficient conditions in the problem of optimal
  investment with intermediate consumption.
\newblock \emph{Finance and Stochastics}, 19\penalty0 (1):\penalty0 135--159,
  2015.
\newblock \doi{10.1007/s00780-014-0248-5}.
\newblock URL \url{https://doi.org/10.1007/s00780-014-0248-5}.

\bibitem[Riedel(2009)]{riedel}
F.~Riedel.
\newblock Optimal consumption choice with intolerance for declining standard of
  living.
\newblock \emph{Journal of Mathematical Economics}, 45\penalty0 (7):\penalty0
  449--464, 2009.
\newblock ISSN 0304-4068.
\newblock \doi{https://doi.org/10.1016/j.jmateco.2009.03.010}.
\newblock URL
  \url{https://www.sciencedirect.com/science/article/pii/S0304406809000251}.

\bibitem[Rockafellar(1970)]{rockafellar}
R.~Rockafellar.
\newblock Convex analysis.
\newblock In \emph{Princeton Landmarks in Mathematics and Physics}, 1970.

\bibitem[Watson and Scott(2014)]{Watson-Scott}
J.~G. Watson and J.~S. Scott.
\newblock Ratchet consumption over finite and infinite planning horizons.
\newblock \emph{Journal of Mathematical Economics}, 54:\penalty0 84--96, 2014.
\newblock \doi{https://doi.org/10.1016/j.jmateco.2014.09.001}.
\newblock URL
  \url{https://www.sciencedirect.com/science/article/pii/S0304406814001116}.

\bibitem[Yu(2015)]{yu}
X.~Yu.
\newblock Utility maximization with addictive consumption habit formation in
  incomplete semimartingale markets.
\newblock \emph{The Annals of Applied Probability}, 25\penalty0 (3):\penalty0
  1383--1419, 2015.
\newblock URL \url{http://www.jstor.org/stable/24520476}.

\bibitem[Žitković(2005)]{Zitkovic2005}
G.~Žitković.
\newblock {Utility maximization with a stochastic clock and an unbounded random
  endowment}.
\newblock \emph{The Annals of Applied Probability}, 15\penalty0 (1B):\penalty0
  748 -- 777, 2005.
\newblock \doi{10.1214/105051604000000738}.
\newblock URL \url{https://doi.org/10.1214/105051604000000738}.

\end{thebibliography}
\end{document}